\theoremstyle{plain}
\newtheorem{thm}{Theorem}
\newtheorem{proposition}{Proposition}
\newtheorem{lemma}{Lemma}
\newtheorem{remark}{Remark}
\begin{document}

\title{Rare-Event Simulation for Many-Server Queues}
\author{Jose Blanchet and Henry Lam\\\emph{Columbia University and Boston University}}
\date{}
\maketitle

\begin{abstract}
We develop rare-event simulation methodology for the analysis of loss events
in a many-server loss system under quality-driven regime, focusing on the steady-state loss probability (i.e. fraction
of lost customers over arrivals) and the behavior of the whole system
leading to loss events. The analysis of these events requires working with
the full measure-valued process describing the system. This is the first
algorithm that is shown to be asymptotically optimal, in the rare-event
simulation context, under the setting of many-server queues involving a full
measure-valued descriptor.
\end{abstract}
\normalsize

While there is vast literature on rare-event simulation algorithms for
queues with fixed number of servers, few algorithms exist for queueing
systems with many servers. In systems with single or a fixed number of
servers, random walk representations are often used to analyze associated
rare events (see for example Siegmund (1976), Asmussen (1985), Anantharam\
(1988), Sadowsky (1991) and Heidelberger (1995)). The difficulty in these
types of systems arises from the boundary behavior induced by the positivity
constraints inherent to queueing systems. Many-server systems are, in some
sense, less sensitive to boundary behavior (as we shall demonstrate in the
basic development of our ideas) but instead the challenge in their
rare-event analysis lies on the fact that the system description is
typically infinite dimensional (measure-valued). One of the goals of this
paper, broadly speaking, is to propose methodology and techniques that we
believe are applicable to a wide range of rare-event problems involving
many-server systems. In particular, we will demonstrate how measure-valued description is both necessary and useful for efficient simulation. This arises primarily from the intimate relation between the steady-state large deviations behavior and the measure-valued diffusion approximation of many-server systems. As far as we know, the algorithm proposed in this paper is the first provably asymptotically optimal algorithm (in a sense that we will explain shortly) that involves such measure-valued descriptor in the rare-event simulation literature.

In order to illustrate our ideas we focus on the problem of estimating the
steady-state loss probability in many-server loss systems. We
consider a system with general i.i.d. interarrival times and service times
(both under suitable tail conditions). The system has $s$ servers and no
waiting room. If a customer arrives and finds a server empty, he immediately
starts service occupying a server. If the customer finds all the servers
busy, he leaves the system immediately and the system incurs a
\textquotedblleft loss\textquotedblright . The steady-state loss probability
(i.e. the long term proportion of customers that are lost) is rare if the
traffic intensity (arrival rate into the system / total service rate) is less
than one and the number of servers is large. This is precisely the
asymptotic environment that we consider.

Related large deviations and simulation results include the work of Glynn
(1995), who developed large deviations asymptotics for the number-in-system of an infinite-server queue with high arrival rates. Based on this
result, Szechtman and Glynn\ (2002) developed a corresponding rare-event
algorithm for the same quantity of an infinite-server queue, using a sequential tilting scheme that mimics the optimal exponential change of measure. Related
results for first passage time probabilities have also been obtained by
Ridder (2009) in the setting of Markovian queues. Blanchet, Glynn and Lam
(2009) constructed an algorithm for the steady-state loss probability of a
slotted-time $M/G/s$ system with bounded service time. The algorithm in
Blanchet, Glynn and Lam (2009) is the closest in spirit to our methodology
here, but the slotted-time nature, the Markovian structure and the fact that
the service times were bounded were used in a crucial way to avoid the main
technical complications involved in dealing with measure-valued descriptors.

In this paper we focus on the steady-state loss estimation of a fully
continuous $GI/G/s$ system with service times that accommodate most
distributions used in practice, including mixtures of exponentials, Weibull
and lognormal distributions. A key element of our algorithm, in addition to
the use of measure-valued process, is the application of weak convergence
limits by Krichagina and Puhalskii\ (1997) and Pang and Whitt (2009). As we
shall see, the weak convergence results are necessary because via a suitable
extension of regenerative-type simulation\ (see Section 2) the steady-state
loss probability of the system can be transformed to a first passage problem
of the measure-valued process starting from an appropriate set, suitably
chosen by means of such weak convergence analysis. However, unlike
infinite-server system, the capacity constraint ($s$ servers) introduces a
boundary that forces us to work with the sample path and to tract the whole
process history. We will also see that the properties (and especially
\textquotedblleft decay\textquotedblright\ behavior) of the steady-state
measure plays an important role in controlling the efficiency of the
algorithm in the case of unbounded service time. In fact, new logarithmic
asymptotic results of steady-state convergence (in the sense described in\
Section 4) are derived along our way to prove algorithmic efficiency.

Our main methodology to construct an efficient algorithm is based on
importance sampling, which is a variance reduction technique that biases the
probability measure of the system (via a so-called change of measure)\ to
enhance the occurrence of rare event. In order to correct for the bias, a
likelihood ratio is multiplied to the sample output to maintain
unbiasedness. The key to efficiency is then to control the likelihood ratio,
which is typically small, and hence favorable, when the change of measure resembles the
conditional distribution given the occurrence of rare event. Construction of
good changes of measure often draws on associated large deviations theory
(see Asmussen and Glynn (2007), Chapter 6). We will carry out this scheme of
ideas in subsequent sections.

The criterion of efficiency that we will be using is the so-called
asymptotic optimality (or logarithmic efficiency). More concretely, suppose
we want to estimate some probability $\alpha :=\alpha (s)$ that goes to 0 as
$s\nearrow\infty $. For any unbiased estimator $X$ of $\alpha $ (i.e. $%
\alpha =EX$) one must have $EX^{2}\geq (EX)^{2}=\alpha ^{2}$ by Jensen's
inequality. Asymptotic optimality requires that $\alpha ^{2}$ is also an
upper bound of the estimator's variance in terms of exponential decay rate.
In other words,%
\begin{equation*}
\liminf_{s\rightarrow \infty }\frac{\log EX^{2}}{\log \alpha ^{2}}=1.
\end{equation*}%
This implies that the estimator $X$ possesses the optimal exponential decay
rate any unbiased estimator can possibly achieve. See, for example, Bucklew
(2004), Asmussen and Glynn (2007) and Juneja and Shahabuddin (2006) for
further details on asymptotic optimality.

Finally, we emphasize the potential applications of loss estimation in
many-server systems. One prominent example is call center analysis. Customer
support centers, intra-company phone systems and emergency rooms, among
others, typically have fixed system capacity above which calls would be
lost. In many situations losses are rare, yet their implications can be
significant. The most extreme example is perhaps 911 center in which any
call loss can be life-threatening. In view of this, an accurate estimate (at
least to the order of magnitude) of loss probability is often an
indispensable indicator of system performance. While in this paper we focus
on i.i.d. interarrival and service times, under mild modifications, our
methodology can be adapted to different model assumptions such as
Markov-modulation and time inhomogeneity that arise naturally in certain
application environments. As a side tale, a rather surprising and novel
application of the present methodology is in the context of actuarial loss
in insurance and pension funds. In such systems the policyholders (insurance
contract or pension scheme buyers) are the \textquotedblleft customers", and
\textquotedblleft loss" is triggered not by an exceedence of the number of
customers but rather by a cash overflow of the insurer. Under suitable model
assumptions, the latter can be expressed as a functional of the past system
history whereby the measure-valued descriptor becomes valuable. The full
development of this application is presented in Blanchet and Lam (2011).

The organization of the paper is as follows.\ In Section 1 we will indicate our main results and lay out
our $GI/G/s$ model assumptions. In Section 2
we will explain and describe in detail our simulation methodology. Section 3
will focus on the proof of algorithmic efficiency and large deviations
asymptotics, while Section 4 will be devoted to the use of weak convergence
results mentioned earlier for the design of an appropriate recurrent set.
Finally, we will provide numerical results in Section 5, and technical details are left to the appendix.

\bigskip

\section{Main Results and Contributions}

\subsection{Problem Formulation and Main Results}

In this subsection we describe our problem formulation, and discuss our main
results. At a general level, our main contribution in this paper is the
development of methodology for efficient \textit{rare-event analysis of the
steady-state behavior} of many-server systems in a \textit{quality driven
regime}. Our methodology, however, is suitable for transient rare-event
analysis assuming the initial condition of the system is within the
diffusion scale from the fluid limit of the system.

The main idea of our methodology is to first introduce a coupling with the
infinite server queue. Second, take advantage of a suitable ratio
representation for the associated probability of interest for the system in
consideration (in our case a loss system). Third, identify a suitable
regenerative-like set based on available results in the literature on
diffusion approximations for the system in consideration. Finally, identify
a rare-event of interest inside a cycle that is common to both the system in
consideration and the infinite-server system, and that has the same
asymptotics as the probability of interest. It is crucial for the last step
to select the regenerative-like set carefully. We concentrate on loss
probabilities in this paper, but an almost identical (asymptotically
optimal) algorithm can be obtained for the steady-state probability of delay
in a many-server queue under the quality driven regime (when the traffic
intensity is bounded away from 1 as the number of servers and the arrival
rate grow to infinity at the same rate).

Throughout the rest of the paper we concentrate on loss systems and develop
the four elements outlined in the previous paragraph for the evaluation of
steady-state loss probabilities, which are defined as
\begin{equation}
P_{\pi }(\text{loss})=\lim_{T\rightarrow \infty }\frac{\text{number of
losses up to}\ T}{\text{number of arrivals up to}\ T}.
\label{loss probability definition}
\end{equation}%
Kac's formula (see Breiman (1968)) allows to express the loss probability as%
\begin{equation}
P_{\pi }(\text{loss})=\frac{E_{A}N_{A}}{\lambda sE_{A}\tau _{A}},
\label{Kac}
\end{equation}%
where $A$ is a set that is visited by the chain infinitely often. The
expectation $E_{A}[\cdot ]$ denotes the expectation with initial state
distributed according to the steady-state distribution conditioned on being
in $A$. The quantity $N_{A}$ is the number of loss before returning to set $A$, and $\tau
_{A}$ is the time back to $A$. Moreover, $\lambda s$ is the arrival rate (which is assumed to scale linearly with the number of servers $s$; the full discussion of our scaling assumptions will be laid out in the next subsection). For now, let us mention that both $E_AN_A$ and $E_A\tau_A$ are also dependent on the parameter $s$ because of the scaling.

%[[[[[Our first result is on the large deviations behavior of \eqref{Kac}:
%
%\begin{thm}
%The steady-state loss probability \eqref{Kac} is exponentially decaying in $s
%$ with decay rate $I^*$ defined in \eqref{I optimal}. In other words,
%\begin{equation}
%\lim_{s\to\infty}\frac{1}{s}\log P_\pi(\text{loss})=-I^*
%\label{large deviations loss probability}
%\end{equation}
%\label{main large deviations}
%\end{thm}
%
%\bigskip
%
%A key element of this result is the computability of $I^{\ast }$, as shown
%in \eqref{I optimal}. In fact, \eqref{I optimal} comes from the fact that $%
%I^{\ast }$ is the infimum of the rate functions for the probabilities that a
%coupled $GI/G/\infty $ system (defined in Section 1.4 below) hits level $s$
%at different fixed times, assuming the system starts at any point in a
%suitably chosen recurrent set $A$. These individual fixed-time rate
%functions are readily computable with formula given in \eqref{I_t}.
%Moreover, they elicit monotone properties (see Lemma \ref{rate}), which
%leads to the convenient formula for $I^{\ast }$ in \eqref{I optimal}.]]]]]

Note that
\eqref{loss
probability definition} cannot be directly simulated, but formula \eqref{Kac}
provides a basis for regenerative-type simulation (see Asmussen and Glynn
(2007), Chapter 4). After identifying a recurrent set $A$, a straightforward
crude Monte Carlo strategy would be to run the system for a long time from
some initial state, take a record of $N_{A}$ and $\tau _{A}$ every time it
hits $A$, and output the sample means of $N_{A}$ and $\tau _{A}$. This
strategy is valid as long as the running time is long enough to allow for
the system to be close to stationarity. Moreover, this strategy is basically
the same as merely outputting the number of loss events divided by the run
time times $\lambda s$ (excluding the uncompleted last $A$-cycle).

However, recognizing that loss is a rare event (with exponential decay rate in $s$
as we will show as a by-product of our analysis), this method will take an
exponential amount of time in $s$ to get a specified relative error. This is
regardless of the choice of $A$: if $A$ is large, it takes short time to
regenerate i.e. $\tau _{A}$ is small, and consequently the number of losses
reported as the numerator $E_{A}N_{A}$ of \eqref{Kac} is almost always zero;
whereas if $A$ is small, it takes a long time to regenerate. In order to
dramatically speed up the computation time, our strategy is the following.
We choose $A$ to be a \textquotedblleft central limit\textquotedblright\ set
so that $E_{A}\tau _{A}$ is not exponentially large in $s$ (and not
exponentially small either; see Section 2.1). This isolates the rarity of
loss to the numerator $E_{A}N_{A}$. In other words, it is very difficult for
the process to reach overflow in an $A$-cycle. \textit{The key, then, is to
construct an efficient importance sampling scheme to induce overflow and to
estimate the number of losses in each }$A$\textit{-cycle}.

We point out two practical observations using this approach: First, $\tau
_{A}$ and $N_{A}$ can be estimated separately i.e. one can \textquotedblleft
split\textquotedblright\ the process every time it hits $A$:\ one of which
we apply importance sampling to get one sample of $N_{A}$ and is then
discarded, to the other one we apply the original measure to get one sample
of $\tau _{A}$ and also set the initial position for the next $A$-cycle (see
Asmussen and Glynn (2007), Chapter 4). Secondly, to get an estimate of standard
deviation one has to use batch estimates since the samples obtained this way
possess serial correlations (Asmussen and Glynn (2007), Chapter 4). In other
words, one has to divide the simulated chain into several segments of equal
number of time units. Then an estimate of the steady-state loss probability
is computed from each chain segment. These estimates are regarded as
independent samples of loss probability. The details of batch sampling will
be provided in Section 5 when we discuss numerical results.

We summarize our approach as follows:
\bigskip

\noindent\textbf{Algorithm 1}

\begin{enumerate}
\item Choose a recurrent set $A$. Initialize the $GI/G/s$ queue's status as
any point in $A$.

\item Run the queue. Each time the queue hits a point in $A$, say $x$, do
the following: Starting from $x$,

\begin{enumerate}
\item Use importance sampling to sample one $N_A$, the number of loss in a
cycle.

\item Use crude Monte Carlo to sample one $\tau_A$, the return time. The
final position of this queue is taken as the new $x$.
\end{enumerate}

\item Divide the queue into several segments of equal time length. Compute
the estimate of steady-state loss probability using the batch samples.
\end{enumerate}

\bigskip

The main result of this paper is the construction and the asymptotic
optimality proof of an efficient importance sampling scheme together. In
order to show the optimality of the algorithm, on our way, we obtain large
deviations asymptotics for loss probabilities that might be of independent
interest.

\begin{thm}
The estimator using the recurrent set $A$ in \eqref{A} and the importance
sampler given by Algorithm 2 is asymptotically optimal. Moreover, the
steady-state loss probability \eqref{Kac} can be seen to be exponentially
decaying in $s $ with decay rate $I^*$ defined in \eqref{I optimal}. \label{main thm}
\end{thm}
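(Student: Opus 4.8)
The plan is to establish the two assertions together, since the large deviations rate $I^*$ for the loss probability and the second-moment bound for the importance sampling estimator are governed by the same exponential constant. I would organize everything around the ratio representation \eqref{Kac}. Because the recurrent set $A$ of \eqref{A} is chosen on the diffusion (central-limit) scale around the fluid equilibrium, the denominator $\lambda s\,E_A\tau_A$ is bounded above and below by polynomials in $s$: the stationary probability of the central set and the mean return time to it are controlled by the Gaussian-type fluctuation (CLT/diffusion) results of Section 4, and contribute nothing to the exponential rate. Hence $\log P_\pi(\text{loss}) = \log E_AN_A + O(\log s)$, and both claims reduce to (i) identifying the exponential decay rate of $E_AN_A$ and (ii) producing a sampling measure under which the one-cycle estimator has second moment decaying at twice that rate.

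For the rate, I would use the coupling with the infinite-server queue, which is exact up to the first time the number in system reaches $s$: to exponential order, ``a loss occurs in an $A$-cycle'' coincides with the event that the (measure-valued, or merely number-in-system) process of the coupled infinite-server system, started from the diffusion-scale set $A$, up-crosses level $s$ before returning to $A$. This first-passage probability admits a sample-path large deviations analysis --- the cheapest route to overflow follows a particular fluid trajectory, and optimizing over such trajectories produces the rate $I^*$ of \eqref{I optimal}. Since, conditioned on overflow, the remaining number of losses $N_A$ in the cycle is at most polynomial in $s$ (the excursion above $s$ and the relaxation back to the central set occupy an $O(1)$ time horizon at arrival rate $\lambda s$), this gives $\tfrac1s\log E_AN_A \to -I^*$ and therefore $\tfrac1s\log P_\pi(\text{loss})\to -I^*$. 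Both the lower bound (a change-of-measure argument along the optimal fluid path) and the matching upper bound (a union bound over a discretized path space, tracking the measure-valued descriptor through the age and residual processes) must be carried through.

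For the estimator, the one-cycle output has the form $X = L\,N_A$, where $L = dP/d\tilde P$ is the likelihood ratio of the true dynamics relative to the sampling dynamics of Algorithm 2, the latter built to mimic the optimal fluid tilt just identified. Asymptotic optimality amounts to $\limsup_s \tfrac1s\log \tilde E[X^2] \le -2I^*$; writing $\tilde E[X^2] = \tilde E[L^2N_A^2]$ and using that $X=0$ unless overflow occurs, it suffices to show $N_A$ is at most polynomial in $s$ and that on the overflow event $L \le e^{-I^*s+o(s)}$. The latter is the usual dividend of choosing the sampling law to resemble the zero-variance change of measure $P(\,\cdot\,|\,\text{overflow in the cycle})$: along any trajectory reaching overflow on an $O(1)$ time scale the accumulated log-likelihood ratio is at least $I^*s+o(s)$, because the sampler is sequentially tilted toward the dominating fluid path; squaring produces the matching factor $2I^*$.

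The main obstacle --- and the reason the measure-valued descriptor is both unavoidable and delicate --- is controlling the likelihood ratio when service times are unbounded. The change of measure perturbs the interarrival and/or service-time laws, and since a loss depends on the entire in-system configuration (the ages and residuals of all customers currently in service), $L$ accumulates per-customer contributions from an unbounded, growing population with service tails as general as Weibull or lognormal. Bounding these contributions uniformly over the overflow event is precisely where the new logarithmic steady-state convergence estimates of Section 4 enter: one must show that under the sampling measure the configuration at the start of the cycle, and the excess mass accumulated during the excursion, stay in a region where the steady-state measure's decay absorbs the service-tail effects, ruling out that one atypically long residual service time inflates $L$. Once this control is in place, the second-moment bound --- and hence asymptotic optimality --- follows by combining it with the upper and lower large deviations estimates established for the decay rate.
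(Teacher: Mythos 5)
Your skeleton (Kac's formula, Proposition \ref{asymptotic} to neutralize the denominator and reduce everything to the cycle quantity, the coupling with the $GI/G/\infty$ system, a lower bound along the optimal path and a second-moment bound at rate $2I^{\ast}$) is the paper's skeleton, but note one structural difference: the paper never proves a separate large deviations upper bound for $P_{r}(\tau_{s}<\tau_{A})$ by a union bound over discretized paths. It proves only the lower bound (Theorem \ref{lower bound copy(1)}) and the second-moment upper bound (Theorem \ref{upper bound copy(1)}), and closes the loop with Jensen's inequality, $P_{r}(\tau_{s}<\tau_{A})^{2}\leq(E_{r}N_{A})^{2}\leq\tilde{E}_{r}[N_{A}^{2}L^{2}]$, so the decay rate of the loss probability and asymptotic optimality come out simultaneously; the extra upper bound you plan is unnecessary (though not wrong if you could carry it out).

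The genuine gap is the step you dispose of in one sentence: that on the overflow event $L\leq e^{-I^{\ast}s+o(s)}$ pathwise. No such bound holds, and controlling this is what the paper calls the main technicality. With the randomized horizon one bounds $L\leq L_{\lceil\tau_{s}\rceil}/P(\tau=\lceil\tau_{s}\rceil)$, and the exponent of $L_{\lceil\tau_{s}\rceil}$ is $s\psi_{\lceil\tau_{s}\rceil}(\theta_{\lceil\tau_{s}\rceil})-\theta_{\lceil\tau_{s}\rceil}\bar{Q}^{\infty}(\tau_{s},\lceil\tau_{s}\rceil-\tau_{s})$: the tilt is charged back only for customers still present at the grid time $\lceil\tau_{s}\rceil$, not for the $s+1-r(\tau_{s})$ present at $\tau_{s}$, so the "overshoot" $sa_{\lceil\tau_{s}\rceil}+1-\bar{Q}^{\infty}(\tau_{s},\lceil\tau_{s}\rceil-\tau_{s})$ is random and admits no deterministic $o(s)$ bound. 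The paper therefore keeps $\exp\{\theta_{\lceil\tau_{s}\rceil}(\cdot)\}$ inside the expectation (using $\tilde{E}[N_{A}^{2}L^{2}]=E[N_{A}^{2}L]$) and controls it with the two-dimensional point-process argument: conditioning on the arrival times in the trapezoid $G_{k}$, the customers landing in the strip $H_{k}$ are independent Bernoulli thinnings with probability $p_{i}^{k}=O(\delta/\bar{F}(\cdot))$, and a Chernoff bound combined with the uniform G\"{a}rtner--Ellis estimate of Lemma \ref{uniformity} (with the $\rho_{s}$ truncation in the unbounded-service case) yields $e^{-sI_{T+(k-1)\delta}+o(s)}$ per time slab, after which the subexponential moments of $N_{A}$ and $\tau_{A}$ from Proposition \ref{asymptotic} enter only through H\"{o}lder. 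Your proposal attributes the needed control to the steady-state decay estimates of Section 4, but those serve a different purpose (Proposition \ref{asymptotic}); they do not bound the overshoot, and nothing in your sketch does. A secondary omission: trajectories that overflow before time $T$ make $\theta_{t}$ blow up as $t\searrow0$, which is exactly why the algorithm leaves the atom $\tau=T$ untilted and why the choice $\tilde{I}_{T}>2I^{\ast}$ in \eqref{T} and condition \eqref{technical} are imposed; your argument does not address this regime, nor does it justify treating $N_{A}$ as pathwise polynomial rather than through its moments.
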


%\bigskip

An important novel feature of the problem we consider (and our solution) is
that it requires a construction based on full measure-valued processes.
Intuitively, the steady-state loss probability of the $GI/G/s$ system
depends on its loss behavior starting from a \textquotedblleft
normal\textquotedblright\ or \textquotedblleft typical\textquotedblright\
state under stationarity (which comes from a diffusion limit). It turns out
that the loss behavior can vary substantially if one defines this initial
\textquotedblleft normal\textquotedblright\ state only through the system's
queue length (even though loss event is defined only through the queue
length). However, by defining the \textquotedblleft
normal\textquotedblright\ state through the whole description of the system
(which requires a measure) the loss behavior starting from this
measure-valued state is characterized by a natural optimal path in the large
deviations sense, and as a result we can identify the efficient importance
sampling scheme to induce such losses. These observations ultimately
translate to the need of a measure-valued recurrent set $A$ in the
simulation of $E_{A}N_{A}$ in \eqref{Kac}.
%This issue will be reported rigorously in
%Section 2 and 3 when we describe our importance sampling algorithm in
%detail. ]]]

We next point out two further methodological observations. First, our
importance sampling algorithm utilizes the representation of a (coupled) $%
GI/G/\infty $ as a point process.
%This is used for proving the \textquotedblleft
%continuity\textquotedblright\ of likelihood ratio that prevents a blow-up of
%\textquotedblleft overshoot\textquotedblright\ at the first passage time
%(see the proof of Theorem \ref{upper bound copy(1)} in Section 3)\TEXTsymbol{%
%<}---.
This point process representation, we believe, can also be used to prove
results on sample path large deviations for many-server systems; such
development will be reported in Blanchet, Chen and Lam (2012). Secondly, our
algorithm requires essentially the information of the whole sample path of
the system due to a randomization of time horizon, in contrast to the
algorithm proposed in Szechtman and Glynn (2002) for estimating fixed-time
probability.

Finally, the recurrent set $A$, given by \eqref{A}, can be seen to possess
the following properties:

\begin{proposition}
In the $GI/G/s$ system,
\begin{equation}
\lim_{s\rightarrow \infty }\frac{1}{s}\log E_{A}\tau _{A}^{p}=0
\label{tau limit}
\end{equation}
and
\begin{equation}
\limsup_{s\rightarrow \infty }\frac{1}{s}\log E_{A}N_{A}^{p}\leq0
\label{N limit}
\end{equation}
for any $p>0$. \label{asymptotic}
\end{proposition}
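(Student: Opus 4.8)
The two assertions are proved by different arguments, and \eqref{N limit} will be deduced from \eqref{tau limit}. For \eqref{tau limit} the plan is to transport the diffusion (functional central limit) approximations for the coupled $GI/G/\infty$ queue — the Krichagina--Puhalskii (1997) and Pang--Whitt (2009) limits used in Section 4 — to the $GI/G/s$ loss system on a single $A$-cycle, and to conclude that, starting from (the neighbourhood of) the ``central limit'' set $A$ of \eqref{A}, the measure-valued process re-enters $A$ within a fixed time with probability bounded away from $0$, uniformly in $s$; a regeneration argument then upgrades this to geometric tails for $\tau_A$, hence to $p$-th moments bounded in $s$, and a matching lower bound comes from the same weak-convergence picture. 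For \eqref{N limit} the plan is to bound the number of losses in a cycle by the number of arrivals in the cycle and combine \eqref{tau limit} with elementary moment estimates for the renewal arrival process.

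\textbf{The bound on $\tau_A$.} In the quality-driven regime the traffic intensity stays bounded away from $1$, so the capacity constraint is not binding at the fluid scale and the $GI/G/s$ loss system and the $GI/G/\infty$ system fed by the same arrival and service inputs share the same fluid equilibrium and the same diffusion-scale ``typical'' region $A$. Coupling the two systems from a common state $x\in A$, their measure-valued descriptors coincide until the first loss. On loss-free paths $\tau_A$ equals a return time to $A$ for the infinite-server queue, and for that queue the fluid dynamics contract (exponentially in time) toward the equilibrium while the centered and diffusion-scaled measure-valued state converges, by the cited FCLT, to a nondegenerate positive-recurrent Ornstein--Uhlenbeck-type limit; combining these, from any state reachable just after an exit of $A$ — still within the diffusion scale of equilibrium — the process re-enters $A$ within a fixed time $T$ with probability at least some $\delta>0$, uniformly in $s$ (loss-containing cycles are absorbed, a loss within $[0,T]$ having probability at most $e^{-cs}$). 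Since the measure-valued state is Markov, iterating over time-blocks of length $T$ yields $P_A(\tau_A>kT)\le(1-\delta/2)^k$ for $s$ large, whence $\sup_s E_A\tau_A^p<\infty$; this gives $\limsup_s s^{-1}\log E_A\tau_A^p\le0$. For the matching lower bound, $\tau_A$ converges in law (on loss-free paths, which dominate) to the return time $\tau_\infty\in(0,\infty)$ of the limiting diffusion, so for a small fixed $\varepsilon>0$ we get $E_A\tau_A^p\ge\varepsilon^p P_A(\tau_A\ge\varepsilon)\to\varepsilon^p P(\tau_\infty\ge\varepsilon)>0$, hence $\liminf_s s^{-1}\log E_A\tau_A^p\ge0$. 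Together these prove \eqref{tau limit}.

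\textbf{Main obstacle.} The delicate step is the \emph{uniformity} — over the initial state ranging in the measure-valued set $A$ (and the states just outside it) and over $s$ — of the estimate ``re-enter $A$ within time $T$ with probability $\ge\delta$''. The FCLT as usually stated fixes the scaled initial condition, so one needs either a version of the Krichagina--Puhalskii representation (via the associated sequential empirical/Kiefer process) that is continuous in the initial data in the relevant Skorokhod topology, together with relative compactness of the image of $A$ under the diffusion scaling, or else a more hands-on argument combining the functional law of large numbers with an exponential (large-deviation upper) concentration bound to forbid excursions of the scaled process away from the equilibrium neighbourhood on $[0,T]$. For unbounded service times one additionally needs the steady-state residual-service-time measure to have a sufficiently light tail, so that the whole measure-valued state (not merely its queue-length projection) returns to $A$; this is where the decay properties of the steady-state measure enter. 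A secondary point is the continuity of the return-time functional used to pass $\tau_A\Rightarrow\tau_\infty$ (non-stickiness of $\partial A$ for the limiting diffusion). For handling the rare loss-containing cycles the point-process representation of the coupled $GI/G/\infty$ queue is convenient.

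\textbf{The bound on $N_A$.} Let $R(t)$ be the number of arrivals in $[0,t]$, a renewal process of rate $\lambda s$. Every loss is an arrival, so $N_A\le R(\tau_A)$, and it suffices to show $E_A R(\tau_A)^p=e^{o(s)}$. Split on $\{\tau_A\le 1\}$ and $\{\tau_A>1\}$. On the first event $R(\tau_A)\le R(1)$ and $E R(1)^p\le C_p(1+\lambda s)^p$ by standard renewal moment bounds, which is polynomial in $s$. On the second event, summing over the scales $k<\tau_A\le k+1$, bounding $R(\tau_A)\le R(k+1)$, and applying the Cauchy--Schwarz inequality with $E R(k+1)^{2p}\le C_p(1+\lambda(k+1)s)^{2p}$ and Markov's inequality $P_A(\tau_A>k)\le k^{-q}E_A\tau_A^q$ gives a bound $\sum_{k\ge1}C_p\,(\lambda(k+1)s)^p\,k^{-q/2}(E_A\tau_A^q)^{1/2}$; choosing $q>2p+2$ makes the series converge, and the whole expression is a polynomial in $s$ times $(E_A\tau_A^q)^{1/2}$, which is $e^{o(s)}$ by \eqref{tau limit}. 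Hence $\limsup_{s\to\infty}s^{-1}\log E_A N_A^p\le0$, which is \eqref{N limit}. (In fact $E_A N_A^p$ decays exponentially at a rate controlled by $I^*$; that sharper statement is part of the large-deviations content of Theorem \ref{main thm}, but only the inequality \eqref{N limit} is needed here.)
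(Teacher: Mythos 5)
Your treatment of \eqref{N limit} is essentially the paper's: bound $N_A$ by the number of arrivals over the cycle and convert \eqref{tau limit} into a subexponential moment bound via H\"{o}lder/Cauchy--Schwarz and renewal-process moment estimates; that part is fine. The gap is in \eqref{tau limit}, and it is exactly the step you flag as the ``main obstacle'' but then assume: the claim that from any state in (or just outside) $A$ the measure-valued process re-enters $A$ within a \emph{fixed} time $T$ with probability $\delta>0$ uniformly in $s$, yielding geometric tails and $\sup_s E_A\tau_A^p<\infty$. This is stronger than what the paper proves, and it is not clear it is even true for unbounded service times. Membership in $A$ constrains $Q(t,y)$ for \emph{all} $y$, and the band $J(y)$ essentially forces zero customers with residual service time beyond an $s$-dependent threshold; the memory of the initial profile $r(t+y)$ does not contract inside the band after a fixed time (the initial deviation, allowed to be as large as $\sqrt{s}C^{\ast}\xi(t+y)$, added to the new-arrival fluctuation need not fit back inside $\sqrt{s}C^{\ast}\xi(y)$), so one must wait until the old customers have essentially cleared out. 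With $O(s)$ initial customers and unbounded service times this clearing time is of order the maximum of $O(s)$ service times, which grows with $s$ (it is only $o(s^{\epsilon})$ under the hazard-rate assumption \eqref{light-tail assumption}, via Lemma \ref{max}). Accordingly, the paper's argument (Section 4) first dominates $\tau_A$ by a return time $\tau_A'$ of the coupled $GI/G/\infty$ system (Lemma \ref{coupling}), then runs a geometric-trials scheme whose block lengths grow like $o(s^{\epsilon})$ and whose per-block success probability is only $be^{-o(s^{\epsilon})}$ — obtained from the uniform-in-$s$, uniform-in-initial-age band estimate of Lemma \ref{CLT1}, proved through the Pang--Whitt limit and the Borell--TIS inequality — giving $E\tau_A^p\le o(s^{\epsilon})e^{o(s^{\epsilon})}$, which is subexponential but not bounded in $s$. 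Your sketch replaces all of this (the coupling lemma, the clearing-time control, the widened band $\nu(y)=(\lambda\int_y^{\infty}\bar F)^{1/(2+\eta)}$ with the extra $\gamma\int_y^\infty\nu$ term in \eqref{xi}, and the Gaussian-field estimate) with an unproved assertion about exponential fluid contraction and an ``OU-type'' limit, together with an unestablished continuity/weak-convergence statement $\tau_A\Rightarrow\tau_\infty$; the latter is also unnecessary for the lower bound, since by construction returns to $A$ occur only on the $\Delta$-lattice, so $\tau_A\ge\Delta$ deterministically and $\liminf_s s^{-1}\log E_A\tau_A^p\ge 0$ is immediate.

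A secondary point: your claim that a loss within $[0,T]$ has probability at most $e^{-cs}$ from $A$ is itself a large-deviations estimate that is not available at this stage of the argument (and a crude arrival-count bound does not give it for large $T$); the paper avoids needing it for Proposition \ref{asymptotic} precisely by passing to the loss-free infinite-server system through the coupling, which upper-bounds $\tau_A$ pathwise regardless of whether losses occur.
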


Briefly stated, Proposition \ref{asymptotic} stipulates that any moments of
the time length and number of losses of an $A$-cycle are subexponential in $s
$. When $p=1$, it in particular states that the expected time length of a
cycle is subexponential in $s$. As discussed above, this isolates the rarity
of loss to the numerator in \eqref{Kac} and ensures the validity of
Algorithm 1. The result on general $p$ in Proposition \ref{asymptotic} is
also used in the optimality proof of the importance sampling (as will be
seen in Section 3). Interestingly, the proof of Proposition \ref{asymptotic}
requires the use of the Borell-TIS inequality for Gaussian random fields.
The connection to Gaussian random fields arises in the diffusion limit of
the coupled $GI/G/\infty $ queue.

\subsection{Assumptions on Arrivals and Service Time Distribution}

We now state the assumptions of our model, namely a $GI/G/s$ loss system.
There are $s\geq 1$ servers in the system. We assume arrivals follow a
renewal process with rate $\lambda s$ i.e. the interarrival times are i.i.d.
with mean $1/(\lambda s)$. More precisely, we introduce a \textquotedblleft
base\textquotedblright\ arrival system, with $N^{0}(t),t\geq 0$ as its
counting process of the arrivals from time 0 to $t$, and $%
U_{k}^{0},k=0,1,2,\ldots $ as the i.i.d. interarrival times with $%
EU_{k}^{0}=1/\lambda $ (except the first arrival $U_{0}^{0}$, which can be
delayed). We then scale the system so that $N_{s}(t)$ $=N^{0}(st)$ is the
counting process of the $s$-th order system, and $U_{k}=U_{k}^{0}/s,k=0,1,2,%
\ldots $ are the interarrival times. Moreover, we let $A_{k},k=1,2,\ldots $
be the arrival times i.e. $A_{k}=\sum_{i=0}^{k-1}U_{i}$ (note the convention
$U_{k}=A_{k+1}-A_{k}$ and $A_{0}=0$). Note that for convenience we have
suppressed the dependence on $s$ in $U_{k}$ and $A_{k}$.

We assume that $U_{k}$ has exponential moments in a neighborhood of the
origin, and let $\kappa _{s}(\theta )=\log Ee^{\theta U_{k}}$ be the
logarithmic moment generating function of $U_{k}$. It is easy to see that $%
\kappa _{s}(\theta )=\kappa ^{0}(\theta /s)$ where $\kappa ^{0}(\theta
)=\log Ee^{\theta U_{k}^{0}}$ is the logarithmic moment generating function
of the interarrival time in the base system.

Since $\kappa ^{0}(\cdot )$ is increasing, we can let
\begin{equation}
\psi _{N}(\theta )=-\left( \kappa ^{0}\right) ^{-1}(-\theta )
\label{inverse}
\end{equation}%
where $\left( \kappa ^{0}\right) ^{-1}(\cdot )$ is the inverse of $\kappa
^{0}(\cdot )$. Note that $\kappa _{s}^{-1}(\theta )=s\left( \kappa
^{0}\right) ^{-1}(\theta )$. Also, $\psi _{N}(\cdot )$ is increasing and
convex; this is inherited from $\kappa ^{0}(\cdot )$.

Now we impose a few assumptions on $\psi_N(\cdot)$. First, we assume $\text{%
Dom}\ \psi_N\supset\mathbb{R}_+$ (that $\text{Dom}\ \psi_N\supset\mathbb{R}%
_- $ is obvious from the definition of $\psi_N(\cdot)$), and hence $\text{Dom%
}\ \psi_N=\mathbb{R}$. We also assume that $\psi _{N}(\cdot )$ is twice
continuously differentiable on $\mathbb{R}$, strictly convex and steep on
the positive side i.e. $\psi_N^{\prime }(\theta)\nearrow\infty$ as $%
\theta\nearrow\infty$. Thus $\psi _{N}^{\prime }(0)=\lambda $ and $\psi
_{N}^{\prime }\left( \mathbb{R}_+\right) =[\lambda ,\infty )$. Finally, we
insist the technical condition
\begin{equation}
\theta\frac{d}{d\theta}\log\psi_N(\theta)\to\infty  \label{technical}
\end{equation}
as $\theta\nearrow\infty$. This condition is satisfied by many common
interarrival distributions, such as exponential, Gamma, Erlang etc. (Its use
is in Lemma 4 as a regularity condition to prevent the blow-up of likelihood
ratio due to sample paths that hit overflow very early).

Under these assumptions we have for any $0=t_{0}<t_{1}<\cdots <t_{m}<\infty $
and $\theta _{1},\ldots ,\theta _{m}\in \text{Dom}\ \psi _{N}$,
\begin{equation}
\frac{1}{s}\log E\exp \left\{ \sum_{i=1}^{m}\theta
_{i}(N_{s}(t_{i})-N_{s}(t_{i-1}))\right\} \rightarrow \sum_{i=1}^{m}\psi
_{N}(\theta _{i})(t_{i}-t_{i-1})  \label{increment property}
\end{equation}%
as $s\nearrow \infty $. In particular, $\psi _{N}(\cdot )t$ is the so-called%
\textit{\ Gartner-Ellis limit} of\textit{\ }$N_{s}(t)$\textit{\ }for any%
\textit{\ }$t>0$ as $s\nearrow \infty $. See Glynn and Whitt (1991) and
Glynn (1995). In the case of Poisson arrival, for example, the interarrival
times are exponential and we have $\kappa (\theta )=\log (\lambda /(\lambda
-\theta ))$. This gives $\psi _{N}(\theta )=\lambda (e^{\theta }-1)$ and $%
\text{Dom}\ \psi _{N}=\mathbb{R}$.

We now state our assumptions on the service times. Denote $V_{k}$ as the
service time of the $k$-th arriving customer, and let $V_{k},k=1,2,\ldots $
be i.i.d. with distribution function $F(\cdot )$ and tail distribution
function $\bar{F}(\cdot )$. We assume that $F(\cdot )$ has a density $%
f(\cdot )$ that satisfies
\begin{equation}
\lim_{y\rightarrow \infty }yh(y)=\infty  \label{light-tail assumption}
\end{equation}%
where $h(y)=f(y)/\bar{F}(y)$ is the hazard rate function (with the
convention that $h\left( y\right) =\infty $ whenever $\bar{F}(y)=0$). In
particular, (\ref{light-tail assumption}) implies that for any $p>0$ we can
find $a>0$ such that $yh(y)>p$ as long as $y>a$. Hence,
\begin{equation}
\bar{F}(y)=e^{-\int_{0}^{y}h(u)du}\leq c_{1}e^{-\int_{a}^{y}\frac{p}{u}%
du}=\frac{c_{2}}{y^{p}}  \label{light tail}
\end{equation}%
for some $c_{1},c_{2}>0$. In other words, $\bar{F}(\cdot )$ decays faster
than any power law. It is worth pointing out that assumption \eqref{light-tail assumption} covers
Weibull and log-normal service times, which have been observed to be
important models in call center analysis (see e.g. Brown et al (2005)).

Note that service time distribution does not scale with $s$. Hence the
traffic intensity, defined by the ratio of arrival rate to service rate, is $%
\lambda EV$ (we sometimes drop the subscript $k$ of $V_{k}$ for
convenience). We assume that $\lambda EV<1$. This corresponds to a \textit{%
quality-driven regime }and implies that loss is rare. We will see the
importance of this assumption in our derivation of efficiency and large
deviations results in Section 3.

%\bigskip

\subsection{Representation of System Status}

Let $Q(t)$ be the number of customers in the $GI/G/s$ system at time $t$.
More generally, we let $Q(t,y)$ to be the number of customers at time $t$
who have residual service time larger than $y$, where residual service time
at time $t$ for the $k$-th customer is given by $\left( V_{k}+A_{k}-t\right)
^{+}$ (defined for customers that are not lost). We also keep track of the
age process $B(t)=\inf \{t-A_{k}:A_{k}\leq t\}$ i.e. the time elapsed since
the last arrival. We assume right-continuous sample path i.e. customers who
arrive at time $t$ and start service are considered to be in the system at
time $t$, while those who finish their service at time $t$ are outside the
system at time $t$. We also make the assumption that service time is
assigned and known upon arrival of each served customer. While not
necessarily true in practice, this assumption does not alter any output from
a simulation point of view as far as estimation of loss probabilities is
concerned. To insist on a Markov description of the process, we let $%
W_{t}=(Q(t,\cdot ),B(t))\in \mathcal{D[}0,\infty )\times \mathbb{R}_{+}$ as
the state of the process at time $t$. In the case of bounded service time
over $[0,M]$ the state-space is further restricted to $\mathcal{D}%
[0,M]\times \mathbb{R}_{+}$.

%\bigskip

\subsection{A Coupling $GI/G/\infty $ System}

As indicated briefly before, in multiple times in this paper we shall use a $GI/G/\infty $ system that is
naturally coupled with the $GI/G/s$ system under the above assumptions. This
$GI/G/\infty $ system has the same arrival process and service time
distribution as the $GI/G/s$ system but has infinite number of servers and
thus no loss can occur. Furthermore, it labels $s$ of its servers from the
beginning. When customer arrives, he would choose one of the idle labeled
servers in preference to the rest, and only choose unlabeled server if all
the $s$ labeled servers are busy. It is then easy to see that the evolution
of the $GI/G/\infty $ system restricted to the $s$ labeled servers follows
exactly the same dynamic of the $GI/G/s$ system that we are considering. The
purpose of introducing this system is to remove the nonlinear
\textquotedblleft boundary" condition on the queue, hence leading to
tractable analytical results that we can harness, while the coupling
provides a link from this system back to the original $GI/G/s$ system. In
this paper we shall use the superscript ``$\infty$" to denote quantities in
the $GI/G/\infty $ system, so for example $Q^{\infty }\left( t\right) $
denotes the number of customers at time $t$ for the $GI/G/\infty $ system,
and so on.

\bigskip

Throughout the paper we also use overline to denote quantities that exclude
the initial customers. So for example $\bar{Q}^\infty(t,y)$ denotes the
number of customers who arrive after time 0 in the $GI/G/\infty$ system and
are present at time $t$ having residual service time larger than $y$ i.e. $%
\bar{Q}^\infty(t,y)=Q^\infty(t,y)-Q^\infty(0,t+y)$.

\bigskip

\section{Simulation Methodology}

As we have discussed, two key issues in our algorithm are the choice of
recurrent set and the importance sampling algorithm. We will present them
in detail in Section 2.1 and Section 2.2 respectively.

\subsection{Recurrent Set}

First of all, note that one can pick $T=n\Delta $ for some $\Delta >0$ in
the definition of loss probability given by equation (\ref{loss probability
definition}) and send $n\rightarrow \infty $. The introduction of the
lattice of size $\Delta $ is useful to define return times to the set $A$
only at lattice points. So, let us pick a fixed small time interval $\Delta $
(one choice, for example, is say $1/5$ of the mean of service time). We
choose $A$ to be
\begin{equation}
A=\left\{ Q(t,y)\in J(y)\text{\ for all\ }y\in \lbrack 0,\infty ),\ t\in\{0,\Delta ,2\Delta ,\ldots\}\right\}.   \label{A}
\end{equation}%
Here $J(y)$ is the interval
\begin{equation}
J(y)=\left( \lambda s\int_{y}^{\infty }\bar{F}(u)du-\sqrt{s}C^{\ast }\xi
(y),~\lambda s\int_{y}^{\infty }\bar{F}(u)du+\sqrt{s}C^{\ast }\xi (y)\right)
\label{J}
\end{equation}%
for some well chosen constant $C^{\ast }>0$ (discussed in Remark 1 below and in\ Section
4) and
\begin{equation}
\xi (y)=\nu (y)+\gamma \int_{y}^{\infty }\nu (u)du  \label{xi}
\end{equation}%
where
\begin{equation}
\nu (y)=\left( \lambda \int_{y}^{\infty }\bar{F}(u)du\right) ^{1/(2+\eta )}
\label{nu}
\end{equation}%
with any constants $\eta ,\gamma >0$.

%A few comments are in place. First, we define $A$ in a generalized sense
%that it depends on both $Q(t,\cdot )$ and $t$; it is easy to check that
%Kac's formula \eqref{Kac} still holds. We introduce the lattice $\{\Delta
%,2\Delta ,\ldots \}$ to avoid exponentially small (and even zero) $\tau _{A}$%
%. In particular, $\Delta $ will set a lower bound for the regenerative time.
%Next,
The form of $J(y)$ comes from the heavy traffic limit of $GI/G/\infty $
queue. Pang and Whitt (2009) proved the fluid limit $Q^{\infty
}(t,y)/s\rightarrow \lambda \int_{y}^{t+y}\bar{F}(u)du$ a.s. and the
diffusion limit $(Q^{\infty }(t,y)-\lambda s\int_{y}^{t+y}\bar{F}(u)du)/%
\sqrt{s}\Rightarrow R(t,y)$ for some Gaussian process $R(t,y)$ on the state
space $\mathcal{D}[0,\infty )$ with $\text{var}(R(t,y))\rightarrow \lambda
c_{a}^{2}\int_{y}^{\infty }\bar{F}(u)^{2}du+\lambda \int_{y}^{\infty }F(u)%
\bar{F}(u)du$ as $t\rightarrow \infty $, where $c_{a}$ is the coefficient of
variation of the interarrival times. Our recurrent set $A$ is thus a
\textquotedblleft confidence band\textquotedblright\ of the steady state of $%
Q^{\infty }(t,y)$, with the width of the confidence band decaying slower
than the standard deviation of $Q^{\infty }(\infty ,\cdot )$. It can be
proved (see Proposition \ref{asymptotic}) that this choice of $A$ indeed
leads to a return time\ that is subexponential in $s$.  The slower decay
rate of the confidence band width is a technical adjustment to enlarge $A$
so that a subexponential (in $s$) return time for the $GI/G/\infty $ system
is guaranteed. In fact, for the case of bounded service time, it suffices to
set $\eta =0$.

%The first limit of Proposition \ref{asymptotic} (putting $p=1$) in
%particular concludes that it is sufficient to focus on $E_{A}N_{A}$ to
%construct an asymptotically optimal algorithm.

\begin{remark}
The interval $J(y)$ contains a non-negative integer for any value of $y$ if $%
C^*$ is chosen large enough. In fact, observe that the length of $J(y)$ is continuous
and decreasing in $y$, and let
\begin{equation}
l(s)=\sup \left\{ y>0:\sqrt{s}C^{\ast }\xi (y)\geq \frac{1}{2}\right\} .
\label{l}
\end{equation}%
If $y$ is such that the width of $J(y)$ is equal to $1$ (equivalently $y=l(s)
$) we have that the center of $J\left( y\right) $, namely $\lambda
s\int_{y}^{\infty }\bar{F}(u)du$ satisfies
\begin{equation*}
0\leq \lambda s\int_{y}^{\infty }\bar{F}(u)du\leq (\lambda /(C^{\ast
})^{2+\eta })(\sqrt{s}C^{\ast }\xi (y))^{2+\eta }/s^{\eta /2}=(\lambda
/(C^{\ast })^{2+\eta })(1/2)^{2+\eta }/s^{\eta /2}.
\end{equation*}%
The right hand side is less than 1/2 for $(C^{\ast })^{2+\eta}\geq \lambda $ and this
implies that $\{0\}\subset J\left( y\right) $ for $y=l\left( s\right) $.
Now, if $y>l\left( s\right) $, we can ensure that the half-width of $J(y)$,
namely $\sqrt{s}C^{\ast }\xi (y)$, is larger than the center, if $C^*$ is chosen sufficiently large. To see this, note that a sufficient condition is that
$$\lambda s\int_{y}^{\infty }\bar{F}(u)du \leq \sqrt{s}C^*\left(\lambda\int_y^\infty\bar{F}(u)du\right)^{1/(2+\eta)}$$
which is equivalent to
$$s^{1/2}\left( \int_{y}^{\infty }\bar{F}(u)du\right) ^{\left( 1+\eta \right)
/(2+\eta )} \leq C^{\ast }\lambda ^{-(1+\eta )/(2+\eta )}$$
or
$$s^{\left( 1+\eta /2\right) /(1+\eta )}\int_{y}^{\infty }\bar{F}%
(u)du  \leq\left( C^{\ast }\right) ^{\left( 2+\eta \right) /\left(
1+\eta \right) }\lambda ^{-1} $$
Now, choosing $C^{\ast }\geq \max \left( \lambda ,1\right) $, we have, for $y>l(s)$,
$$s^{\left( 1+\eta /2\right) /(1+\eta )} \int_{y}^{\infty }\bar{F}%
(u)du  \leq s^{1+\eta /2}\int_{y}^{\infty }\bar{F}(u)du\leq
1/(C^{\ast })^{2+\eta }(1/2)^{2+\eta }\leq \left( C^{\ast }\right) ^{\left(
2+\eta \right) /\left( 1+\eta \right) }\lambda ^{-1}$$
which gives the required implication. So $\{0\}\subset
J\left( y\right) $ for $y>l\left( s\right) $.
Obviously it includes at least one point when $y<l(s)$ (because the width of
$J\left( y\right) $ is larger than 1). Therefore $J(y)$ always contains a non-negative integer for
any $y\geq 0$, and the recurrent set $A$ is hence well-defined.
% for large enough $s$. When $s$ is small, we can modify any
%empty $J(y)$ to include the two closest integers to the two ends of the
%interval. This obviously does not affect the asymptotic behavior of $J(y)$
%as $s\rightarrow \infty $.
\label{validity}%
\end{remark}

%\bigskip

\begin{remark}
One may ask whether it is possible to define $A$ in a finite-dimensional
fashion, instead of introducing the functional \textquotedblleft confidence
band\textquotedblright\ in \eqref{A}. For example, one may divide the the
domain of $y$ into segments $[y_{i},y_{i+1}),i=0,1,2,\ldots ,r(s)-1$ for
some integer $r(s)$ with $y_{0}=0$ and $y_{r(s)}=\infty $, where the length
of each segment can be dependent on $s$ and non-identical. One then define
the recurrent set as $\{Q(t,\cdot ):Q(t,y_{i})-Q(t,y_{i+1})\in A_{i}\text{\
for\ }i=0,\ldots ,r(s)-1\}$ for some well-defined sets $A_{i}$'s. As we will
see in the arguments in the subsequent sections, the important criteria of a
good recurrent set is: 1) it consists of a significantly large region in the
central limit theorem, so that it is visited often enough, 2) its deviation
from the mean of $Q(t,y)$ is small, in the sense that the distance between
any element in this recurrent set and the mean of the steady-state of $Q(t,y)
$, at every $y\in \lbrack 0,\infty )$, has order $o(s)$. Criterion 2) is
important, otherwise the large deviations of loss starting from two
different elements in the recurrent set can be substantially different. We
want to avoid having to consider several substantially different paths that
can contribute to the loss event in a significant way as having such
variability would complicate the design of the importance sampling estimator.

Keeping criterion 2) in mind, we conclude that it is important to fine-tune
the scale of the segments $[y_{i},y_{i+1})$ to preserve the efficiency of
the algorithm. This suggests that a reasonable description of the recurrent
set would involve a dimension that grows at a suitable rate as $s\rightarrow
\infty $, thereby effectively obtaining a set of the form that we propose.
The functional definition of $A$ in \eqref{A} happens to balance both
criteria 1) and 2).
\end{remark}

\subsection{Simulation Algorithm}

First we shall explain some heuristic in constructing the algorithm. As we
discussed earlier, the choice of $A$ isolates the rarity of steady-state
loss probability to $E_{A}N_{A}$, which in turn is small because of the
difficulty in approaching overflow from $A$. So on an exponential scale, $%
E_{A}N_{A}\approx P_{A}(\tau _{s}<\tau _{A})$, where $P_{A}(\cdot )$ is the
probability measure with initial state distributed as the steady-state
distribution conditional on $A$, and $\tau _{s}=\inf \{t>0:Q(t)>s\}$ is the
first passage time to overflow. Observe that the probability $P_{A}(\tau
_{s}<\tau _{A})$ is identical for $GI/G/s$ and the coupled $GI/G/\infty $
system since the systems are identical before $\tau _{s}$. The key idea is
to leverage our knowledge of the structurally simpler $GI/G/\infty $ system.
In fact, one can show that the greatest contribution to $P_{A}(\tau
_{s}<\tau _{A})$ is the probability $P_{A}(Q^{\infty }(t^{\ast })>s)$ for
some optimal time $t^{\ast }$, whereas the contribution by other times is
exponentially smaller.

In view of this heuristic, one may think that the most efficient importance
sampling scheme is to exponentially tilt the process as if we are interested
in estimating the probability $P_{A}(Q^{\infty }(t^{\ast })>s)$. However,
doing so does not guarantee a small \textquotedblleft
overshoot\textquotedblright\ of the process at $\tau _{s}$. Instead, we
introduce a randomized time horizon following the idea of Blanchet, Glynn
and Lam (2009). The likelihood ratio will then comprise of a mixture of
individual likelihood ratios under different time horizons, and a bound on
the overshoot is attained by looking at the right horizon (namely $\lceil
\tau _{s}\rceil $ as explained in Section 3).

Hence our algorithm will take the following steps. Suppose we start from
some position in $A$. First we sample a randomized time horizon with some
well-chosen distribution. Then we tilt the coupled $GI/G/\infty $ process to
target overflow over this realized time horizon i.e. as if we are estimating
$P_{A}(Q^{\infty }(t)>s)$ for the realized time horizon $t$.\ This involves
sequential tilting of both the arrivals and service times. Once
overflow is hit, we switch back to the $GI/G/s$ system, drop the lost
customers, and change back to the arrival rate and service times under the
original measure to run the $GI/G/s$ system until $A$ is reached. At this
time one sample of $N_{A}$ is recorded together with the likelihood ratio.

The key questions now are:\ 1) the sequential tilting scheme of arrivals and
service times given a realized time horizon 2) the distribution of the
random time 3) likelihood ratio of this mixture scheme. In the following we
will explain these ingredients in detail and then lay out our algorithm. The
proof of efficiency will be deferred to Section 3.

%\bigskip

\subsubsection{Sequential Tilting Scheme}

Denote $P_r(\cdot)$ and $E_r[\cdot]$ as the probability measure and
expectation with initial system status $r$. Suppose we want to estimate $%
P_{r}(Q^{\infty }(t)>s)$ efficiently for a $GI/G/\infty $ system as $%
s\nearrow\infty $, where $r(\cdot )\in J(\cdot ) $ $\subset D[0,\infty )$
(so that $r(y)$ is the number of initial customers still in the system at
time $y$). An important clue is an invocation of Gartner-Ellis Theorem (see
Dembo and Zeitouni (1998)) to obtain large deviations result. Although this
may not give an immediate importance sampling scheme, it can suggest the
type of exponential tilting needed that can be verified to be efficient.
This is proposed by Glynn (1995) and Szechtman and Glynn (2002), which we
briefly recall here.

To be more specific, let us introduce more notations. Let, for any $t>0$,
\begin{equation}
\psi _{t}(\theta ):=\int_{0}^{t}\psi _{N}(\log (e^{\theta }\bar{F}%
(t-u)+F(t-u)))du
\end{equation}%
This is the Gartner-Ellis limit (see for example Dembo and Zeitouni (1998))\
of $\bar{Q}^{\infty }(t)$ since
\begin{equation*}
\frac{1}{s}\log Ee^{\theta \bar{Q}^{\infty }(t)}=\frac{1}{s}\log E\exp
\left\{ \theta \sum_{i=1}^{N_{s}(t)}I(V_{i}>t-A_{i})\right\} \rightarrow
\int_{0}^{t}\psi _{N}(\log (e^{\theta }\bar{F}(t-u)+F(t-u)))du
\end{equation*}%
where $I(\cdot )$ is the indicator function (see Glynn (1995) for a proof.
It uses \eqref{increment property} and the definition of Riemann sum;
alternatively, see Lemma \ref{multivariate psi} in Section 3 as a
generalization of this result). Let us state the following properties of $%
\psi_t(\cdot)$ for later convenience:

\begin{lemma}
$\psi_t(\cdot)$ is defined on $\mathbb{R}$, twice continuously
differentiable, strictly convex and steep. \label{psi properties}
\end{lemma}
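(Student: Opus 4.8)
The plan is to write $\psi_t(\theta)=\int_0^t \psi_N\big(g(\theta,u)\big)\,du$ with $g(\theta,u):=\log\big(e^{\theta}\bar F(t-u)+F(t-u)\big)$, and to transport the assumed regularity of $\psi_N$ through the composition $\psi_N\circ g$ and the integration in $u$. The starting point is that $e^{\theta}\bar F(t-u)+F(t-u)=1+\bar F(t-u)(e^{\theta}-1)\ge\min(e^{\theta},1)>0$ for all $\theta\in\R$, $u\in[0,t]$ (this quantity is the moment generating function of a Bernoulli$(\bar F(t-u))$ variable), so $g(\theta,u)$ is a finite real number and, since $\mathrm{Dom}\,\psi_N=\R$, so is $\psi_N(g(\theta,u))$. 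For fixed $\theta$, as $u$ ranges over $[0,t]$ the value $g(\theta,u)$ stays in the compact interval $[\log\min(e^{\theta},1),\log\max(e^{\theta},1)]$, on which the continuous function $\psi_N$ is bounded; hence the integrand is bounded and $\psi_t(\theta)$ is finite. This gives ``defined on $\R$''.

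For twice continuous differentiability I would differentiate under the integral sign. One computes $\partial_\theta g(\theta,u)=e^{\theta}\bar F(t-u)/\big(e^{\theta}\bar F(t-u)+F(t-u)\big)\in[0,1]$ and $\partial_\theta^2 g(\theta,u)=e^{\theta}\bar F(t-u)F(t-u)/\big(e^{\theta}\bar F(t-u)+F(t-u)\big)^2\in[0,\tfrac14]$ (the mean and variance of the $\theta$-tilted Bernoulli). On any compact $\theta$-interval $K$ the values $g(\theta,u)$ stay in a compact set $G_K$ independent of $u$, on which $\psi_N'$ and $\psi_N''$ (continuous by assumption) are bounded; together with the bounds on $\partial_\theta g$ and $\partial_\theta^2 g$ this yields a $u$-integrable, $\theta\in K$-uniform dominating bound for $\partial_\theta[\psi_N(g)]=\psi_N'(g)\,\partial_\theta g$ and for $\partial_\theta^2[\psi_N(g)]=\psi_N''(g)(\partial_\theta g)^2+\psi_N'(g)\,\partial_\theta^2 g$. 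The standard differentiation-under-the-integral theorem then gives $\psi_t'(\theta)=\int_0^t \psi_N'(g(\theta,u))\,\partial_\theta g(\theta,u)\,du$ and $\psi_t''(\theta)=\int_0^t\big[\psi_N''(g)(\partial_\theta g)^2+\psi_N'(g)\,\partial_\theta^2 g\big]\,du$, and dominated convergence shows both are continuous in $\theta$, so $\psi_t\in C^2(\R)$.

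For strict convexity, use that $\psi_N$ is increasing, so $\psi_N'\ge0$ and $\psi_N'(g)\,\partial_\theta^2 g\ge0$, whence $\psi_t''(\theta)\ge\int_0^t \psi_N''(g(\theta,u))\,(\partial_\theta g(\theta,u))^2\,du$. Since $\bar F$ is continuous with $\bar F(0)=1$, there is $\delta>0$ with $\bar F(v)>0$ on $[0,\delta)$, so on the set $E:=(\max(0,t-\delta),t]$, which has positive Lebesgue measure, we have $\bar F(t-u)>0$, hence $\partial_\theta g(\theta,u)>0$; combined with $\psi_N''>0$ this makes the integrand strictly positive on $E$, so $\psi_t''(\theta)>0$ for every $\theta$. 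For steepness, use the formula for $\psi_t'$ above: the integrand is nonnegative, and for $u\in E$ we have $g(\theta,u)\to\infty$ and $\partial_\theta g(\theta,u)\to1$ as $\theta\to\infty$, so by steepness of $\psi_N$ the integrand tends to $\infty$ pointwise on $E$; Fatou's lemma then gives $\psi_t'(\theta)\to\infty$ as $\theta\to\infty$ (consistent with $\psi_t'$ being nondecreasing by convexity).

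I do not expect a genuine obstacle here; the only points needing care are (i) the domination estimates licensing differentiation under the integral, which are handled by compactness of the $\theta$-range of $g$ and continuity of $\psi_N',\psi_N''$, and (ii) ensuring that the set where the Bernoulli $\bar F(t-u)$ is non-degenerate has positive measure so that strict convexity and steepness survive the integration — exactly what continuity of $\bar F$ together with $\bar F(0)=1$ provides.
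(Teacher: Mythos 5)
Your proof follows essentially the same route as the paper's: finiteness of the integrand, differentiation under the integral sign (the paper invokes joint continuity of the partial derivative and Rudin's Theorem 9.42, you use domination on compact $\theta$-intervals --- these are equivalent here), and steepness from the fact that the integrand of $\psi_t'$ blows up on a positive-measure set of $u$ (the paper uses monotone convergence, you use Fatou; both work since the integrand is nonnegative and, in fact, nondecreasing in $\theta$).

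The one step where you deviate, and where there is a small gap relative to the stated hypotheses, is strict convexity. You drop the term $\psi_N'(g)\,\partial_\theta^2 g$ and conclude $\psi_t''(\theta)>0$ from the claim ``$\psi_N''>0$''. But the paper only assumes $\psi_N$ is $C^2$ and strictly convex, which gives $\psi_N''\ge 0$ with $\psi_N'$ strictly increasing; $\psi_N''$ could vanish at isolated points. This matters exactly in the situation your argument relies on: if $\bar{F}\equiv 1$ on $[0,t]$ (all service times exceed $t$), then $g(\theta,u)\equiv\theta$ on your set $E$ and $\psi_t''(\theta)=t\,\psi_N''(\theta)$, so pointwise positivity of $\psi_t''$ can fail at a $\theta$ where $\psi_N''(\theta)=0$, even though $\psi_t$ is still strictly convex. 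The paper avoids this by showing directly that $\psi_t'$ is strictly increasing, using the strict monotonicity of $\psi_N'$ (together with $\psi_N'\ge 0$ and monotonicity of the tilted-Bernoulli mean $e^\theta\bar{F}/(e^\theta\bar{F}+F)$) inside the formula for $\psi_t'$. Alternatively, you can justify $\psi_N''>0$ from the structural definition $\psi_N(\theta)=-(\kappa^0)^{-1}(-\theta)$, which yields $\psi_N''(\theta)=\kappa^{0\prime\prime}(y)/(\kappa^{0\prime}(y))^3$ with $y=(\kappa^0)^{-1}(-\theta)$, strictly positive once $U^0$ is non-degenerate (and degeneracy is excluded by the strict convexity assumption on $\psi_N$). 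With either repair your argument is complete; everything else in your write-up matches the paper's proof.
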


%\bigskip

Next let $a_{t}=1-\lambda \int_{t}^{\infty }\bar{F}(u)du$. Note that $%
a_{t}s+o(s)$ is the number of customers needed excluding the initial ones to
reach overflow at time $t$. In other words,%
\begin{equation}
P_{r}(Q^{\infty }(t)>s)=P(\bar{Q}^{\infty }(t)>a_{t}s+o(s))  \label{initial}
\end{equation}%
Now denote $\theta _{t}$ as the unique positive solution of the equation $%
\psi _{t}^{\prime }(\theta )=a_{t}$. Such solution exists because $\psi
_{t}(\cdot )$ is steep and that $a_{t}=1-\lambda\int_t^\infty\bar{F}%
(u)du>\lambda \int_{0}^{t}\bar{F}(u)du=\psi _{t}^{\prime }(0)$. Then under
our current assumptions Gartner-Ellis Theorem concludes that $(1/s)\log
P_{r}(Q^{\infty }(t)>s)\rightarrow -I_{t}$ where
\begin{equation}
I_{t}=\sup_{\theta \in \mathbb{R}}\{\theta a_{t}-\lambda _{t}(\theta
)\}=\theta _{t}a_{t}-\psi _{t}\left( \theta _{t}\right)  \label{I_t}
\end{equation}%
$I_{t}$ is the so-called rate function of $\bar{Q}^{\infty }(t)$ evaluated
at $a_{t}$.

At this point let us note the following properties of $\theta_t$ and $I_t$
when regarded as functions of $t$:

\begin{lemma}
$\theta_t$ satisfies the following:

\begin{enumerate}
\item $\theta _{t}>0$ is non-increasing in $t$ for all $t>0$

\item $\lim_{t\rightarrow 0}\theta _{t}=\infty $

\item $\lim_{t\to\infty}\theta_t=\theta_\infty$ where $\theta_\infty$ is the
unique positive root of the equation $\psi_\infty^{\prime }(\theta)=1$, and
\begin{equation}
\psi_\infty(\theta)=\int_0^\infty\psi_N(\log(e^\theta\bar{F}(u)+F(u)))du
\label{psi infinity}
\end{equation}
\end{enumerate}

\label{theta}
\end{lemma}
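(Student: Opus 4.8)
The plan is to work throughout with the change of variable $v=t-u$, under which $\psi_t(\theta)=\int_0^t g(\theta,v)\,dv$ with $g(\theta,v):=\psi_N(\log(e^\theta\bar{F}(v)+F(v)))$. Since each $\psi_t$ is twice continuously differentiable (Lemma~\ref{psi properties}) and $g$ is smooth in $\theta$, I may differentiate under the integral to get $\psi_t'(\theta)=\int_0^t g_\theta(\theta,v)\,dv$, where
$$g_\theta(\theta,v)=\psi_N'\big(\log(e^\theta\bar{F}(v)+F(v))\big)\cdot\frac{e^\theta\bar{F}(v)}{e^\theta\bar{F}(v)+F(v)}.$$
The entire argument rests on one elementary two-sided bound: for $\theta\ge 0$,
$$\lambda\bar{F}(v)\ \le\ g_\theta(\theta,v)\ \le\ \psi_N'(\theta).$$
The lower bound holds because $e^\theta\bar{F}(v)+F(v)\ge 1$ forces $\psi_N'(\log(\cdot))\ge\psi_N'(0)=\lambda$, while $e^\theta F(v)\ge F(v)$ rearranges to $e^\theta\bar{F}(v)/(e^\theta\bar{F}(v)+F(v))\ge\bar{F}(v)$; the upper bound holds because $e^\theta\bar{F}(v)+F(v)\le e^\theta$ and the fraction is at most $1$.

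For part~(1), fix $0<t_1<t_2$ and split the integral for $\psi_{t_2}'(\theta_{t_1})$ at $t_1$. Using $\psi_{t_1}'(\theta_{t_1})=a_{t_1}$ and $a_{t_2}-a_{t_1}=\lambda\int_{t_1}^{t_2}\bar{F}(u)\,du$, the lower bound gives
$$\psi_{t_2}'(\theta_{t_1})=a_{t_1}+\int_{t_1}^{t_2}g_\theta(\theta_{t_1},v)\,dv\ \ge\ a_{t_1}+\lambda\int_{t_1}^{t_2}\bar{F}(v)\,dv=a_{t_2}=\psi_{t_2}'(\theta_{t_2}).$$
Since $\psi_{t_2}$ is strictly convex, $\psi_{t_2}'$ is strictly increasing, so $\theta_{t_1}\ge\theta_{t_2}$; positivity of $\theta_t$ is part of its definition, giving~(1). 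For part~(2), the upper bound gives $a_t=\psi_t'(\theta_t)\le t\,\psi_N'(\theta_t)$, hence $\psi_N'(\theta_t)\ge a_t/t$. As $t\downarrow 0$, $a_t\to 1-\lambda\int_0^\infty\bar{F}(u)\,du=1-\lambda EV>0$ by the quality-driven assumption, so $a_t/t\to\infty$; since $\psi_N'$ is finite and $\psi_N'(\theta)\nearrow\infty$ (steepness), this forces $\theta_t\to\infty$.

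For part~(3), by~(1) the limit $L:=\lim_{t\to\infty}\theta_t\in[0,\infty)$ exists, and I claim $L=\theta_\infty$. First I would check that $\psi_\infty$ in \eqref{psi infinity} is finite on $\R$ (its integrand is $O(\bar{F}(v))$ as $v\to\infty$ since $\psi_N(0)=0$, and $\int_0^\infty\bar{F}=EV<\infty$) and that $\psi_t(\theta)\nearrow\psi_\infty(\theta)$ as $t\to\infty$ by monotone convergence (the integrand is nonnegative for $\theta\ge 0$). Dominating $g_\theta$ and $g_{\theta\theta}$ by a constant multiple of $\bar{F}(v)$ on compact $\theta$-sets legitimizes two differentiations under the integral, so $\psi_\infty'(\theta)=\int_0^\infty g_\theta(\theta,v)\,dv$ and $\psi_\infty''(\theta)=\int_0^\infty g_{\theta\theta}(\theta,v)\,dv$; a short computation shows $g_{\theta\theta}(\theta,v)\ge 0$ with strict inequality whenever $\bar{F}(v)>0$, so $\psi_\infty''>0$ on $[0,\infty)$ and $\psi_\infty'$ is strictly increasing there. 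Since $\psi_\infty'(0)=\lambda\int_0^\infty\bar{F}(v)\,dv=\lambda EV<1$, the equation $\psi_\infty'(\theta)=1$ has at most one positive root. To produce it, pass to the limit in $\psi_t'(\theta_t)=a_t$: writing $\psi_t'(\theta_t)=\int_0^T g_\theta(\theta_t,v)\,dv+\int_T^t g_\theta(\theta_t,v)\,dv$, the tail is bounded by $\psi_N'(\theta_{t_0})e^{\theta_{t_0}}\int_T^\infty\bar{F}$ uniformly for $t\ge t_0$ (small for $T$ large), while $\int_0^T g_\theta(\theta_t,v)\,dv\to\int_0^T g_\theta(L,v)\,dv$ by dominated convergence since $\theta_t\to L$; letting $T\to\infty$ yields $\psi_\infty'(L)=1$. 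Positivity of $L$ is automatic because $\psi_\infty'(0)<1$, so $L$ is the unique positive root $\theta_\infty$, which proves~(3).

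The main obstacle is the final limit interchange in~(3): as $t\to\infty$ both the integrand's parameter $\theta_t$ and the domain of integration $[0,t]$ vary, and they must be controlled simultaneously. This is handled by the uniform tail estimate together with dominated convergence on a fixed window $[0,T]$, but the estimate hinges on the $O(\bar{F}(v))$-domination of $g_\theta$ being uniform for $\theta$ in a neighborhood of $L$ --- the same domination that justifies writing $\psi_\infty'=\int_0^\infty g_\theta$ at all. Everything else (the two differentiations under the integral, the sign of $g_{\theta\theta}$, and the elementary bound on $g_\theta$) is routine.
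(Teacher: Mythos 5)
Your proof is correct, but it takes a genuinely different route from the paper in parts (1) and (3). For part (1) the paper differentiates the defining relation $\psi_t'(\theta_t)=a_t$ in $t$ via the implicit function theorem and shows $\theta'(t)\le 0$ from the sign of $\lambda\bar{F}(t)-g_\theta(\theta_t,t)$; your splitting of $\psi_{t_2}'(\theta_{t_1})$ at $t_1$ together with the bound $g_\theta(\theta,v)\ge\lambda\bar{F}(v)$ for $\theta\ge 0$ reaches the same conclusion by direct comparison, which is more elementary (no smoothness of $t\mapsto\theta_t$ is needed) and rests on exactly the same key fact the paper exploits, namely that $\theta\mapsto g_\theta(\theta,v)$ is nondecreasing with value $\lambda\bar{F}(v)$ at $\theta=0$. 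Part (2) is essentially the paper's argument: both use $\psi_t'(\theta)\le t\,\psi_N'(\theta)$, $a_t\ge 1-\lambda EV>0$ and steepness of $\psi_N$. For part (3) the paper proves $\psi_t'\nearrow\psi_\infty'$ by monotone convergence and then invokes convergence of inverse functions (citing Billingsley and Resnick), using uniform convergence of $(\psi_t')^{-1}$ on $[\lambda EV,1]$ to get continuous convergence and hence $\theta_t=(\psi_t')^{-1}(a_t)\to(\psi_\infty')^{-1}(1)$; you instead take the monotone limit $L$ of $\theta_t$ guaranteed by part (1) and pass to the limit in $\psi_t'(\theta_t)=a_t$ through a uniform $O(\bar{F})$ tail estimate plus dominated convergence on a fixed window $[0,T]$, identifying $L=\theta_\infty$ by strict monotonicity of $\psi_\infty'$. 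Your version is self-contained and has the additional merit of actually constructing the positive root of $\psi_\infty'(\theta)=1$, whose existence the paper's proof tacitly presupposes when it writes $(\psi_\infty')^{-1}(1)$, while the paper's version is shorter once the cited inverse-convergence results are granted. The only points to state explicitly in a final write-up are the ones you already flag (the $O(\bar{F}(v))$ domination of $g_\theta$ and $g_{\theta\theta}$ on compact $\theta$-sets) and the degenerate case $F(v)=0$ in the strict positivity of $g_{\theta\theta}$, where strictness comes from $\psi_N''(\theta)>0$ rather than from the second term.
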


%\bigskip

\begin{lemma}
$I_t$ satisfies the following:

\begin{enumerate}
\item $I_{t}$ is non-increasing in $t$ for $t>0$.

\item $\lim_{t\to\infty}I_t=\inf_{t>0}I_t=I^*$ where
\begin{equation}
I^*=\theta_\infty-\psi_\infty(\theta_\infty)  \label{I optimal}
\end{equation}

\item If $V$ has bounded support over $[0,M]$, then $I^*=I_t$ for any $t\geq
M$.
\end{enumerate}

\label{rate}
\end{lemma}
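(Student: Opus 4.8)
The plan is to derive all three statements from the definition $I_t = \theta_t a_t - \psi_t(\theta_t)$ together with the monotonicity and limit properties of $\theta_t$ already established in Lemma \ref{theta}, and from the variational characterization $I_t = \sup_\theta\{\theta a_t - \psi_t(\theta)\}$. For part 1, I would use the envelope-theorem trick: since $I_t$ is the value of a maximization problem whose objective at the optimizer is $\theta_t a_t - \psi_t(\theta_t)$, differentiating in $t$ and invoking the first-order condition $\psi_t'(\theta_t)=a_t$ kills the terms involving $d\theta_t/dt$, leaving $\frac{d}{dt}I_t = \theta_t \frac{d a_t}{dt} - \frac{\partial}{\partial t}\psi_t(\theta_t)$. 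Now $\frac{d a_t}{dt} = \lambda\bar F(t) \ge 0$, and one must compute $\frac{\partial}{\partial t}\psi_t(\theta)$ from $\psi_t(\theta)=\int_0^t\psi_N(\log(e^\theta\bar F(t-u)+F(t-u)))\,du$. Changing variables $v=t-u$ gives $\psi_t(\theta)=\int_0^t\psi_N(\log(e^\theta\bar F(v)+F(v)))\,dv$, so $\frac{\partial}{\partial t}\psi_t(\theta)=\psi_N(\log(e^\theta\bar F(t)+F(t)))$. Hence $\frac{d}{dt}I_t = \theta_t\lambda\bar F(t) - \psi_N(\log(e^{\theta_t}\bar F(t)+F(t)))$, and I need this to be $\le 0$. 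Using convexity of $\psi_N$ and $\psi_N(0)=0$ (which follows from $\psi_N$ being a log-mgf limit), together with the bound $\log(e^{\theta_t}\bar F(t)+F(t)) \le \theta_t\bar F(t)$ (by concavity of $\log$, since $e^{\theta_t}\bar F(t)+F(t) = 1 + (e^{\theta_t}-1)\bar F(t) \le e^{\theta_t\bar F(t)}$), and monotonicity $\psi_N'(x)\ge\psi_N'(0)=\lambda$ for $x\ge 0$ — wait, that inequality goes the wrong way; instead I would use $\psi_N(x)\ge \psi_N'(0)\,x = \lambda x$ fails too. The correct route: $\psi_N(x) \le \lambda x$ does not hold in general either, so I would instead argue directly that $\psi_N(\log(1+(e^{\theta_t}-1)\bar F(t))) \ge \lambda\log(1+(e^{\theta_t}-1)\bar F(t))$ is false, and reconsider — the cleanest argument is that $I_t$ is a decreasing family because it is an infimum over a nested structure, which I address below.

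A cleaner argument for monotonicity avoids the sign computation: I claim $I_t = \sup_\theta\{\theta a_t - \psi_t(\theta)\}$ is non-increasing because both $a_t$ increases and $\psi_t(\theta)$ increases in $t$ for each fixed $\theta>0$, but these push in opposite directions, so this alone is insufficient — hence the envelope computation really is needed and I would push it through carefully, using the explicit form above. Alternatively, and this is the route I would actually take, I note that the rare event $\{Q^\infty(t)>s\}$ can be realized at any intermediate time in the sense that overflow persists; more precisely the problem $\inf$ defining the steady-state decay is $\inf_{t>0} I_t$, and Lemma \ref{theta}(1) says $\theta_t$ is non-increasing — I would combine $I_t = \theta_t a_t - \psi_t(\theta_t)$ with a direct comparison $I_{t'} \le \theta_t a_{t'} - \psi_{t'}(\theta_t)$ for $t'>t$ (sub-optimality of $\theta_t$ at horizon $t'$) and then show $\theta_t a_{t'} - \psi_{t'}(\theta_t) \le \theta_t a_t - \psi_t(\theta_t)$, i.e. $\theta_t(a_{t'}-a_t) \le \psi_{t'}(\theta_t) - \psi_t(\theta_t)$, which reads $\theta_t\lambda\int_t^{t'}\bar F(u)\,du \le \int_t^{t'}\psi_N(\log(e^{\theta_t}\bar F(u)+F(u)))\,du$. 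This follows pointwise in $u$ from the single inequality $\theta_t\lambda\bar F(u) \le \psi_N(\log(e^{\theta_t}\bar F(u)+F(u)))$, which in turn follows from convexity of $\psi_N$ with $\psi_N(0)=0$: writing $p=\bar F(u)\in[0,1]$ and $x=\log(e^{\theta_t}p+(1-p))$, we have $\psi_N(x) = \psi_N(px/p) \ge$ ... this still needs $x \ge \theta_t p$ (true, shown above) and $\psi_N$ convex increasing with $\psi_N(0)=0$ giving $\psi_N(x)/x \ge \psi_N'(0)$? No — convexity gives $\psi_N(x)/x$ is \emph{increasing}, hence $\psi_N(x) \ge \frac{x}{\theta_t}\psi_N(\theta_t) \ge \frac{x}{\theta_t}\cdot\theta_t\lambda = \lambda x \ge \lambda\theta_t p$ — using $\psi_N(\theta_t)\ge\theta_t\psi_N'(0)=\theta_t\lambda$ by convexity. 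This closes part 1.

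For part 2, the limit $\lim_{t\to\infty} I_t$ exists by monotonicity (part 1) and boundedness below by $0$; to identify it as $I^* = \theta_\infty - \psi_\infty(\theta_\infty)$ I would pass to the limit in $I_t = \theta_t a_t - \psi_t(\theta_t)$ using Lemma \ref{theta}(3) ($\theta_t\to\theta_\infty$), the fact that $a_t = 1-\lambda\int_t^\infty\bar F(u)\,du \to 1$, and dominated/monotone convergence to get $\psi_t(\theta_t)\to\psi_\infty(\theta_\infty)$ (continuity of $\psi_\infty$ and $\psi_t(\theta)\nearrow\psi_\infty(\theta)$ uniformly on compact $\theta$-sets, since the integrand is nonnegative for $\theta\ge 0$). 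That $\inf_{t>0}I_t = \lim_{t\to\infty}I_t$ is immediate from monotonicity. For part 3, when $V$ is supported on $[0,M]$ we have $\bar F(u)=0$ for $u\ge M$, so $a_t = 1$ for all $t\ge M$ and $\psi_t(\theta)=\int_0^M\psi_N(\log(e^\theta\bar F(v)+F(v)))\,dv = \psi_\infty(\theta)$ for $t\ge M$, hence $\theta_t = \theta_\infty$ and $I_t = I^*$ identically for $t\ge M$. The main obstacle is getting the sign/comparison inequality in part 1 right — specifically pinning down the convexity argument $\psi_N(x)\ge \lambda x$ for $x\ge 0$ from $\psi_N$ convex with $\psi_N(0)=0$ and $\psi_N'(0)=\lambda$, and verifying $\log(e^{\theta_t}\bar F(u)+F(u))\le \theta_t\bar F(u)$; everything else is routine limiting arguments.
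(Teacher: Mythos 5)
Your Parts 2 and 3 are fine and essentially the paper's argument ($a_t\to 1$, $\theta_t\to\theta_\infty$ from Lemma \ref{theta}, continuous/uniform-on-compacts convergence of $\psi_t$ to $\psi_\infty$; and for bounded support $a_t=1$, $\psi_t=\psi_\infty$ for $t\ge M$). The problem is Part 1, where neither of your two routes is carried through correctly. In the comparison route, the inequality you write, $I_{t'}\le \theta_t a_{t'}-\psi_{t'}(\theta_t)$, is backwards: since $I_{t'}=\sup_\theta\{\theta a_{t'}-\psi_{t'}(\theta)\}$, plugging in the specific point $\theta_t$ gives a \emph{lower} bound on $I_{t'}$, not an upper bound. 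The correct comparison goes the other way around: evaluate at the optimizer for the \emph{larger} horizon, $I_{t'}=\theta_{t'}a_{t'}-\psi_{t'}(\theta_{t'})\le \theta_{t'}a_t-\psi_t(\theta_{t'})\le I_t$, where the middle step is exactly your pointwise inequality $\theta\lambda\bar F(u)\le \psi_N(\log(e^{\theta}\bar F(u)+F(u)))$ applied with $\theta=\theta_{t'}$ on $u\in(t,t']$ (and one should note $\theta_{t'}>0$ so that restricting to $\theta\ge 0$ is harmless).

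The justification of that pointwise inequality is also garbled. The fact you declare to ``fail,'' namely $\psi_N(x)\ge\psi_N'(0)x=\lambda x$ for $x\ge 0$, is true — it is the tangent-line bound at $0$ for the convex function $\psi_N$ with $\psi_N(0)=0$, $\psi_N'(0)=\lambda$ — and it is precisely what you need: combined with $x=\log(1+(e^{\theta}-1)\bar F(u))\ge\theta\bar F(u)$ (convexity of $p\mapsto e^{\theta p}$; note your first-paragraph claim of the reverse inequality $\le$ is wrong), it gives $\psi_N(x)\ge\lambda x\ge\lambda\theta\bar F(u)$. By contrast, the chain you do write contains the false step $\psi_N(x)\ge\frac{x}{\theta_t}\psi_N(\theta_t)$: since $x\le\theta_t$ and $\psi_N(x)/x$ is nondecreasing, that inequality runs the other way. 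Finally, note that the envelope computation you abandoned is exactly the paper's proof: it derives $\frac{d}{dt}I_t=\theta_t\lambda\bar F(t)-\psi_N(\log(e^{\theta_t}\bar F(t)+F(t)))$ and then observes that $h_t(\theta):=\psi_N(\log(e^{\theta}\bar F(t)+F(t)))$ is convex with $h_t(0)=0$ and $h_t'(0)=\lambda\bar F(t)$, so $h_t(\theta_t)\ge\lambda\bar F(t)\theta_t$ and $\frac{d}{dt}I_t\le 0$. So the ingredients you assembled do close the argument, but as written Part 1 has reversed inequalities at the two decisive steps.
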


%\bigskip

To construct an implementable efficient importance sampling scheme, one can
look at the derivative of $\psi _{t}\left( \theta \right) $:
\begin{equation*}
\psi _{t}^{\prime }(\theta )=\int_{0}^{t}\psi _{N}^{\prime }(\log (e^{\theta
}\bar{F}(t-u)+F(t-u)))\frac{e^{\theta }\bar{F}(t-u)}{e^{\theta }\bar{F}%
(t-u)+F(t-u)}du
\end{equation*}%
which is the mean of $\bar{Q}^{\infty }(t)$ under the exponential change of
measure with parameter $\theta $. When $\theta =0$, $\psi _{t}^{\prime
}(0)=\int_{0}^{t}\psi _{N}^{\prime }(0)\bar{F}(t-u)du=\lambda \int_{0}^{t}%
\bar{F}(t-u)du$. Comparing with $\psi _{t}^{\prime }(\theta _{t})$ suggests
a build-up of the system by accelerating the arrival rate from $\lambda $ to
$\psi _{N}^{\prime }(\log (e^{\theta _{t}}\bar{F}(t-u)+F(t-u)))$ at time $u$
and changing the service time distributions such that the probability for an
arrival at time $u$ to stay in the system at time $t$ is given by $e^{\theta
_{t}}\bar{F}(t-u)/(e^{\theta _{t}}\bar{F}(t-u)+F(t-u))$. Denote $\tilde{P}%
^{t}(\cdot )$ and $\tilde{E}^{t}[\cdot ]$ as the probability measure and
expectation under importance sampling. The above changes can be achieved by
setting an exponential tilting of the $i$-th interarrival time $U_{i}$ by
\begin{eqnarray*}
&&\tilde{P}^{t}(U_{i}\in dy) \\
&=&\exp \{\kappa _{s}^{-1}(-\log (e^{\theta _{t}}\bar{F}%
(t-A_{i})+F(t-A_{i})))y-\kappa _{s}(\kappa _{s}^{-1}(-\log (e^{\theta _{t}}%
\bar{F}(t-A_{i})+F(t-A_{i}))))\}{} \\
&&{}P(U_{i}\in dy) \\
&=&e^{-s\psi _{N}(\log (e^{\theta _{t}}\bar{F}(t-A_{i})+F(t-A_{i})))y}(e^{%
\theta _{t}}\bar{F}(t-A_{i})+F(t-A_{i}))P(U_{i}\in dy)
\end{eqnarray*}%
given the $i$-th arrival time $A_{i}$ (recall the convention $%
U_{i}=A_{i+1}-A_{i}$), and for an arrival at $A_{i}$ its tilted service time
distribution follows
\begin{equation*}
\tilde{P}^{t}(V_{i}\in dy)=\left\{
\begin{array}{ll}
\frac{f(y)}{e^{\theta _{t}}\bar{F}(t-A_{i})+F(t-A_{i})} & \text{\ for\ }%
0\leq y\leq t-A_{i} \\
\frac{e^{\theta _{t}}f(y)}{e^{\theta _{t}}\bar{F}(t-A_{i})+F(t-A_{i})} &
\text{\ for\ }y>t-A_{i}%
\end{array}%
\right.
\end{equation*}%
The contribution to likelihood ratio $P(\cdot )/\tilde{P}^{t}(\cdot )$ by
each arrival and service time assignment is accordingly (using slight abuse
of notation)
\begin{equation}
\frac{P(U_{i})}{\tilde{P}^{t}(U_{i})}=\frac{e^{s\psi _{N}(\log (e^{\theta
_{t}}\bar{F}(t-A_{i})+F(t-A_{i})))U_{i}}}{e^{\theta _{t}}\bar{F}%
(t-A_{i})+F(t-A_{i})}  \label{arrival tilt}
\end{equation}%
and
\begin{equation}
\frac{P(V_{i})}{\tilde{P}^{t}(V_{i})}=\frac{e^{\theta _{t}}\bar{F}%
(t-A_{i})+F(t-A_{i})}{e^{\theta _{t}I(V_{i}>t-A_{i})}}  \label{service tilt}
\end{equation}%
We tilt the process using \eqref{arrival tilt} and \eqref{service tilt}
until the time that we know overflow will happen at time $t$ i.e. $t\wedge
\tau _{s}[t]$ where $\tau _{s}[t]=\inf
\{u>0:r(t)+\sum_{i=1}^{N_{s}(u)}I(V_{i}>t-A_{i})>s\}$. The overall
likelihood ratio on the set $Q^{\infty }(t)>s$ will be

\begin{eqnarray}
L &=&\prod_{i=1}^{N_{s}(\tau _{s}[t])-1}\frac{e^{s\psi _{N}(\log
(e^{\theta_t}\bar{F}(t-A_{i})+F(t-A_{i})))}}{e^{\theta_t}\bar{F}%
(t-A_{i})+F(t-A_{i})}\prod_{i=1}^{N_{s}(\tau _{s}[t])}\frac{e^{\theta_t}\bar{%
F}(t-A_{i})+F(t-A_{i})}{e^{\theta_tI(V_{i}>t-A_{i})}}  \notag \\
&=&\exp \left\{ s\sum_{i=1}^{N_{s}(\tau _{s}[t])-1}\psi _{N}(\log
(e^{\theta_t}\bar{F}(t-A_{i})+F(t-A_{i})))U_{i}-\theta_t\sum_{i=1}^{N_{s}(%
\tau _{s}[t])}I(V_{i}>t-A_{i})\right\} {}  \notag \\
&&{}(e^{\theta_t}\bar{F}(t-A_{\tau _{s}[t]})+F(t-A_{\tau _{s}[t]}))
\label{L interim}
\end{eqnarray}%
This estimator $LI(Q^{\infty }(t)>s)$ can be shown to be asymptotically
optimal in estimating $P_{r}(Q^{\infty }(t)>s)$:

\begin{proposition}
\begin{equation*}
\limsup_{s\to\infty}\frac{1}{s}\log\tilde{E}^t_r[L^2;Q^\infty(t)>s]\leq-2I_t
\end{equation*}
\end{proposition}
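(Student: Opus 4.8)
The plan is to bound the second moment $\tilde{E}^t_r[L^2; Q^\infty(t)>s]$ by substituting the explicit likelihood ratio \eqref{L interim} and splitting off the overshoot. First I would write $L^2 I(Q^\infty(t)>s)$ and change the second-moment expectation back to the original measure, so that $\tilde{E}^t_r[L^2; Q^\infty(t)>s] = E_r[L; Q^\infty(t)>s]$. Since on the event $Q^\infty(t)>s$ we have $\tau_s[t] \le t$ and the bounded factor $(e^{\theta_t}\bar F(t-A_{\tau_s[t]})+F(t-A_{\tau_s[t]}))$ lies in $[1,e^{\theta_t}]$, it suffices to control
\begin{equation*}
E_r\left[\exp\left\{s\sum_{i=1}^{N_s(\tau_s[t])-1}\psi_N(\log(e^{\theta_t}\bar F(t-A_i)+F(t-A_i)))U_i - \theta_t\sum_{i=1}^{N_s(\tau_s[t])}I(V_i>t-A_i)\right\}; Q^\infty(t)>s\right].
\end{equation*}

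Next I would stop the tilting at the true horizon: extend the arrival/service sum from $\tau_s[t]\wedge t$ up to time $t$, absorbing the change into an error term, and use that on the overflow event $\bar Q^\infty(t) = \sum_{i=1}^{N_s(t)} I(V_i>t-A_i) \ge a_t s - o(s)$ by \eqref{initial}. The key identity to exploit is that $s\int_0^t \psi_N(\log(e^{\theta_t}\bar F(t-u)+F(t-u)))\,d(\text{arrival measure})$ is, up to $o(s)$, the Gartner--Ellis limit $s\psi_t(\theta_t)$, while $\theta_t \sum_i I(V_i > t-A_i) \ge \theta_t(a_t s - o(s))$. Hence the exponent is at most $-s(\theta_t a_t - \psi_t(\theta_t)) + o(s) = -s I_t + o(s)$ by \eqref{I_t}, which already gives the bound $-I_t$; to get the factor $2$ one repeats this for $L$ rather than $\sqrt L$, i.e. the correct bookkeeping is that $\tilde E^t_r[L^2;\cdot] = E_r[L;\cdot]$ contributes one factor $-I_t$ and the remaining $I(Q^\infty(t)>s)$ contributes the Chernoff bound $e^{-sI_t+o(s)}$, so that the product is $e^{-2sI_t + o(s)}$. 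Taking $\frac1s\log$ and $\limsup$ yields $\le -2I_t$.

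Concretely I would organize it as: (i) reduce $\tilde E^t_r[L^2;Q^\infty(t)>s]$ to $E_r[L;Q^\infty(t)>s]$; (ii) bound the bounded overshoot factor by $e^{\theta_t}$; (iii) replace the random stopping time $\tau_s[t]$ by the deterministic horizon $t$ in the arrival integral, showing the discrepancy contributes only $o(s)$ in the exponent (this uses that at most one extra interarrival time is being added, times $\psi_N$ of a bounded argument — for this step one may want the control on $U_i$ near the boundary that \eqref{technical} was introduced for, or a direct renewal bound); (iv) on $Q^\infty(t)>s$ apply the lower bound $\sum_i I(V_i>t-A_i)\ge a_ts-o(s)$ and the Gartner--Ellis convergence $\frac1s\log E_r e^{\theta_t \bar Q^\infty(t)} \to \psi_t(\theta_t)$ via a further change of measure / Markov's inequality to extract $e^{-sI_t+o(s)}$ from the event indicator; (v) combine and take logs.

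The main obstacle I expect is step (iii)–(iv): making rigorous the passage from the \emph{sample-path} quantity $s\sum_{i=1}^{N_s(\tau_s[t])-1}\psi_N(\cdots)U_i$ to its deterministic Gartner--Ellis surrogate $s\psi_t(\theta_t)$ uniformly enough that the error is genuinely $o(s)$ on the rare event — one is conditioning on an atypical path, so the naive law-of-large-numbers replacement of the arrival-weighted Riemann sum by its integral is not automatic and must be done under the tilted measure (where, by design, the arrival rate is $\psi_N'(\log(e^{\theta_t}\bar F(t-u)+F(t-u)))$ and the path behaves typically). I would handle this by doing the estimate entirely under $\tilde P^t$: there $\bar Q^\infty(t)$ concentrates around $a_t s$ by the choice of $\theta_t$, the arrival counting process satisfies a functional LLN, and $L$ itself is, up to $e^{o(s)}$ fluctuations, equal to $e^{-sI_t}$ on the relevant event; then $\tilde E^t_r[L^2;Q^\infty(t)>s] \le e^{-sI_t+o(s)}\tilde E^t_r[L;Q^\infty(t)>s] = e^{-sI_t+o(s)} P_r(Q^\infty(t)>s) \le e^{-2sI_t+o(s)}$, the last step by Gartner--Ellis. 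A secondary technical point is justifying the $o(s)$ error from the overshoot factor and from the "$-1$" in the upper limit of the first product uniformly in the realized path, which is where boundedness of $\psi_N(\log(e^{\theta_t}\bar F + F))$ on $[0,t]$ and the moment control on interarrival times enter.
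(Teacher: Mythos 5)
Your reduction $\tilde E^t_r[L^2;Q^\infty(t)>s]=E_r[L;Q^\infty(t)>s]$ and the bound $e^{\theta_t}$ on the boundary factor are fine, but the heart of the argument --- your steps (iii)--(iv) --- has a genuine gap, and it is exactly the point you yourself flag as the main obstacle. No law of large numbers, no concentration under $\tilde P^t$, and no ``uniform $o(s)$ replacement on the rare event'' is needed, and your proposed fix is not sound as sketched: the assertion that $L$ is, up to $e^{o(s)}$ fluctuations, equal to $e^{-sI_t}$ on the relevant event is precisely what has to be proved, and a statement holding with high $\tilde P^t$-probability does not suffice, because the second moment could in principle be dominated by the exceptional paths on which $L$ exceeds $e^{-sI_t}$ by an exponential factor; discarding those paths would require a further exponential Chebyshev estimate that you do not supply. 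Also, condition \eqref{technical} plays no role in this proposition; it is used elsewhere (for paths that overflow very early under the randomized horizon).

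What is missing is that the bound on $L$ is \emph{pathwise and deterministic} on $\{Q^\infty(t)>s\}$. First, since $\theta_t>0$, the map $u\mapsto\psi_N(\log(e^{\theta_t}\bar F(t-u)+F(t-u)))$ is non-decreasing on $[0,t]$, so $\sum_{i=1}^{N_s(\tau_s[t])-1}\psi_N(\log(e^{\theta_t}\bar F(t-A_i)+F(t-A_i)))U_i$ is a left-endpoint Riemann sum of this integrand and is therefore bounded above, path by path, by $\int_0^{\tau_s[t]}\psi_N(\log(e^{\theta_t}\bar F(t-u)+F(t-u)))\,du\le\psi_t(\theta_t)$, using $\tau_s[t]\le t$ on the event; no replacement error ever enters. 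Second, by the very definition of the stopping time $\tau_s[t]$, the service sum $\sum_{i=1}^{N_s(\tau_s[t])}I(V_i>t-A_i)$ equals exactly $s+1-r(t)=a_ts+o(s)$ --- this is how the overshoot is controlled, and it is a different quantity from the $\bar Q^\infty(t)$ you invoke (which counts arrivals up to $t$, not up to $\tau_s[t]$). Combining, $L^2\le e^{2s\psi_t(\theta_t)-2\theta_t(a_ts+o(s))}=e^{-2sI_t+o(s)}$ on the event, and the proposition follows simply from $\tilde P^t(Q^\infty(t)>s)\le1$: the factor $2$ comes from squaring the deterministic bound, with no appeal to Gartner--Ellis for the event probability. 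Your alternative bookkeeping through $E_r[L;\cdot]\le(\sup L)\,P_r(Q^\infty(t)>s)$ together with the Gartner--Ellis upper bound would also work, but only after the pathwise bound on $L$ is in place, which your proposal does not establish.
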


%\bigskip

\begin{proof}
The proof follows from Szechtman and Glynn (2002), but for completeness (and also due to our introduction of $\tau_s[t]$ that simplifies the argument in their paper slightly) we shall present it here.

Note that $\sum_{i=1}^{N_{s}(\tau
_{s}[t])}I(V_{i}>t-A_{i})=s+1-r(t)=a_ts+o(s)$ by the definition of $\tau _{s}[t]$ and $r(t)$. Also,  $e^{\theta_t}\bar{F}(t-A_{\tau _{s}[t]})+F(t-A_{\tau
_{s}[t]})\leq e^{\theta_t}$ since $\theta_t>0$.

Since $\psi_N$ is continuous, $\sum_{i=1}^{N_{s}(\tau _{s}[t])-1}\psi _{N}(\log (e^{\theta_t}%
\bar{F}(t-A_{i})+F(t-A_{i})))U_{i}$ is an approximation to the Riemann
integral $\int_{0}^{\tau _{s}[t]}\psi _{N}(\log (e^{\theta_t}\bar{F}%
(t-u)+F(t-u)))du$, with intervals defined by $0=A_0<A_{1}<A_{2}<\ldots
<A_{N_{s}(\tau _{s}[t])}$ and within each interval the leftmost function
value is used as approximation (with the last interval truncated). Since $\psi _{N}(\log (e^{\theta_t }\bar{F}%
(t-u)+F(t-u)))$ is non-decreasing in $u$ when $\theta_t >0$, and $\tau
_{s}[t]\leq t$ on $Q^\infty(t)>s$, we have%
\begin{eqnarray*}
&&\sum_{i=1}^{N_{s}(\tau _{s}[t])-1}\psi _{N}(\log (e^{\theta_t}\bar{F%
}(t-A_{i})+F(t-A_{i})))U_{i} \\
&\leq &\int_{0}^{\tau _{s}[t]}\psi _{N}(\log (e^{\theta_t}\bar{F}%
(t-u)+F(t-u)))du \\
&\leq &\int_{0}^{t}\psi _{N}(\log (e^{\theta_t}\bar{F}(t-u)+F(t-u)))du
\\
&=&\psi _{t}(\theta_t)
\end{eqnarray*}%
on $Q^{\infty }(t)>s$. Hence \eqref{L interim} gives
\begin{equation*}
L^2\leq e^{2s\psi _{t}(\theta_t)-2\theta_t(a_ts+o(s))}
\end{equation*}%
which yields the proposition.
\end{proof}

%\bigskip

\subsubsection{Distribution of Random Horizon}

Denote $\tau $ as our randomized time horizon. We propose a discrete
power-law distribution for $\tau $ independent of the process:%
\begin{equation}
P(\tau =T+k\delta )=\frac{1}{(k+1)^{2}}-\frac{1}{(k+2)^{2}}\text{\ \ for\ \ }%
k=0,1,2\ldots  \label{tau}
\end{equation}%
where $\delta =\delta (s)=$ $c/s$ for some constant $c>0$. The power-law
distribution of $\tau $ is to avoid exponential contribution from the
mixture probability to the likelihood ratio that may disturb algorithmic
efficiency. Notice that we use a power law of order 2, and in fact we can
choose any power law distribution (with finite mean so that it does not take
long time to generate the process up to $\tau $).

$T$ is a constant to avoid tilting the process on a time horizon too close
to 0, otherwise likelihood ratio would blow up for paths that hit overflow
very early (because of the fact that $\lim_{t\rightarrow 0}\theta
_{t}=\infty $ in Lemma \ref{rate} Part 1; see also Section 3). A good choice of $T$ is the following. Let $\tilde{I}%
_{t}=\sup_{\theta \in \mathbb{R}}\{\theta (1-\lambda EV)-\psi _{N}(\theta
)t\}=\tilde{\theta}_{t}(1-\lambda EV)-\psi _{N}(\tilde{\theta}_{t})t$ where $%
\tilde{\theta}_{t}$ is the solution to the equation $\psi _{N}^{\prime
}(\theta )t=1-\lambda EV$ (which exists by the steepness assumption for
small enough $t$). This is the rate function of $N_{s}(t)$ evaluated at $%
1-\lambda EV$.

We choose $0<T<\infty $ that satisfies%
\begin{equation}
~\tilde{I}_{T}>2I^{\ast }  \label{T}
\end{equation}%
which always exists by the following lemma:

\begin{lemma}
$\tilde{I}_t$ satisfies the following:

\begin{enumerate}
\item $\tilde{I}_t$ is non-increasing in $t$ for $t<\eta$ for some small $%
\eta>0$.

\item $\tilde{I}_t\to\infty$ as $t\searrow0$.
\end{enumerate}

\label{tilde I}
\end{lemma}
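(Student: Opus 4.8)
The plan is to analyze $\tilde{I}_t = \tilde{\theta}_t(1-\lambda EV) - \psi_N(\tilde{\theta}_t)t$ directly through the defining relation $\psi_N'(\tilde{\theta}_t)\,t = 1-\lambda EV$ and the known convexity/steepness properties of $\psi_N$. Write $c_0 := 1-\lambda EV > 0$ (positive by the quality-driven assumption $\lambda EV < 1$). For Part 1, I would first establish that $\tilde{\theta}_t$ is well-defined for all sufficiently small $t$: since $\psi_N$ is strictly convex, twice continuously differentiable, and steep on the positive side with $\psi_N'(0) = \lambda$, the map $\theta \mapsto \psi_N'(\theta)$ is a strictly increasing bijection from $\mathbb{R}_+$ onto $[\lambda, \infty)$, so for any $t$ with $c_0/t > \lambda$ — i.e.\ $t < c_0/\lambda$ — there is a unique positive root $\tilde{\theta}_t$ of $\psi_N'(\theta) t = c_0$, and it is $C^1$ in $t$ by the implicit function theorem (with $\psi_N''(\tilde\theta_t) > 0$). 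Differentiating the identity $\psi_N'(\tilde{\theta}_t)t = c_0$ gives $\psi_N''(\tilde{\theta}_t)\tilde{\theta}_t' t + \psi_N'(\tilde\theta_t) = 0$, hence $\tilde{\theta}_t' = -\psi_N'(\tilde\theta_t)/(t\,\psi_N''(\tilde\theta_t)) < 0$; so $\tilde\theta_t$ is strictly decreasing in $t$.

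For the monotonicity of $\tilde I_t$ itself, I would use the envelope theorem: since $\tilde I_t = \sup_{\theta}\{\theta c_0 - \psi_N(\theta)t\}$ with the supremum attained at $\tilde\theta_t$, we have $\frac{d}{dt}\tilde I_t = -\psi_N(\tilde\theta_t)$. Thus $\tilde I_t$ is non-increasing precisely on the set where $\psi_N(\tilde\theta_t) \ge 0$. Now $\psi_N(0) = \log Ee^{0} = 0$ and $\psi_N$ is increasing (stated in the excerpt), so $\psi_N(\theta) \ge 0$ for all $\theta \ge 0$; since $\tilde\theta_t > 0$ whenever it is defined, $\psi_N(\tilde\theta_t) > 0$, and therefore $\tilde I_t$ is strictly decreasing for all $t < c_0/\lambda$. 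This proves Part 1 with $\eta := c_0/\lambda = (1-\lambda EV)/\lambda$ (any smaller positive $\eta$ works as well).

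For Part 2, I would track the behavior as $t \searrow 0$. As $t \to 0$, the requirement $\psi_N'(\tilde\theta_t) = c_0/t \to \infty$ forces $\tilde\theta_t \to \infty$ by steepness. Then $\tilde I_t = \tilde\theta_t c_0 - \psi_N(\tilde\theta_t)t$; the first term blows up like $c_0\tilde\theta_t \to \infty$, and I must check the second term does not cancel it. Using the technical condition \eqref{technical}, namely $\theta\frac{d}{d\theta}\log\psi_N(\theta) \to \infty$, equivalently $\theta\psi_N'(\theta)/\psi_N(\theta) \to \infty$: substituting $\theta = \tilde\theta_t$ and $\psi_N'(\tilde\theta_t) = c_0/t$ gives $\tilde\theta_t c_0/(t\,\psi_N(\tilde\theta_t)) \to \infty$, i.e.\ $\psi_N(\tilde\theta_t)\,t = o(\tilde\theta_t c_0)$. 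Hence $\tilde I_t = \tilde\theta_t c_0(1 - o(1)) \to \infty$ as $t \searrow 0$.

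The main obstacle is Part 2 — specifically, ruling out a cancellation between the two terms of $\tilde I_t$ as $\theta \to \infty$. Naively $\theta c_0$ and $\psi_N(\theta)t$ could both be of the same order; this is exactly the scenario that the otherwise mysterious technical condition \eqref{technical} is designed to preclude, and the proof hinges on rewriting that condition in terms of $\tilde\theta_t$ via the defining relation $\psi_N'(\tilde\theta_t)t = c_0$. Everything else (well-definedness of $\tilde\theta_t$, its $C^1$ dependence on $t$, the envelope-theorem identity, and the sign of $\psi_N(\tilde\theta_t)$) is routine given the stated hypotheses on $\psi_N$. Finally, the existence of $0 < T < \infty$ with $\tilde I_T > 2I^*$ follows immediately: $2I^* = 2(\theta_\infty - \psi_\infty(\theta_\infty))$ is a finite constant, and by Part 2 we may pick $T$ small enough that $\tilde I_T$ exceeds it.
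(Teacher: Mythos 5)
Your proposal is correct and follows essentially the same route as the paper: Part 1 via the first-order condition $\psi_N'(\tilde\theta_t)t=1-\lambda EV$ giving $\frac{d}{dt}\tilde I_t=-\psi_N(\tilde\theta_t)\le 0$ (the paper obtains this by direct differentiation and cancellation, which is the envelope computation you cite), and Part 2 by sending $\tilde\theta_t\to\infty$ via steepness and invoking the technical condition \eqref{technical}, which the paper uses in the equivalent form $\tilde I_t=(1-\lambda EV)\bigl(\tilde\theta_t-\psi_N(\tilde\theta_t)/\psi_N'(\tilde\theta_t)\bigr)\to\infty$. Your extra details (explicit $\eta=(1-\lambda EV)/\lambda$, well-definedness and $C^1$ dependence of $\tilde\theta_t$, and the remark on choosing $T$) are consistent with and slightly more explicit than the paper's argument.
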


%\bigskip

\begin{remark}
In fact by looking at the arguments in the next section, one can see that $%
\delta$ being merely $o(1)$ leads to asymptotic optimality. However, the
coarser the $\delta$, the larger is the subexponential factor beside the
exponential decay component in the variance, with the
extreme that when $\delta$ is order 1, asymptotic optimality no longer
holds. The choice of $\delta=c/s$ is found to perform well empirically, as
illustrated in Section 6.
\end{remark}

%\bigskip

\subsubsection{Likelihood Ratio}

After sampling the randomized time horizon, we accelerate the process using
the sequential tilting scheme \eqref{arrival tilt} and \eqref{service
tilt} with a realized $\tau =t$. But since we are now interested in the
first passage probability, we tilt the process until $t\wedge \tau
_{s}\wedge \tau _{A}$ (rather than $\tau _{s}[t]$ defined above). If $%
t\wedge \tau _{s}<\tau _{A}$, we continue the $GI/G/s$ system under the
original measure. Also, to prevent a blow-up of likelihood ratio close to $%
t=0$, we use the original measure throughout the whole process whenever $%
\tau =T$ (see the proof of efficiency next section). Now denote $\tilde{E}%
[\cdot ]$ and $\tilde{P}\left( \cdot \right) $ as the importance sampling
measure.\ We have
\begin{equation*}
\tilde{P}(W_{u},0\leq u\leq \tau _{s}\wedge \tau _{A})=\sum_{k=0}^{\infty
}P(\tau =T+k\delta )\tilde{P}^{T+k\delta }(W_{u},0\leq u\leq \tau _{s}\wedge
\tau _{A})
\end{equation*}%
(with $\tilde{P}^{T}(\cdot )=P(\cdot )$). So the overall likelihood ratio $%
L=L(W_{\cdot })$ on the set $\tau _{s}<\tau _{A}$ is given by
\begin{align}
L& =\frac{dP}{d\tilde{P}}=\frac{P(W_{u},0\leq u\leq \tau _{s})}{%
\sum_{k=0}^{\infty }P(\tau =T+k\delta )\tilde{P}^{T+k\delta }(W_{u},0\leq
u\leq \tau _{s})}  \notag \\
& =\frac{1}{\sum_{k=0}^{\infty }P(\tau =T+k\delta )L_{T+k\delta }^{-1}}
\label{L}
\end{align}%
where $L_{t}=L_{t}(W_{\cdot })$ is the individual likelihood ratio as a
sequential product of \eqref{arrival tilt} and \eqref{service tilt} up to $%
t\wedge \tau _{s}$ i.e.%
\begin{equation}
L_t=\left\{
\begin{array}{l}
\exp \left\{ s\sum_{i=1}^{N_{s}(\tau _{s})-1}\psi _{N}(\log (e^{\theta _{t}}%
\bar{F}(t-A_{i})+F(t-A_{i})))U_{i}-\theta _{t}\sum_{i=1}^{N_{s}(\tau
_{s})-1}I(V_{i}>t-A_{i})\right\} \\
\text{\ \ \ \ \ \ \ \ \ \ \ \ \ \ \ \ \ \ \ \ \ \ \ \ \ \ \ \ \ \ \ \ \ \ \
\ \ \ \ \ \ \ \ \ \ \ \ \ \ \ \ \ \ \ \ \ \ \ \ \ \ \ \ \ \ \ \ \ \ \ \ \ \
\ \ \ \ \ \ \ \ \ \ \ for\ }t\geq \tau _{s} \\
\exp \left\{ s\sum_{i=1}^{N_{s}(t)-1}\psi _{N}(\log (e^{\theta _{t}}\bar{F}%
(t-A_{i})+F(t-A_{i})))U_{i}-\theta
_{t}\sum_{i=1}^{N_{s}(t)-1}I(V_{i}>t-A_{i})\right\} \\
\text{\ \ \ \ \ \ \ \ \ \ \ \ \ \ \ \ \ \ \ \ \ \ \ \ \ \ \ \ \ \ \ \ \ \ \
\ \ \ \ \ \ \ \ \ \ \ \ \ \ \ \ \ \ \ \ \ \ \ \ \ \ \ \ \ \ \ \ \ \ \ \ \ \
\ \ \ \ \ \ \ \ \ \ \ for\ }t<\tau _{s}%
\end{array}%
\right.  \label{L_t}
\end{equation}%
for $t>T$ and is 1 for $t=T$.

%\bigskip

\subsubsection{The Algorithm}

We now state our algorithm. Assuming we start from $r(\cdot )\in J(\cdot )$
with a given initial age $B(0)$, do the following: \bigskip

\noindent\textbf{Algorithm 2}

\begin{enumerate}
\item Set $A_{0}=0$. Also initialize $N_{A}\leftarrow 0$, $L\leftarrow 0$
and $\tau _{s}\leftarrow \infty $.

\item Sample $\tau$ according to \eqref{tau}. Say we get a realization $%
\tau=t$.

\item Simulate $U_{0}$ according to the initial age $B(0)$. Set $A_{1}=U_{0}$%
. Check if $\tau _{A}$ is reached, in which case go to Step 7.

\item Starting from $i=1$, repeat the following (setting $\theta _{t}$ as
the one in \eqref{I_t} for $t>T$ and 0 for $t=T$):

\begin{enumerate}
\item Generate $V_{i}$ according to
\begin{equation*}
\tilde{P}^{t}(V_{i}\in dy):=\left\{
\begin{array}{ll}
\frac{f(y)}{e^{\theta _{t}}\bar{F}(t-A_{i})+F(t-A_{i})} & \text{\ for\ }%
0\leq y\leq t-A_{i} \\
\frac{e^{\theta _{t}}f(y)}{e^{\theta _{t}}\bar{F}(t-A_{i})+F(t-A_{i})} &
\text{\ for\ }y>t-A_{i}%
\end{array}%
\right.
\end{equation*}

\item Generate $U_{i}$ according to
\begin{equation*}
\tilde{P}^{t}(U_{i}\in dy):=e^{-s\psi _{N}(\log (e^{\theta _{t}}\bar{F}%
(t-A_{i})+F(t-A_{i})))y}(e^{\theta _{t}}\bar{F}(t-A_{i})+F(t-A_{i}))P(U_{i}%
\in dy)
\end{equation*}

\item Set $A_{i+1}=U_{i}+A_{i}$.

\item If $\tau _{A}$ is reached in $[A_{i},A_{i+1})$, go to Step 7.

\item Compute $Q^{\infty }(A_{i+1})$. If $Q^{\infty }(A_{i+1})>s$ then set $%
\tau _{s}\leftarrow A_{i+1}$, remove the new arrival at $A_{i+1}$, update $%
N_{A}\leftarrow N_{A}+1$, and go to Step 5.

\item If $A_{i+1}\geq t$, go to Step 5.

\item Update $i\leftarrow i+1$.
\end{enumerate}

\item Repeat the following:

\begin{enumerate}
\item Generate $V_{i}$ and $U_{i}$ under the original measure. Set $%
A_{i+1}=U_{i}+A_{i}$.

\item If $\tau _{A}$ is reached in $[A_{i},A_{i+1})$, go to Step 6.

\item Compute $Q(A_{i+1})$. This includes the removal of new arrival $%
A_{i+1} $ from the system in case it is a loss; in such case update $%
N_{A}\leftarrow N_{A}+1$, and set $\tau _{s}\leftarrow A_{i+1}$ if in
addition that $\tau _{s}=\infty $.

\item Update $i\leftarrow i+1$.
\end{enumerate}

\item Compute $LI(\tau _{s}<\tau _{A})$ using \eqref{L} and \eqref{L_t}.

\item Output $N_{A}LI(\tau _{s}<\tau _{A})$.
\end{enumerate}

\bigskip

\section{Algorithmic Efficiency}

In this section we will prove asymptotic optimality of the estimator
outputted by Algorithm 2. To be more precise, we will identify $I^{\ast }$
defined in \eqref{I optimal} as the exponential decay rate of $E_{A}N_{A}$.
The key result is the following:

\begin{thm}
The second moment of the estimator in Algorithm 2 satisfies
\begin{equation*}
\limsup_{s\rightarrow \infty }\frac{1}{s}\log \tilde{E}_{r}[N_{A}^{2}L^{2};%
\tau _{s}<\tau _{A}]\leq -2I^{\ast }
\end{equation*}%
for any $r(\cdot )\in J(\cdot )$. \label{upper bound copy(1)}
\end{thm}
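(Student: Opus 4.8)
The plan is to bound the second moment $\tilde{E}_r[N_A^2 L^2;\tau_s<\tau_A]$ by controlling three separate quantities: the likelihood ratio $L$, the count $N_A$, and the probability of the rare event, using the mixture structure of $L$ to select a favorable time horizon. First I would use the mixture representation \eqref{L} to write $L = (\sum_k P(\tau=T+k\delta)L_{T+k\delta}^{-1})^{-1} \leq P(\tau=T+k_0\delta)^{-1} L_{T+k_0\delta}$ for \emph{any} chosen index $k_0$, and in particular I would make the ``right horizon'' choice $T+k_0\delta=\lceil\tau_s\rceil$ as foreshadowed in Section 2.2 (i.e. $k_0\delta$ is the smallest lattice multiple of $\delta$ making $T+k_0\delta\geq\tau_s$). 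Because $\delta=c/s$ and $\tau$ follows a power law of order $2$, the mixture weight $P(\tau=T+k_0\delta)^{-1}$ is only polynomially large in $\tau_s$ and $s$ — hence subexponential — \emph{provided} $\tau_s$ itself is not exponentially large, which holds on the event $\tau_s<\tau_A$ since $\tau_A$ has subexponential moments by Proposition \ref{asymptotic}. This reduces the problem to bounding $L_{\lceil\tau_s\rceil}$ on $\{\tau_s<\tau_A\}$.

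Next I would bound $L_{\lceil\tau_s\rceil}$ pathwise. Setting $t=\lceil\tau_s\rceil\geq\tau_s$, formula \eqref{L_t} (first branch) gives
\begin{equation*}
L_t=\exp\left\{s\sum_{i=1}^{N_s(\tau_s)-1}\psi_N(\log(e^{\theta_t}\bar{F}(t-A_i)+F(t-A_i)))U_i-\theta_t\sum_{i=1}^{N_s(\tau_s)-1}I(V_i>t-A_i)\right\}.
\end{equation*}
The key monotonicity facts are that $\psi_N(\log(e^{\theta_t}\bar{F}(t-u)+F(t-u)))$ is non-decreasing in $u$ for $\theta_t>0$, so the left-endpoint Riemann sum is dominated by $\int_0^{\tau_s}\psi_N(\log(e^{\theta_t}\bar{F}(t-u)+F(t-u)))\,du\leq\psi_t(\theta_t)$ (using $\tau_s\leq t$), exactly as in the proof of the Proposition preceding Section 2.2.2. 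For the second sum, $\sum_{i=1}^{N_s(\tau_s)}I(V_i>\tau_s-A_i)=s+1-r(\tau_s)$ customers occupy servers at overflow, and since the residual-service indicator $I(V_i>t-A_i)$ counts those still present at the later time $t\geq\tau_s$, this sum is at least $s-r(t)=a_t s+o(s)$ up to boundary terms of size $O(1)$; here I would need a short argument (the ``overshoot'' control that motivated the randomized horizon) that the one extra time unit $t-\tau_s\leq1$ does not let too many of the $s$ occupying customers depart — this is where the light-tail assumption \eqref{light-tail assumption} on $\bar F$ and the coupling with $GI/G/\infty$ enter, guaranteeing $\sum I(V_i>t-A_i)\geq a_ts - o(s)$. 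Combining, $L_t\leq\exp\{s\psi_t(\theta_t)-\theta_t(a_ts-o(s))\}\cdot e^{\theta_t}$, hence $L_t^2\leq e^{-2sI_t+o(s)}$ by \eqref{I_t}, and finally $I_t\geq I^*$ by Lemma \ref{rate} Part 2 (monotonicity and limit), giving $L^2\leq e^{-2sI^*+o(s)}$ times a subexponential factor, uniformly on $\{\tau_s<\tau_A\}$.

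Finally I would assemble the pieces. Writing $\tilde{E}_r[N_A^2L^2;\tau_s<\tau_A]\leq e^{-2sI^*+o(s)}\,\tilde{E}_r[N_A^2\cdot(\text{subexp factor})^2;\tau_s<\tau_A]$, it remains to show the residual expectation is subexponential in $s$. The subexponential factor is a polynomial in $\tau_s\leq\tau_A$ and $s$; after the overflow time the process runs under the \emph{original} measure, so $N_A$ (the number of losses in the cycle) and $\tau_A$ under $\tilde P$ restricted to $\{\tau_s<\tau_A\}$ are governed, after $\tau_s$, by the nominal dynamics, and Proposition \ref{asymptotic} gives $E_A N_A^p$ and $E_A\tau_A^p$ subexponential for all $p$; a Cauchy–Schwarz split across $\{\tau_s<\tau_A\}$ together with the (subexponential) likelihood-ratio-free bookkeeping of the post-$\tau_s$ segment then yields the claim. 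The technical condition \eqref{technical} is invoked (as flagged after its statement and in Lemma \ref{tilde I}) precisely to rule out the exceptional paths hitting overflow so early that $\theta_t\to\infty$ blows up $L_t$; these are handled by the constant $T$ and condition \eqref{T}, $\tilde I_T>2I^*$, which ensures that the contribution of horizons forced below $T$ — where we revert to the original measure — is itself bounded by $e^{-2sI^*+o(s)}$ via the rate function $\tilde I_T$ of $N_s(\cdot)$. I expect the main obstacle to be the overshoot estimate: making rigorous that $L_{\lceil\tau_s\rceil}$ is controlled requires simultaneously (i) the Riemann-sum domination, (ii) the lower bound on the occupancy sum at the shifted time $\lceil\tau_s\rceil$ rather than at $\tau_s$, and (iii) uniformity of all $o(s)$ terms over the random $\tau_s$ and over $r(\cdot)\in J(\cdot)$ — the last point leaning on criterion 2) of Remark 2 that every element of $J(\cdot)$ is within $o(s)$ of the fluid mean.
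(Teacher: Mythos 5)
Your opening moves match the paper's: bound the mixture likelihood ratio by the single term at the horizon $\lceil\tau_s\rceil$ (paying only a polynomial factor $\tau_s^3/\delta^3$), dominate the Riemann sum by $\psi_{\lceil\tau_s\rceil}(\theta_{\lceil\tau_s\rceil})$ via monotonicity, and dispose of the early-overflow paths $\{\tau_s\leq T\}$ through the choice $\tilde I_T>2I^*$ and H\"older. But there is a genuine gap at the crux, which is exactly the ``overshoot'' step you label a ``short argument.'' You claim that, uniformly on $\{\tau_s<\tau_A\}$, the occupancy sum at the shifted time satisfies $\sum_{i\le N_s(\tau_s)} I(V_i>\lceil\tau_s\rceil-A_i)\geq a_{\lceil\tau_s\rceil}s-o(s)$, so that $L^2\leq e^{-2sI^*+o(s)}$ pathwise. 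No such pathwise bound holds: the number of the $s+1-r(\tau_s)$ occupying customers who depart during $(\tau_s,\lceil\tau_s\rceil]$ is a random variable with no deterministic $o(s)$ control (the light-tail assumption on $\bar F$ constrains distributions, not sample paths), and $\theta_{\lceil\tau_s\rceil}$ multiplies it in the exponent. The paper therefore has to bound the \emph{exponential moment} of the overshoot $sa_{\lceil\tau_s\rceil}+1-\bar Q^\infty(\tau_s,\lceil\tau_s\rceil-\tau_s)$ \emph{jointly} with retaining the exponential smallness of the event $\{\tau_s\in(T+(k-1)\delta,T+k\delta]\}$: it decomposes over lattice cells $k$, passes to the two-dimensional point-process picture, bounds the overshoot by $H_k$ on the event $G_k>s-r(T+(k-1)\delta)$, conditions on arrival times so that membership in $H_k$ becomes independent Bernoulli with success probability $p_k(z)=O(\delta/\bar F(\cdot))$, and applies Chernoff bounds together with the uniform G\"artner--Ellis-type limit (Lemma \ref{uniformity}), with the extra splitting point $z=T+(k-1)\delta-\rho_s$ needed precisely because the service times have unbounded support. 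That machinery is the bulk of the proof and cannot be replaced by the pathwise assertion you make.

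A second, related issue: after extracting $e^{-2sI^*+o(s)}$ you are left with $\tilde E_r[N_A^2\cdot(\mathrm{poly}(\tau_s,s))^2;\tau_s<\tau_A]$, i.e.\ moments of $N_A$ and $\tau_s\le\tau_A$ under the \emph{importance sampling} measure, whereas Proposition \ref{asymptotic} controls these moments only under the original measure (and the tilted dynamics up to $\tau_s$ do change the law of $N_A$ and $\tau_A$, even though the algorithm reverts to $P$ afterwards). The paper sidesteps this by first writing $\tilde E_r[N_A^2L^2;\tau_s<\tau_A]=E_r[N_A^2L;\tau_s<\tau_A]$, so that all subsequent H\"older/Minkowski bookkeeping of $N_A$, $\tau_A$ and $P_r(\tau_A>\cdot)$ takes place under $P$, where Proposition \ref{asymptotic} applies. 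You should incorporate this change of measure back to $P$ before invoking the cycle-moment estimates.
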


%\bigskip

This result, together with Theorem \ref{lower bound copy(1)} in the sequel,
will expose a loop of inequality that leads to asymptotic optimality and
large deviations asymptotic simultaneously. The main technicality of this
result is an estimate of the continuity of the likelihood ratio, or
intuitively the \textquotedblleft overshoot" at the time of loss. It draws
upon a two-dimensional point process description of the system, in which the
geometry of the process plays an important role in estimating this
\textquotedblleft overshoot".

\begin{proof} Denote $\lceil x\rceil =\min \{T+k\delta ,\ k=0,1,\ldots
:x\leq T+k\delta \}$. Also recall the definition $a_{t}=1-\lambda
\int_{t}^{\infty }\bar{F}(u)du$.

Consider the likelihood ratio in \eqref{L}:
\begin{eqnarray*}
&&LI(\tau _{s}<\tau _{A})=\frac{1}{\sum_{k=0}^{\infty }P(\tau =T+k\delta
)L_{T+k\delta }^{-1}}I(\tau _{s}<\tau _{A})\leq \frac{L_{\lceil \tau _{s}\rceil }}{P(\tau =\lceil \tau _{s}\rceil )}%
I(\tau _{s}<\tau _{A}) \\
&=&P(\tau =T)^{-1}I(\tau _{s}\leq T;\tau _{s}<\tau _{A})+P(\tau =\lceil \tau
_{s}\rceil )^{-1}\exp \Bigg\{s\sum_{i=1}^{N_{s}(\tau _{s})-1}\psi _{N}(\log
(e^{\theta _{\lceil \tau _{s}\rceil }}\bar{F}(\lceil \tau _{s}\rceil
-A_{i}){} \\
&&{}+F(\lceil \tau _{s}\rceil -A_{i})))U_{i}-\theta _{\lceil \tau _{s}\rceil
}\sum_{i=1}^{N_{s}(\tau _{s})-1}I(V_{i}>\lceil \tau _{s}\rceil -A_{i})\Bigg\}%
I(\tau _{s}>T;\tau _{s}<\tau _{A}) \\
&\leq &C_{1}I(\tau _{s}\leq T;\tau _{s}<\tau _{A})+\frac{C_{2}\tau _{s}^{3}}{%
\delta ^{3}}\exp \left\{ s\psi _{\lceil \tau _{s}\rceil }(\theta _{\lceil
\tau _{s}\rceil })-\theta _{\lceil \tau _{s}\rceil }(\bar{Q}^{\infty }(\tau
_{s},\lceil \tau _{s}\rceil -\tau _{s})-1)\right\}{}\\
&&{} I(\tau _{s}>T;\tau
_{s}<\tau _{A}) \\
&\leq &C_{1}I(\tau _{s}\leq T;\tau _{s}<\tau _{A})+\frac{C_{2}\tau _{s}^{3}}{%
\delta ^{3}}\exp \Bigg\{-sI^{\ast }+\theta _{\lceil \tau _{s}\rceil }\Bigg(%
sa_{\lceil \tau _{s}\rceil }+1-\bar{Q}^{\infty }(\tau _{s},\lceil \tau
_{s}\rceil -\tau _{s})\Bigg)\Bigg\}{}\\
&&{}I(\tau _{s}>T;\tau _{s}<\tau _{A})
\end{eqnarray*}%
where $C_{1}$ and $C_{2}$ are positive constants. Note that the second inequality
comes from the fact that $\sum_{i=1}^{N_{s}(\tau _{s})-1}\psi _{N}(\log
(e^{\theta _{\lceil \tau _{s}\rceil }}\bar{F}(\lceil \tau _{s}\rceil
-A_{i})+F(\lceil \tau _{s}\rceil -A_{i})))U_{i}$
is a Riemann sum of the
integral $\psi _{\lceil \tau _{s}\rceil }(\theta _{\lceil \tau _{s}\rceil
})=\int_{0}^{\lceil \tau _{s}\rceil }\psi _{N}(\log (e^{\theta _{\lceil \tau
_{s}\rceil }}\bar{F}(\lceil \tau _{s}\rceil -u)+F(\lceil \tau _{s}\rceil
-u)))du$ (excluding the intervals at the two ends) and that $\psi _{N}(\log (e_{\lceil \tau _{s}\rceil }^{\theta }\bar{%
F}(\lceil \tau _{s}\rceil -u)+F(\lceil \tau _{s}\rceil -u)))$ is a
non-decreasing function in $u$. Also note that $\sum_{i=1}^{N_{s}(\tau
_{s})}I(V_{i}>\lceil \tau _{s}\rceil -A_{i}){}=\bar{Q}^{\infty }(\tau
_{s},\lceil \tau _{s}\rceil -\tau _{s})$ is the number of customers who
arrive before $\tau _{s}$ and leave after $\lceil \tau _{s}\rceil $. The
last inequality follows from the definition of $I_{\lceil \tau _{s}\rceil }$
and Lemma \ref{rate} Part 2. Now we have
\begin{eqnarray}
&&\tilde{E}_{r}[N_{A}^{2}L^{2};\tau _{s}<\tau _{A}]=E_{r}[N_{A}^{2}L;\tau
_{s}<\tau _{A}]  \notag \\
&\leq &C_{1}E_{r}[N_{A}^{2};\tau _{s}\leq T;\tau _{s}<\tau _{A}]+\frac{C_{2}%
}{\delta ^{3}}e^{-sI^{\ast }}E_{r}\Bigg[N_{A}^{2}\tau _{s}^{3}\exp \left\{
\theta _{\lceil \tau _{s}\rceil }\left( sa_{\lceil \tau _{s}\rceil }+1-\bar{Q%
}^{\infty }(\tau _{s},\lceil \tau _{s}\rceil -\tau _{s})\right) \right\};{} \notag\\
&&{}\tau _{s}>T;\tau _{s}<\tau _{A}\Bigg]  \label{eq4}
\end{eqnarray}%
Consider the first summand. By Holder's inequality $
E_{r}[N_{A}^{2};\tau _{s}\leq T;\tau _{s}<\tau _{A}]\leq
(E_{r}[N_{A}^{2p}])^{1/p}(P_{r}(\tau _{s}\leq T))^{1/q}$
for $1/p+1/q=1$. Also, $P_{r}(\tau _{s}\leq T)\leq P(N_{s}(T)>s-r(T))\leq P(N_{s}(T)>s(1-\lambda
EV)+o(s))$ and a straightforward invocation of Gartner-Ellis Theorem yields
$\lim_{s\rightarrow \infty }\frac{1}{s}\log P(N_{s}(T)>s(1-\lambda EV)+o(s))=-%
\tilde{I}_{T}<-2I^{\ast }$ by our choice of $T$ in \eqref{T}. Combining these observations, and using Lemma \ref{asymptotic},
we get
\begin{equation*}
\limsup_{s\rightarrow \infty }\frac{1}{s}\log E_{r}[N_{A}^{2};\tau _{s}\leq
T;\tau _{s}<\tau _{A}]\leq \limsup_{s\rightarrow \infty }\frac{1}{sp}\log
E_{r}[N_{A}^{2p}]+\limsup_{s\rightarrow \infty }\frac{1}{sq}\log P_{r}(\tau
_{s}\leq T)\leq -2I^{\ast }
\end{equation*}%
for $q$ close enough to 1.

In view of \eqref{eq4} and Dembo and Zeitouni (1998) Lemma 1.2.15, the proof will be complete once we can prove that
\begin{equation}
\limsup_{s\rightarrow \infty }\frac{1}{s}\log E_{r}\Bigg[N_{A}^{2}\tau
_{s}{}^{3}\exp \left\{ \theta _{\lceil \tau _{s}\rceil }\left( sa_{\lceil
\tau _{s}\rceil }+1-\bar{Q}^{\infty }(\tau _{s},\lceil \tau _{s}\rceil -\tau
_{s})\right) \right\} ;\tau _{s}>T;\tau _{s}<\tau _{A}\Bigg]\leq -I^{\ast }
\label{eq5}
\end{equation}%
To this end, we write
\begin{eqnarray}
&&E_{r}\Bigg[N_{A}^{2}\tau _{s}{}^{3}\exp \left\{ \theta _{\lceil \tau
_{s}\rceil }\left( sa_{\lceil \tau _{s}\rceil }+1-\bar{Q}^{\infty }(\tau
_{s},\lceil \tau _{s}\rceil -\tau _{s})\right) \right\} ;\tau _{s}>T;\tau
_{s}<\tau _{A}\Bigg]  \notag \\
&=&E_{r}\Bigg[N_{A}^{2}\tau _{s}{}^{3}\exp \left\{ \theta _{\lceil \tau
_{s}\rceil }\left( s+1-\lambda s\int_{\lceil \tau _{s}\rceil }^{\infty }\bar{%
F}(u)du-\bar{Q}^{\infty }(\tau _{s},\lceil \tau _{s}\rceil -\tau
_{s})\right) \right\} ;\tau _{s}>T;\tau _{s}<\tau _{A}\Bigg]  \notag \\
&\leq &e^{C\theta _{T}\sqrt{s}}E_{r}\Bigg[N_{A}^{2}\tau _{s}^{3}\exp \left\{
\theta _{\lceil \tau _{s}\rceil }\left( s+1-r(\lceil \tau _{s}\rceil )-\bar{Q%
}^{\infty }(\tau _{s},\lceil \tau _{s}\rceil -\tau _{s})\right) \right\}
;\tau _{s}>T;\tau _{s}<\tau _{A}\Bigg]  \notag \\
&=&e^{C\theta _{T}\sqrt{s}}\sum_{k=1}^{\infty }E_{r}\Bigg[N_{A}^{2}\tau
_{s}^{3}\exp \left\{ \theta _{\lceil \tau _{s}\rceil }\left( s+1-r(\lceil
\tau _{s}\rceil )-\bar{Q}^{\infty }(\tau _{s},\lceil \tau _{s}\rceil -\tau
_{s})\right) \right\} ;\lceil \tau _{s}\rceil =T+k\delta ;{} \notag\\
&&{}\tau
_{A}>T+(k-1)\delta \Bigg]  \notag \\
&\leq &e^{C\theta _{T}\sqrt{s}}\sum_{k=1}^{\infty
}(E_{r}N_{A}^{2p})^{1/p}(E_{r}\tau _{A}{}^{3q})^{1/q}(P_{r}(\tau
_{A}>T+(k-1)\delta ))^{1/h}{}  \notag \\
&&{}\left( E_{r}\left[ \exp \left\{ l\theta _{T+k\delta }\left(
s+1-r(T+k\delta )-\bar{Q}^{\infty }(\tau _{s},T+k\delta -\tau _{s})\right)
\right\} ;T+(k-1)\delta <\tau _{s}\leq T+k\delta \right] \right) ^{1/l}
\notag  \label{main} \\
&=&e^{O(\sqrt{s})}\sum_{k=1}^{\infty }(E_{r}N_{A}^{2p})^{1/p}(E_{r}\tau
_{A}{}^{3q})^{1/q}(P_{r}(\tau _{A}>T+(k-1)\delta ))^{1/h}{}  \notag \\
&&\left( E_{r}\left[ \exp \left\{ l\theta _{T+k\delta }\left( s+1-r(\tau
_{s})-\bar{Q}^{\infty }(\tau _{s},T+k\delta -\tau _{s})\right) \right\}
;T+(k-1)\delta <\tau _{s}\leq T+k\delta \right] \right) ^{1/l}  \label{eq9}
\end{eqnarray}%
where $C$ is a positive constant and $1/p+1/q+1/h+1/l=1$. The first
inequality follows from the fact that $r(\cdot )\in J(\cdot )$ and Lemma \ref%
{rate} Part 1 while the second inequality follows from generalized Holder's
inequality. The last equality holds because $r(\tau _{s})-r(T+k\delta )=o(s)$, again since $r(\cdot)\in J(\cdot)$, for $T+(k-1)\delta <\tau _{s}\leq T+k\delta $.

We now analyze
\begin{equation}
E_{r}\left[ \exp \left\{ l\theta _{T+k\delta }\left( s+1-r(\tau _{s})-\bar{Q}%
^{\infty }(\tau _{s},T+k\delta -\tau _{s})\right) \right\} ;T+(k-1)\delta
<\tau _{s}\leq T+k\delta \right]  \label{eq10}
\end{equation}%
We plot the arrivals on a two-dimensional plane, with $x$-axis indicating
the time of arrival and $y$-axis indicating the assigned service time at the
time of arrival. Such plot has been used in the study of $M/G/\infty$ system (see for example Foley (1982)). In this representation it is easy to see that the departure
time of an arriving customer is the $45^{\circ }$ projection of the point
onto the $x$-axis. As a result, $\bar{Q}^{\infty }(t)$ for example, will be
the number of all the points inside the triangular simplex created by a
vertical line and a downward $45^{\circ }$ line joining at the point $(t,0)$%
. See Figure 1.

\begin{figure}[ht]
\centering
\subfigure{
\includegraphics[scale=.3]{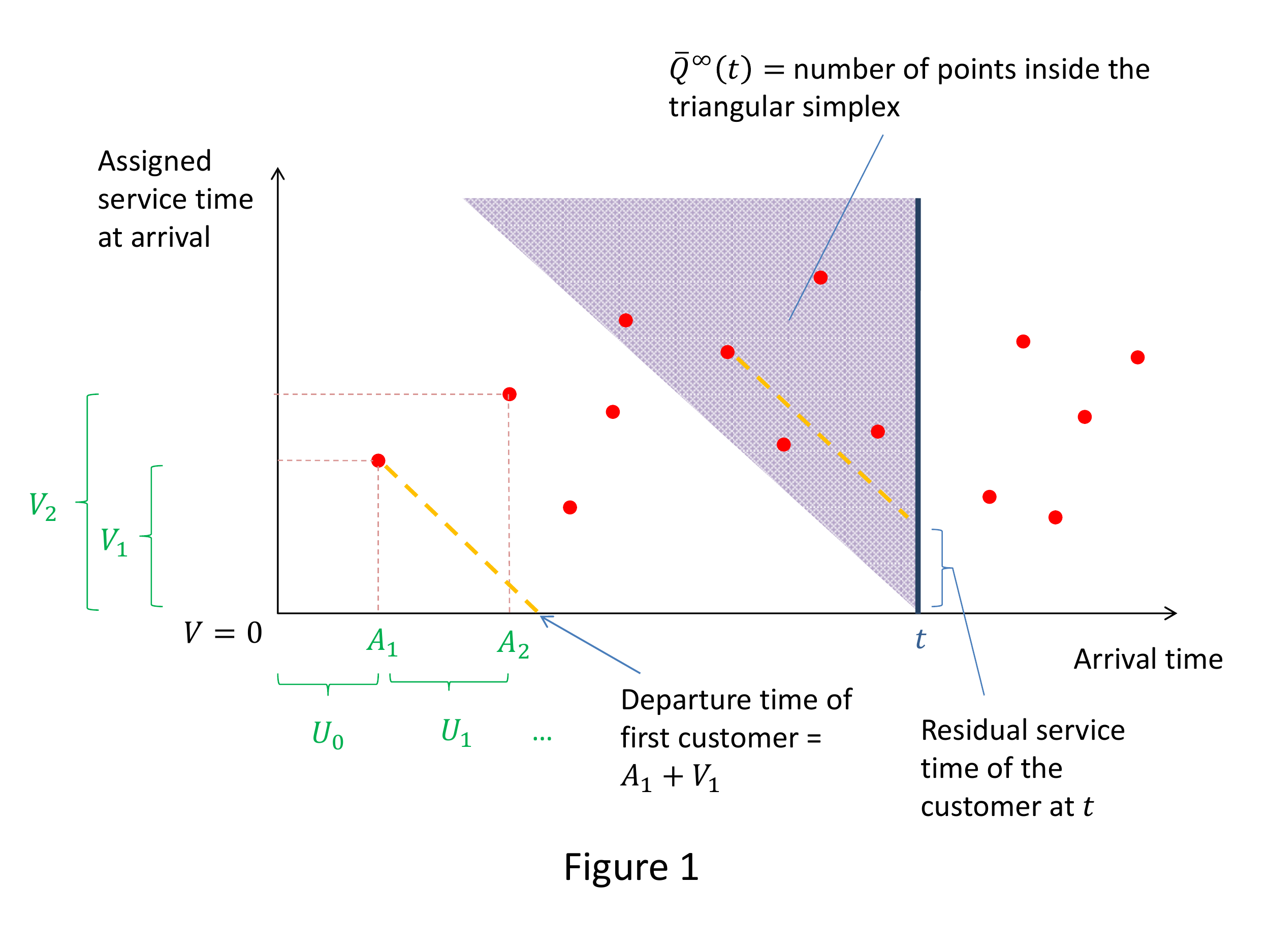}
} \subfigure{
\includegraphics[scale=.3]{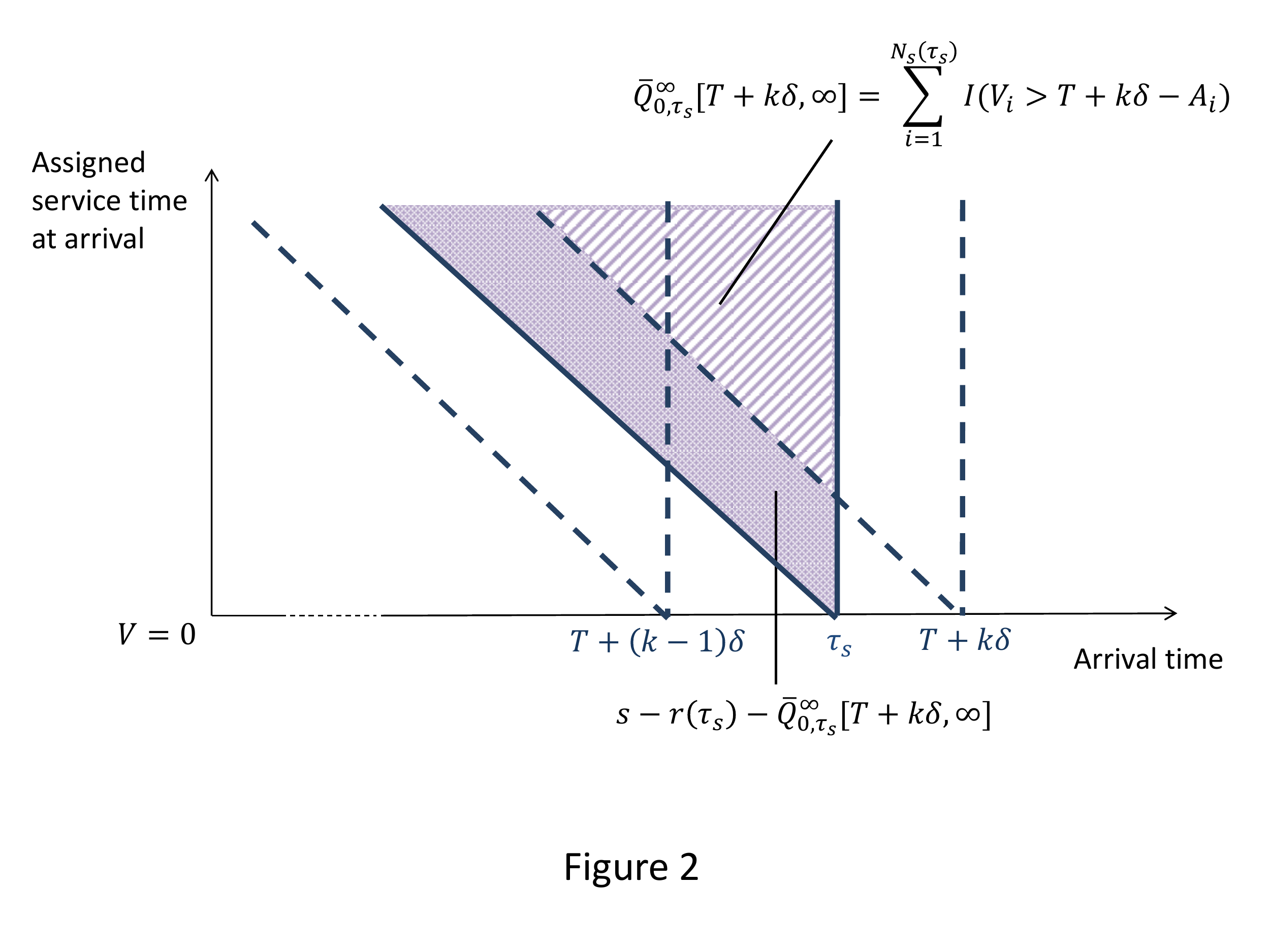}
} \subfigure{
\includegraphics[scale=.3]{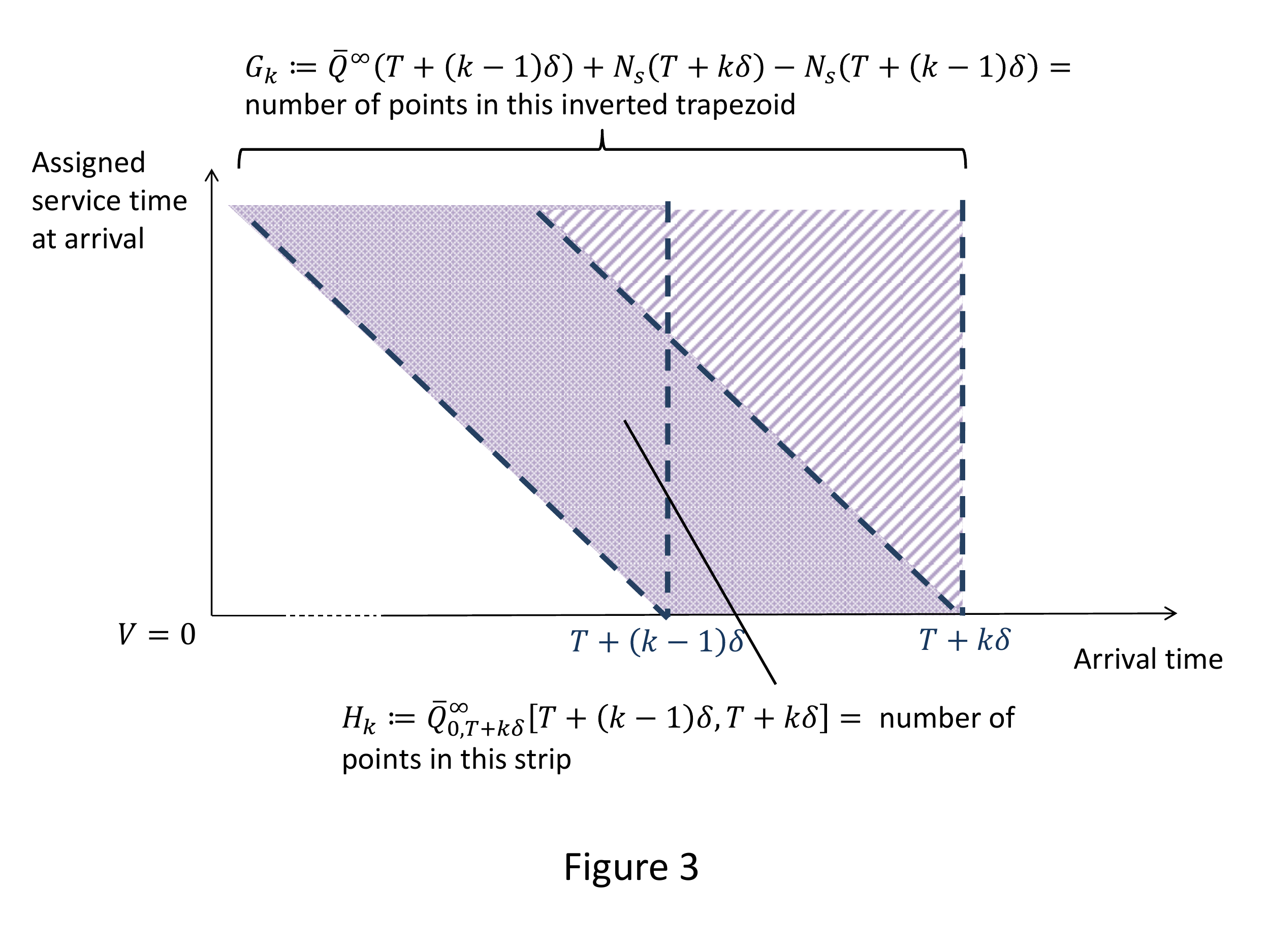}
} \subfigure{
\includegraphics[scale=.3]{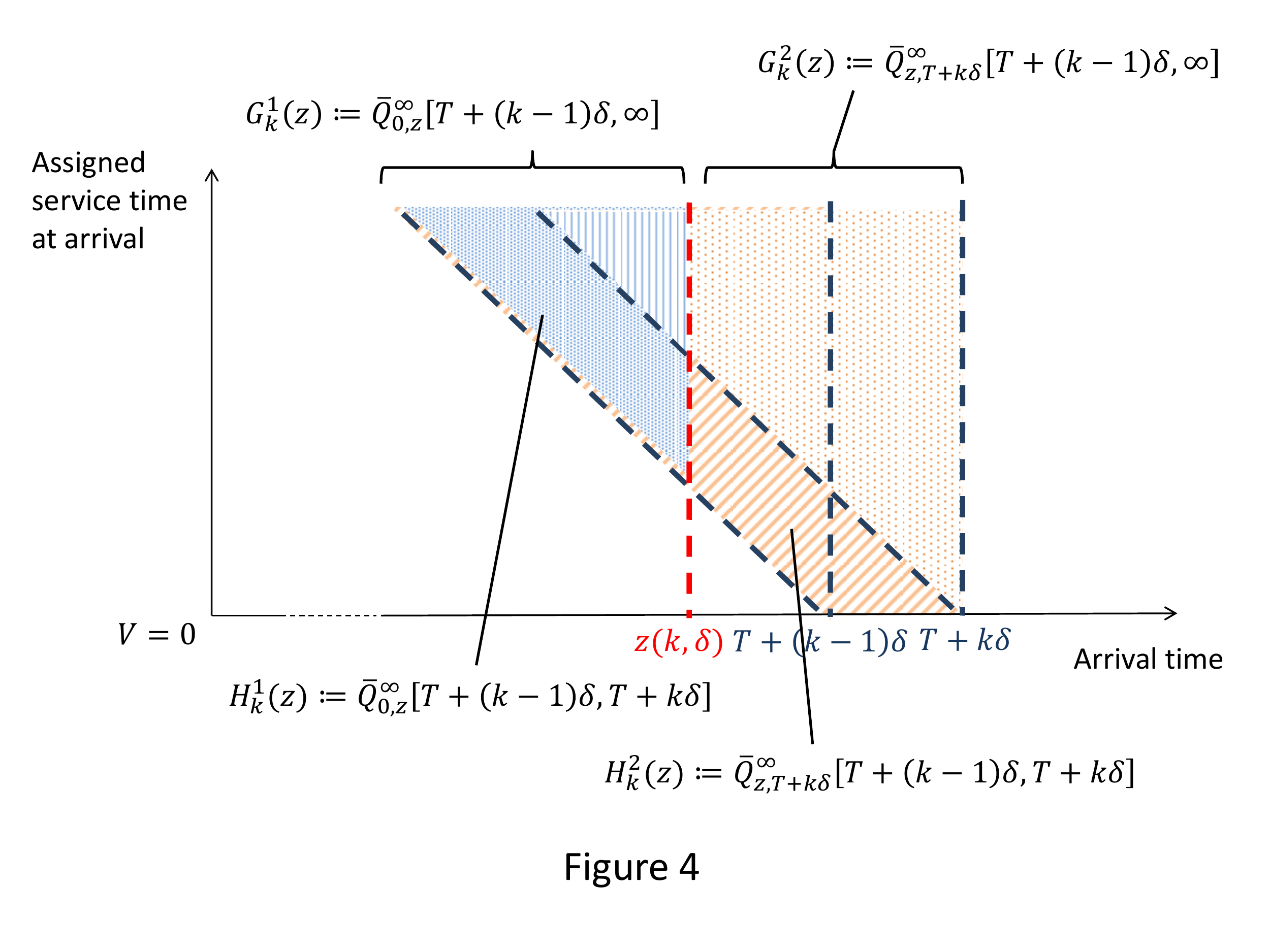}
}
\end{figure}

For notational convenience we denote $\bar{Q}_{t_{1},t_{2}}^{\infty
}[t_{3},t_{4}]:=\sum_{i=N_{s}(t_{1})+1}^{N_{s}(t_{2})}I(t_{3}-A_{i}<V_{i}%
\leq t_{4}-A_{i})$ as the number of customers in the $GI/G/\infty $ system
who arrive sometime in $(t_{1},t_{2}]$ and leave the system sometime in $%
(t_{3},t_{4}]$. It is easy to see, for example, that $\bar{Q}^{\infty }(\tau
_{s},T+k\delta -\tau _{s})=\bar{Q}_{0,\tau _{s}}^{\infty }[T+k\delta ,\infty
]$ for $T+k\delta \geq \tau _{s}$.

Figure 2 shows the region filled in by $\bar{Q}^{\infty }(\tau
_{s},T+k\delta -\tau _{s})=\bar{Q}_{0,\tau _{s}}^{\infty }[T+k\delta ,\infty
]$ as a shifted simplex starting from the point $(\tau _{s},T+k\delta -\tau
_{s})$. Note that by definition $\bar{Q}^{\infty }(\tau _{s})=s+1-r(\tau
_{s})$, and so $s+1-r(\tau _{s})-\bar{Q}_{0,\tau _{s}}^{\infty }[T+k\delta
,\infty ]$ corresponds to the downward strip ending at $(\tau _{s},0)$ and $%
(\tau _{s},T+k\delta -\tau _{s})$, which is obviously smaller than the
region represented by $H_k:=\bar{Q}_{0,T+k\delta }^{\infty }[T+(k-1)\delta
,T+k\delta ]$ in Figure 3.

Define $G_{k}=\bar{Q}^{\infty }(T+(k-1)\delta )+N_{s}(T+k\delta
)-N_{s}(T+(k-1)\delta )$, which is represented by the trapezoidal area
depicted in Figure 3. Observe that $T+(k-1)\delta <\tau _{s}\leq T+k\delta $
implies that one of the triangular simplex corresponding to $\bar{Q}^{\infty
}(t),$ for $T+(k-1)\delta <t\leq T+k\delta ,$ has number of points larger
than $s-r(T+(k-1)\delta )$. This in turn implies that the region represented
by $G_{k}$ has more than $s-r(T+(k-1)\delta )$ number of points.

The above observations lead to
\begin{eqnarray}
&&E_{r}[\exp \{l\theta _{T+k\delta }(s+1-r(\tau _{s})-\bar{Q}_{0,\tau
_{s}}^{\infty }[T+k\delta ,\infty ])\}{};T+(k-1)\delta <\tau _{s}\leq
T+k\delta ]  \notag \\
&\leq &E_{r}[e^{l\theta _{T+k\delta }H_{k}};G_{k}>s-r(T+(k-1)\delta )]
\label{eq6}
\end{eqnarray}

From now on we focus on the case when service time has unbounded support
(the bounded support case is simpler and will be presented later in the proof). We introduce a time point $z=z(k,s)$ and consider the
divisions of areas represented by $H_{k}$ and $G_{k}$ in Figure 4:
\begin{equation*}
\begin{array}{ll}
H_{k}^{1}(z):=\bar{Q}_{0,z}^{\infty }[T+(k-1)\delta ,T+k\delta ] & \subset \
\ G_{k}^{1}(z):=\bar{Q}_{0,z}^{\infty }[T+(k-1)\delta ,\infty ] \\
H_{k}^{2}(z):=\bar{Q}_{z,T+k\delta }^{\infty }[T+(k-1)\delta ,T+k\delta ] &
\subset \ \ G_{k}^{2}(z):=\bar{Q}_{z,T+k\delta }^{\infty }[T+(k-1)\delta
,\infty ]%
\end{array}%
\end{equation*}%
Note that $H_{k}=H_{k}^{1}(z)+H_{k}^{2}(z)$ and $%
G_{k}=G_{k}^{1}(z)+G_{k}^{2}(z)$.

Moreover, define $A_{i}^{k},i=1,\ldots ,G_{k}$ to be the arrival
times of all the customers that $G_{k}$ is counting. Note that given the
arrival times $A_{i}^{k},i=1,\ldots ,G_{k}$, the events whether each
of these customers falls into $H_{k}$ are independent Bernoulli random
variables with probability
\begin{equation}
p_{i}^{k}:=\frac{\bar{F}(T+(k-1)\delta -A_{i}^{k})-\bar{F}(T+k\delta
-A_{i}^{k})}{\bar{F}(T+(k-1)\delta -A_{i}^{k})}  \label{prob}
\end{equation}%
Hence we can write \eqref{eq6} as
\begin{eqnarray}
&&E_{r}[e^{l\theta _{T+k\delta
}(H_{k}^{1}(z)+H_{k}^{2}(z))};G_{k}>s-r(T+(k-1)\delta )]  \notag \\
&=&E_{r}[E_{r}[e^{l\theta _{T+k\delta }(H_{k}^{1}(z)+H_{k}^{2}(z))}|A%
_{i}^{k},i=1,\ldots ,G_{k}];G_{k}>s-r(T+(k-1)\delta )]  \notag \\
&=&E_{r}[E_{r}[e^{l\theta _{T+k\delta }H_{k}^{1}(z)}|A%
_{i}^{k},i=1,\ldots ,G_{k}^{1}(z)]E_{r}[e^{l\theta _{T+k\delta
}H_{k}^{2}(z)}|A_{i}^{k},i=G_{k}^{1}(z)+1,\ldots
,G_{k}^{1}(z)+G_{k}^{2}(z)];{} \notag\\
&&{}G_{k}^{1}(z)+G_{k}^{2}(z)>s-r(T+(k-1)\delta )]
\notag \\
&\leq &E_{r}\left[ e^{l\theta _{T+k\delta
}G_{k}^{1}(z)}\prod_{i=G_{k}^{1}(z)+1}^{G_{k}^{1}(z)+G_{k}^{2}(z)}(1+(e^{l%
\theta _{T+k\delta
}}-1)p_{i}^{k});G_{k}^{1}(z)+G_{k}^{2}(z)>s-r(T+(k-1)\delta )\right]
\label{eq7}
\end{eqnarray}%
Let
\begin{equation}
p_{k}(z):=\sup_{A_{i}^{k}>z}p_{i}^{k}\leq \frac{C\delta }{\bar{F}%
(T+k\delta -z)}  \label{max prob}
\end{equation}%
for some constant $C>0$, where the inequality follows from \eqref{prob}. Also let
\begin{align*}
\psi _{s,z,k}^{1}(\theta )& :=\log Ee^{\theta
G_{k}^{1}(z)}=s\int_{0}^{z}\psi _{N}(\log (e^{\theta }\bar{F}(T+(k-1)\delta
-u)+F(T+(k-1)\delta -u)))du+o(s) \\
\psi _{s,z,k}^{2}(\theta )& :=\log Ee^{\theta
G_{k}^{2}(z)}=s\int_{z}^{T+k\delta }\psi _{N}(\log (e^{\theta }\bar{F}%
(T+(k-1)\delta -u)+F(T+(k-1)\delta -u)))du+o(s)
\end{align*}%
where $o(s)$ is uniform in $\theta$, $k$ and $z$. This is due to the following lemma, whose proof will be deferred to the appendix:

\begin{lemma}
We have
$$\frac{1}{s}\log Ee^{\theta\bar{Q}_{w,z}^\infty[t,\infty]}\to\int_{w}^{z}\psi _{N}(\log (e^{\theta }\bar{F}(t-u)+F(t-u)))du$$
uniformly over $\theta\in[\theta_\infty,\theta_T]$, $t\geq T$ and $0\leq w\leq z\leq t+\eta$ for any $\eta>0$. \label{uniformity}
\end{lemma}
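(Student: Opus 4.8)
The plan is to pass from $\bar Q_{w,z}^\infty[t,\infty]$ to increments of the arrival counting process by conditioning, apply the Gartner--Ellis limit \eqref{increment property} on a suitable mesh, and then upgrade the resulting pointwise convergence to the stated uniformity; the one genuinely new difficulty is that $t$ ranges over an \emph{unbounded} set.

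Conditioning on the arrival times, the indicators $\{I(V_i>t-A_i)\}_{w<A_i\le z}$ are independent with success probability $\bar F(t-A_i)$ (with the convention $\bar F(x)=1$ for $x<0$), and $\bar Q_{w,z}^\infty[t,\infty]=\sum_{i:\,w<A_i\le z}I(V_i>t-A_i)$, so
\begin{equation*}
Ee^{\theta\bar Q_{w,z}^\infty[t,\infty]}=E\exp\Big\{\sum_{i:\,w<A_i\le z}\phi_\theta(t-A_i)\Big\},\qquad \phi_\theta(x):=\log\big(e^\theta\bar F(x)+F(x)\big)=\log\big(1+(e^\theta-1)\bar F(x)\big).
\end{equation*}
For $\theta\in[\theta_\infty,\theta_T]\subset(0,\infty)$ the function $\phi_\theta$ is continuous, non-increasing, and $0\le\phi_\theta(x)\le\min\{\theta_T,(e^{\theta_T}-1)\bar F(x)\}$; hence $u\mapsto\psi_N(\phi_\theta(t-u))$ is non-decreasing, bounded by $\psi_N(\theta_T)$, and dominated by $C_0\bar F(t-u)$ on $\{u\le t\}$ for some $C_0=C_0(\theta_T)$ (using $\psi_N(0)=0$ and smoothness of $\psi_N$ near $0$). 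Since $\int_0^\infty\bar F=EV<\infty$, this domination already shows the target $\int_w^z\psi_N(\phi_\theta(t-u))\,du$ is bounded uniformly in $(\theta,t,w,z)$ and that its mass on $\{u\le t-M\}$ is at most $C_0\int_M^\infty\bar F\to0$ as $M\to\infty$, uniformly.

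Fix a truncation level $M$ and split the sum into a \emph{head} over $u\in(w\vee(t-M),z]$ and a \emph{tail} over $u\in(w,t-M]$. On the head --- an interval of $t$-independent length $\le M+\eta$ --- I would sandwich $\sum\phi_\theta(t-A_i)$ between upper and lower step-function approximations on a fixed mesh $h_0$ (legitimate by monotonicity of $\phi_\theta$), rewrite each as an exponentiated linear combination of increments of $N_s$, pad with a leading coefficient-$0$ block so that \eqref{increment property} applies verbatim, and conclude that $\frac1s\log$ of the corresponding expectations converges to a Riemann sum whose distance from $\int_{w\vee(t-M)}^z\psi_N(\phi_\theta(t-u))\,du$ is at most $h_0\psi_N(\theta_T)$. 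The convergence in \eqref{increment property} is needed uniformly over $\theta$ in the compact set $[\theta_\infty,\theta_T]$ --- which holds because the normalized logarithmic moment generating functions are convex in $\theta$ and converge pointwise to a finite convex limit --- and uniformly over the positions of the (boundedly many, bounded-length) increment windows, i.e.\ in $t,w,z$; the latter follows from the renewal large-deviation estimates underlying \eqref{increment property}, the $O(1/s)$ residual interarrival time at a window endpoint being negligible on the exponential scale. For the tail I would use $\phi_\theta(t-A_i)\le(e^{\theta_T}-1)\bar F(t-A_i)$: for the \emph{lower} bound one simply drops these non-negative terms (cost already bounded by $C_0\int_M^\infty\bar F$), while for the \emph{upper} bound one estimates $\frac1s\log E\exp\{(e^{\theta_T}-1)\sum_{i:\,A_i\le t-M}\bar F(t-A_i)\}$ by decomposing $(w,t-M]$ dyadically in the $x=t-u$ scale into blocks $B_\ell$ with $\bar F(t-u)\le\bar F(2^\ell M)$ on $B_\ell$, observing that the total mass $S:=\sum_\ell\bar F(2^\ell M)|B_\ell|\le2\int_{M/2}^\infty\bar F$ is finite, and applying the generalized H\"older inequality with weights $\pi_\ell:=\bar F(2^\ell M)|B_\ell|/S$; each factor is $(E\exp\{(e^{\theta_T}-1)(S/|B_\ell|)\Delta_\ell N\})^{\pi_\ell}$ whose normalized logarithm tends to $\pi_\ell\psi_N\big((e^{\theta_T}-1)S/|B_\ell|\big)|B_\ell|$, and since $\psi_N(y)\le2\lambda y$ for small $y$ (true for every block once $M$ is large) the whole product contributes at most $2\lambda(e^{\theta_T}-1)S\le4\lambda(e^{\theta_T}-1)\int_{M/2}^\infty\bar F$, which is uniform in $t$ because $S$ and the $\pi_\ell$ do not depend on $t$.

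Combining the head and tail bounds, letting $h_0\to0$ and then $M\to\infty$, and using $\int_{w\vee(t-M)}^z=\int_w^z+O\big(\int_M^\infty\bar F\big)$, gives the claimed uniform convergence. I expect the main obstacle to be precisely engineering this uniformity over the unbounded $t$: a uniform mesh would need $\asymp t$ pieces, and naive \emph{geometric} H\"older weights blow up for service-time tails heavier than exponential (e.g.\ lognormal), so the key points are (i) to push the $t$-dependence entirely into the \emph{positions} of increment windows, where \eqref{increment property} is already uniform, and (ii) to weight the H\"older split by the $\bar F$-mass of each block, so that the finiteness of $\int\bar F$ --- the feature that trivially bounds the analogous Poisson computation --- is what controls the estimate.
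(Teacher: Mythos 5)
Your overall architecture (truncate at distance $M$ from $t$ using $\int_0^\infty\bar F<\infty$, treat the near-$t$ ``head'' by a monotone step-function sandwich plus the Gartner--Ellis limit, treat the tail by a $\bar F$-weighted generalized H\"older bound) is reasonable, the convexity argument for uniformity in $\theta$ on $[\theta_\infty,\theta_T]$ is fine, and the weighted H\"older tail estimate is a legitimate alternative to the paper's simpler monotonicity bound. The gap is in the head. You assert that the convergence in \eqref{increment property} ``is already uniform'' over the positions of the increment windows, dismissing the residual interarrival time at a window endpoint as $O(1/s)$ and hence negligible. But \eqref{increment property} is a pointwise limit for a \emph{fixed} configuration $0=t_0<t_1<\cdots<t_m$; here the windows sit near $t$, which is unbounded, and the pre-limit cumulant generating function depends on the age of the renewal process at the window boundaries. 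For the lower-bound direction you must rule out that an atypically large residual life, conditioned on an arbitrary (possibly huge) age, eats into the head windows, and a bound on the conditional residual that is uniform in the age does \emph{not} follow from $U^{0}$ merely having exponential moments. This is exactly where the paper works: it derives $\sup_b P(U^{0}-b>x\mid U^{0}>b)\le e^{-cx}$ from the light-tail property and then sandwiches the process restarted at $t-K$ between a zero-delay version (the expectation $E_0$) and a worst-case-delay version ($E_{00}$), which, combined with the monotonicity-based uniform-convergence Lemma \ref{monotone} on compact parameter sets, is what converts pointwise convergence into uniformity over all $t\ge T$. Without some substitute for this device, your position-uniformity claim is an assumption of the hard part of the lemma, not a proof of it.

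A smaller but related issue is the tail: the number of dyadic blocks grows like $\log t$, so you cannot take the per-block limit $\frac1s\log E\exp\{(e^{\theta_T}-1)(S/|B_\ell|)\Delta_\ell N_s\}\to\psi_N((e^{\theta_T}-1)S/|B_\ell|)\,|B_\ell|$ and then multiply over $\ell$; you need finite-$s$ bounds that are uniform in both the block index and its (unbounded) position. On the upper-bound side this is repairable --- conditionally on the past, the number of arrivals in any window of length $L$ is dominated by $1$ plus a fresh renewal count over an interval of length $L$, which yields a position-free exponential bound --- but it needs to be stated; and note that this domination trick only helps for upper bounds, so it does not remove the need for the uniform residual-life control (or an $E_0$/$E_{00}$-type sandwich) on the lower-bound side of the head.
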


%\bigskip

When $p_{k}(z)$ is small
enough, \eqref{eq7} is less than or equal to
\begin{eqnarray}
&&E_{r}[e^{l\theta _{T+k\delta }G_{k}^{1}(z)}(1+(e^{l\theta _{T+k\delta
}}-1)p_{k}(z))^{G_{k}^{2}(z)};G_{k}^{1}(z)+G_{k}^{2}(z)>s-r(T+(k-1)\delta )]
\notag \\
&=&E_{r}[E_{r}[e^{l\theta _{T+k\delta }G_{k}^{1}(z)+\log (1+(e^{l\theta
_{T+k\delta }}-1)p_{k}(z))G_{k}^{2}(z)};G_{k}^{2}(z)>s-r(T+(k-1)\delta
)-G_{k}^{1}(z)|G_{k}^{1}(z),B(z)]]  \notag \\
&\leq &E_{r}[\exp \{l\theta _{T+k\delta }G_{k}^{1}(z)-\theta _{T+(k-1)\delta
}(s-r(T+(k-1)\delta )-G_{k}^{1}(z)){}  \notag \\
&&{}+\psi _{s,z,k}^{2}(\log (1+(e^{l\theta _{T+k\delta }}-1)p_{k}(z))+\theta
_{T+(k-1)\delta })\}]  \notag \\
&=&\exp \Bigg\{\psi _{s,z,k}^{1}(l\theta _{T+k\delta }+\theta
_{T+(k-1)\delta })-\theta _{T+(k-1)\delta }(s-r(T+(k-1)\delta )){}  \notag \\
&&{}+\psi _{s,z,k}^{2}(\log (1+(e^{l\theta _{T+k\delta }}-1)p_{k}(z))+\theta
_{T+(k-1)\delta })\Bigg\}  \notag \\
&=&\exp \Bigg\{s\int_{0}^{z}\psi _{N}(\log (e^{l\theta _{T+k\delta }+\theta
_{T+(k-1)\delta }}\bar{F}(T+(k-1)\delta -u)+F(T+(k-1)\delta -u)))du{}  \notag \\
&&{}-s\int_{0}^{z}\psi _{N}(\log (e^{\log (1+(e^{l\theta _{T+k\delta
}}-1)p_{k}(z))+\theta _{T+(k-1)\delta }}\bar{F}(T+(k-1)\delta -u)+F(T+(k-1)\delta
-u)))du{}  \notag \\
&&{}-\theta _{T+(k-1)\delta }(s-r(T+(k-1)\delta ))+s\psi _{T+(k-1)\delta }(\log
(1+(e^{l\theta _{T+k\delta }}-1)p_{k}(z))+\theta _{T+(k-1)\delta }){} \notag\\
&&{}+o(s)%
\Bigg\}  \label{eq8}
\end{eqnarray}%
where the inequality follows by Chernoff's inequality, and the last equality
follows from
\begin{equation*}
\psi _{s,z,k}^{2}(\theta )=s\psi _{T+(k-1)\delta }(\theta )-s\int_{0}^{z}\psi
_{N}(\log (e^{\theta }\bar{F}(T+(k-1)\delta -u)+F(T+(k-1)\delta -u)))du+o(s)
\end{equation*}%
uniformly, by Lemma \ref{uniformity}.

Now let $\rho _{s}\nearrow \infty $ be a sequence satisfying $s\bar{F}(\rho
_{s})\nearrow \infty $, whose existence is guaranteed by the unbounded support assumption. We divide into two cases: For $T+(k-1)\delta \leq \rho
_{s}$, we put $z=0$ and so by \eqref{max prob} and we have $p_{k}(0)\searrow
0$ as $s\nearrow\infty$ (recall $\delta =O(1/s)$). Consequently \eqref{eq8} becomes
\begin{equation*}
\exp \{-\theta _{T+(k-1)\delta }(s-r(T+(k-1)\delta ))+s\psi _{T+(k-1)\delta
}(\log (1+(e^{l\theta _{T+k\delta }}-1)p_{k}(z))+\theta _{T+(k-1)\delta
})+o(s)\}=e^{-sI_{T+(k-1)\delta }+o(s)}
\end{equation*}%
For $T+(k-1)\delta >\rho _{s}$, we put $z=T+(k-1)\delta -\rho _{s}$ so that $%
T+(k-1)\delta -z=\rho _{s}$. Hence again $p_{k}(z)\searrow 0$. Also,
\begin{eqnarray*}
&&\int_{0}^{z}\psi _{N}(\log (e^{l\theta _{T+k\delta }+\theta
_{T+(k-1)\delta }}\bar{F}(T+(k-1)\delta -u)+F(T+(k-1)\delta -u)))du \\
&=&\int_{T+(k-1)\delta -z}^{T+(k-1)\delta }\psi _{N}(\log (e^{l\theta _{T+k\delta
}+\theta _{T+(k-1)\delta }}\bar{F}(u)+F(u)))du \\
&\leq &\int_{T+(k-1)\delta -z}^{\infty }C_1\lambda (e^{l\theta _{T+k\delta
}+\theta _{T+(k-1)\delta }}-1)\bar{F}(u)du \\
&=&C_2\lambda \int_{\rho_s}^{\infty }\bar{F}(u)du=o(1)
\end{eqnarray*}%
for large enough $T+(k-1)\delta -z=\rho_s$ and some constants $C_1,C_2>0$, due to the fact
that $\log (1+x)\leq x$ for $x>0$ and that $\psi _{N}^{\prime }(0)=\lambda $%
. It is now obvious that \eqref{eq8} also becomes $e^{-sI_{T+(k-1)\delta
}+o(s)}$ in this case.

Hence \eqref{eq9} is less than or equal to
\begin{eqnarray*}
&&e^{-sI^{\ast }/l+o(s)}\sum_{k=1}^{\infty
}(E_{r}N_{A}^{2p})^{1/p}(E_{r}\tau _{A}{}^{3q})^{1/q}(P_{r}(\tau
_{A}>T+(k-1)\delta ))^{1/h} \\
&\leq &e^{-sI^{\ast }/l+o(s)}(E_{r}N_{A}^{2p})^{1/p}(E_{r}\tau
_{A}{}^{3q})^{1/q}\left( (P_{r}(\tau _{A}>T))^{1/h}+\frac{1}{\delta }%
\int_{T}^{\infty }(P_{r}(\tau _{A}>u))^{1/h}du\right)
\end{eqnarray*}%
From this, and using Lemma \ref{asymptotic}, we get
\begin{equation*}
\limsup_{s\rightarrow \infty }\frac{1}{s}\log E_{r}\Bigg[N_{A}^{2}\tau
_{s}{}^{2}\exp \left\{ \theta _{\lceil \tau _{s}\rceil }\left( sa_{\lceil
\tau _{s}\rceil }+1-\bar{Q}^{\infty }(\tau _{s},\lceil \tau _{s}\rceil -\tau
_{s})\right) \right\} ;\tau _{s}>T;\tau _{s}<\tau _{A}\Bigg]\leq -\frac{%
I^{\ast }}{l}
\end{equation*}%
Since $l$ is arbitrarily close to 1, we have proved \eqref{eq5}.

Finally, we consider the case when $V$ has bounded support over $[0,M]$. Pick a small constant $a>0$, and consider the set of customers $\tilde{G}_k=\bar{Q}_{(T+(k-1)\delta-M)\vee0,T+k\delta}[T+(k-1)\delta-a,\infty]$ that consists of $G_k$ and a trapezoidal strip of width $a$ running through $(T+(k-1)\delta-a,0)$, $(T+(k-1)\delta,0)$, $((T+(k-1)\delta-M)\vee0,M\wedge(T+(k-1)\delta))$ and $((T+(k-1)\delta-M)\vee0,M\wedge(T+(k-1)\delta)-a)$. See Figure 5.

\begin{figure}[ht]
\centering
\subfigure{
\includegraphics[scale=.35]{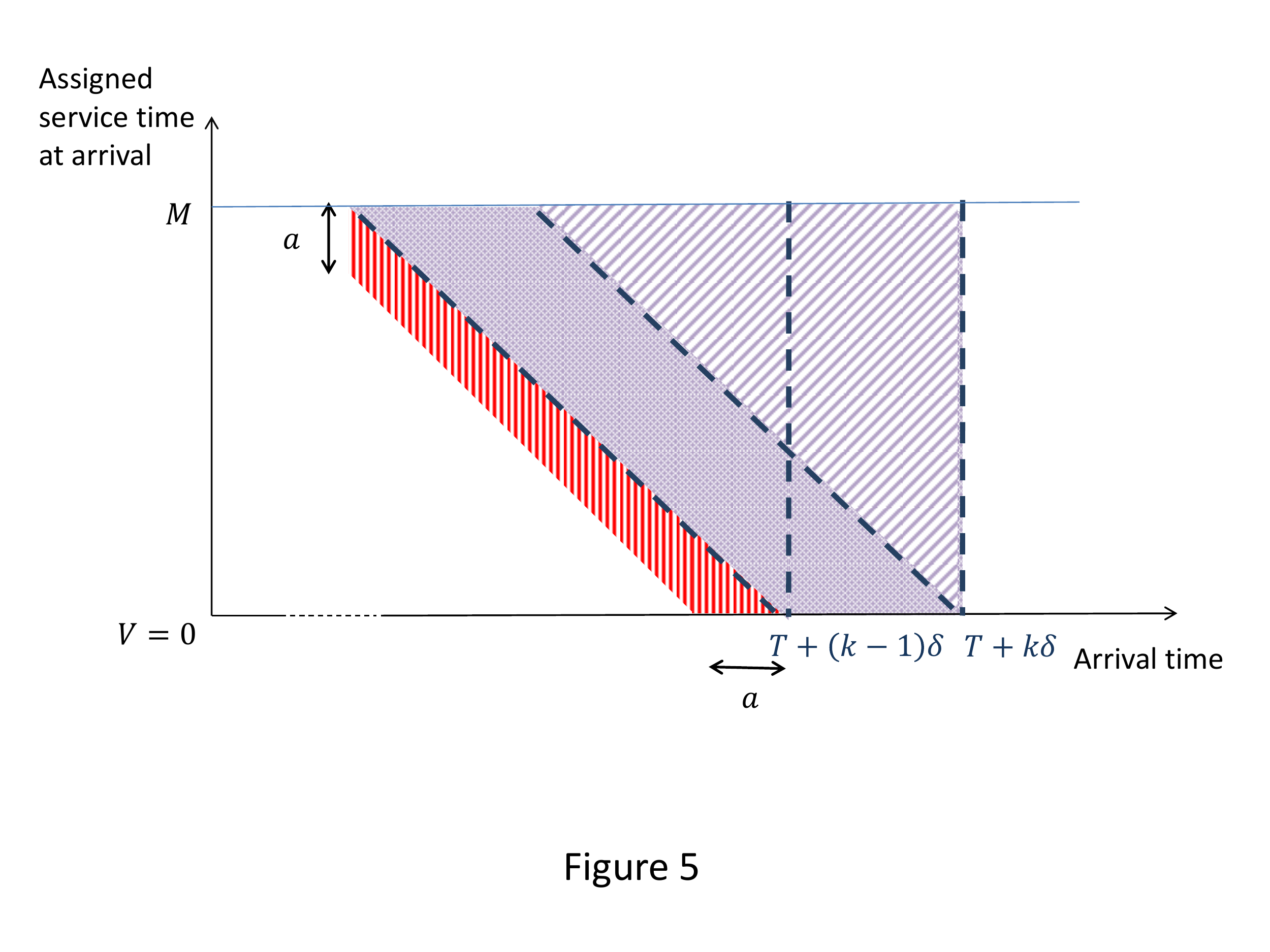}
}
\end{figure}

Denote $\tilde{A}_i^k,i=1,\ldots,\tilde{G}_k$ as the arrival times of customers falling in $\tilde{G}_k$. Then we have
\begin{eqnarray}
&&E_r[e^{l\theta_{T+k\delta}H_k};G_k>s-r(T+(k-1)\delta)] \notag\\
&\leq&E_r[e^{l\theta_{T+k\delta}H_k};\tilde{G}_k>s-r(T+(k-1)\delta)] \notag\\
&=&E_r[E_r[e^{l\theta_{T+k\delta}H_k}|\tilde{A}_i^k,i=1,\ldots,\tilde{G}_k];\tilde{G}_k>s-r(T+(k-1)\delta)] \notag\\
&=&E_r\left[\prod_{i=1}^{\tilde{G}_k}(1+(e^{l\theta_{T+k\delta}})\tilde{p}_i^k);\tilde{G}_k>s-r(T+(k-1)\delta)\right] \label{interim bounded1}
\end{eqnarray}
where
$$\tilde{p}_i^k=\frac{\bar{F}(T+(k-1)\delta-\tilde{A}_i^k)-\bar{F}(T+k\delta-\tilde{A}_i^k)}{\bar{F}(T+(k-1)\delta-a-\tilde{A}_i^k)}\leq \tilde{p}_k:=\sup_{i=1,\ldots,\tilde{G}_k}\tilde{p}_i^k\leq\frac{C\delta}{\bar{F}(M-a)}$$
Hence \eqref{interim bounded1} is less than or equal to
\begin{eqnarray}
&&E_r[e^{\log(1+(e^{l\theta_{T+k\delta}})\tilde{p}_k)\tilde{G}_k};\tilde{G}_k>s-r(T+(k-1)\delta)] \notag\\
&\leq&e^{-\theta_{T+(k-1)\delta}(s-r(T+(k-1)\delta))+\tilde{\psi}_k(\log(1+(e^{l\theta_{T+k\delta}}-1)\tilde{p})+\theta_{T+(k-1)\delta})} \label{interim bounded2}
\end{eqnarray}
where $\tilde{\psi}_k(\theta):=\log Ee^{\theta\tilde{G}_k}$, by Chernoff's inequality. Now note that by Lemma \ref{uniformity} we have
\begin{align*}
\tilde{\psi}_k(\theta)&=s\int_{(T+(k-1)\delta-M)\vee0}^{T+k\delta}\psi_N(\log(e^\theta\bar{F}(T+(k-1)\delta-a-u)+F(T+(k-1)\delta-a-u)))du+o(s)\\
&=s\int_0^{(M-a)\wedge(T+(k-1)\delta-a)}\psi_N(\log(e^\theta\bar{F}(u)+F(u)))du+s\psi_N(\theta)(a+\delta)+o(s)\\
&\leq s\psi_{T+(k-1)\delta}(\theta)+saC+o(s)
\end{align*}
for some constant $C>0$, uniformly in $\theta$ and $k$. Hence \eqref{interim bounded2} is less than or equal to
\begin{eqnarray*}
&&e^{-\theta_{T+(k-1)\delta}(s-r(T+(k-1)\delta))+s\psi_{T+(k-1)\delta}(\theta_{T+(k-1)\delta})+saC+o(s)}\\
&=&e^{-sI_{T+(k-1)\delta}+saC+o(s)}
\end{eqnarray*}
Thus \eqref{eq9} is less than or equal to
$$e^{-sI^*/l+saC/l+o(s)}\sum_{k=1}^{\infty }(E_{r}N_{A}^{2p})^{1/p}(E_{r}\tau
_{A}{}^{3q})^{1/q}(P_{r}(\tau _{A}>T+(k-1)\delta ))^{1/h}$$
This gives
$$\limsup_{s\rightarrow \infty }\frac{1}{s}\log E_{r}\Bigg[N_{A}^{2}\tau
_{s}{}^{3}\exp \left\{ \theta _{\lceil \tau _{s}\rceil }\left( sa_{\lceil
\tau _{s}\rceil }+1-\bar{Q}^{\infty }(\tau _{s},\lceil \tau _{s}\rceil -\tau
_{s})\right) \right\} ;\tau _{s}>T;\tau _{s}<\tau _{A}\Bigg]\leq -\frac{I^*}{l}+\frac{aC}{l}$$
Since $l$ and $a$ can be chosen arbitrarily close to 1 and 0 respectively, \eqref{eq5} holds and conclusion follows.

\end{proof}

%\bigskip

\begin{remark}
The proof can be simplified in the case of $M/G/s$ system. In particular,
there is no need to condition on $A_{i}^k$ nor introduce the constant $a$ in
the case of bounded support $V$. Since arrival is Poisson, the
two-dimensional description of arrivals via the arrival time and the
required service time at the time of arrival leads to a Poisson random
measure. Hence all the points in $G_k$ are independently sampled, each with
probability of falling into $H_k$ being
\begin{equation*}
p_k:=\frac{\int_{0}^{T+k\delta }(\bar{F}(T+(k-1)\delta -u)-\bar{F}(T+k\delta
-u))du}{\int_{0}^{T+k\delta }\bar{F}(T+(k-1)\delta -u)du}\leq\frac{C\delta
(M+\delta )}{\int_{0}^{T+(k-1)\delta }\bar{F}(u)du+N_{s}((k-1)\delta
,k\delta )}=O(\delta)
\end{equation*}%
for some constant $C>0$. Then \eqref{eq10} immediately becomes
\begin{eqnarray*}
&&E_{r}[(p_ke^{l\theta _{T+k\delta }}+1-p_k)^{G_k};G_k>s-r(T+(k-1)\delta)] \\
&=&E_{r}[e^{O(\delta )G_k};G_k>s-r(T+(k-1)\delta)]
\end{eqnarray*}%
The rest follows similarly as in the proof.
\end{remark}

%\bigskip

\begin{remark}
Note that the result coincides with Erlang's loss formula in the case of $%
M/G/s$ (see for example Asmussen (2003)), which states that the loss
probability is exactly given by
\begin{equation*}
P_{\pi }(\text{loss})=\frac{(\lambda sEV)^{s}/s!}{1+\lambda sEV+\cdots
+(\lambda sEV)^{s}/s!}
\end{equation*}%
Simple calculation reveals that $(1/s)\log P_{\pi }(\text{loss})\rightarrow
\log (\lambda EV)+1-\lambda EV=-I^{\ast }$.
\end{remark}

%\bigskip

The next result we will discuss is the lower bound:

\begin{thm}
For any $r(\cdot )\in J(\cdot )$, we have
\begin{equation*}
\liminf_{s\rightarrow \infty }\frac{1}{s}\log P_{r}(\tau _{s}<\tau _{A})\geq
-I^{\ast }
\end{equation*}%
\label{lower bound copy(1)}
\end{thm}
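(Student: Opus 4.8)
The plan is to establish the matching lower bound $\liminf_{s\to\infty}\frac{1}{s}\log P_r(\tau_s<\tau_A)\geq -I^*$ by a standard change-of-measure/second-moment argument, exploiting the coupling with the $GI/G/\infty$ system. Since $\{\tau_s<\tau_A\}\supset\{\tau_s\leq t,\ \tau_s<\tau_A\}$ for any fixed horizon $t$, and since before $\tau_s$ the $GI/G/s$ and the coupled $GI/G/\infty$ systems are identical, it suffices to bound $P_r(Q^\infty(t)>s,\ \tau_s<\tau_A)$ from below. First I would fix a large horizon $t$ (eventually sending $t\to\infty$ to exploit Lemma~\ref{rate} Part~2, $\lim_{t\to\infty}I_t=I^*$), and work under the tilted measure $\tilde P^t_r$ defined by \eqref{arrival tilt} and \eqref{service tilt}. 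Under $\tilde P^t_r$, the Gartner-Ellis/weak-law heuristic says $\bar Q^\infty(t)$ concentrates around $\psi_t'(\theta_t)s=a_ts$, i.e. around the overflow level $s+1-r(t)$, so $\tilde P^t_r(Q^\infty(t)>s)\to c>0$ (or at least is bounded below by a subexponential quantity); this is exactly the content behind Proposition~5, and the concentration can be read off from \eqref{increment property} together with the explicit form of $\psi_t'$.

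The key steps, in order, would be: (i) Write $P_r(\tau_s<\tau_A)\geq P_r(Q^\infty(t)>s,\ \tau_A>t)=\tilde E^t_r[L;\,Q^\infty(t)>s,\ \tau_A>t]$ where $L$ is the individual likelihood ratio in \eqref{L interim}/\eqref{L_t}; restrict further to a ``typical'' event $\mathcal{T}_s$ on which the Riemann sum $s\sum\psi_N(\cdots)U_i$ is close to $s\psi_t(\theta_t)$, the count $\bar Q^\infty(t)$ is close to $a_ts$, and $\tau_A>t$. (ii) On $\mathcal{T}_s$, lower-bound $L$ pointwise by $\exp\{s\psi_t(\theta_t)-\theta_t\bar Q^\infty(t)-o(s)\}\geq \exp\{-sI_t-o(s)\}$, using $I_t=\theta_t a_t-\psi_t(\theta_t)$ from \eqref{I_t} and that on $\mathcal{T}_s$ we have $\bar Q^\infty(t)\leq a_ts+o(s)$ (here I must be careful about the direction of the inequality for the Riemann sum, which is why I localize to a near-deterministic event rather than using the one-sided bound of Proposition~5). (iii) Show $\tilde P^t_r(\mathcal{T}_s)\to 1$, or at least $\liminf_s\frac{1}{s}\log\tilde P^t_r(\mathcal{T}_s)\geq 0$: concentration of the count and the Riemann sum follows from a law of large numbers under the tilted measure (the interarrival times under $\tilde P^t$ have means controlled by $1/(s\psi_N'(\cdots))$, summing to an $O(1)$ horizon), and $\tilde P^t_r(\tau_A>t)$ is bounded below because $r(\cdot)\in J(\cdot)$ is already in the recurrent band and staying in $A$ up to a fixed time $t$ is an event of non-vanishing probability even after tilting (one can use $o(1)$ spacing of the lattice points or a crude bound on how far the tilted $Q^\infty(\cdot,\cdot)$ process can drift from $J(\cdot)$ in time $t$). (iv) Conclude $\liminf_s\frac{1}{s}\log P_r(\tau_s<\tau_A)\geq -I_t$, then let $t\to\infty$ and invoke Lemma~\ref{rate} Part~2 to replace $I_t$ by $I^*$ (or, if $V$ has bounded support, take $t\geq M$ directly via Part~3).

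I expect the main obstacle to be step (iii), specifically showing that $\tilde P^t_r(\tau_A>t)$ does not decay exponentially in $s$. The tilted measure accelerates arrivals and inflates service times precisely to drive the system toward overflow, so one must argue that the departure from the confidence band $J(\cdot)$ incurred over a \emph{fixed} time horizon $t$ is only of order $\sqrt{s}$ (the width of $J$ being of order $\sqrt{s}\,\xi(y)$ with $\xi$ decaying slower than the steady-state standard deviation), hence an event of subexponential — in fact $O(1)$ — probability. This requires a uniform-in-$s$ control of the fluctuations of $\bar Q^\infty(u,y)-\tilde E^t_r[\bar Q^\infty(u,y)]$ for $u\leq t$ under $\tilde P^t_r$, which can be obtained from a functional CLT for the tilted point process, or more cheaply from a second-moment (Chebyshev) estimate on the tilted occupation counts combined with the fact that the tilted mean trajectory stays within $O(1)\cdot s$ of the fluid limit and within the band $J$ up to the overflow time. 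The remaining pieces — the pointwise lower bound on $L$ in step (ii) and the concentration of $\bar Q^\infty(t)$ in step (iii) — are routine once $\mathcal{T}_s$ is chosen to be the intersection of a near-deterministic event for $\bar Q^\infty(t)$, a near-deterministic event for the Riemann sum, and $\{\tau_A>t\}$, and I would defer the detailed verification of these concentration estimates (which parallels the uniformity arguments behind Lemma~\ref{uniformity}) to the appendix.
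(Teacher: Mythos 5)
Your outer structure (tilt toward overflow at a fixed horizon $t$, obtain the rate $-I_t$, then let $t\to\infty$ via Lemma \ref{rate} Part 2) agrees with the paper's, and steps (i)--(ii) are sound. The genuine gap is in step (iii), in your treatment of $\{\tau_A>t\}$: you have the requirement inverted. Since the cycle ends whenever the whole profile $Q(k\Delta,\cdot)$ lies inside the band $J(\cdot)$ at a lattice time, $\{\tau_A>t\}$ requires the process to be \emph{outside} the band at every lattice time $\Delta,2\Delta,\ldots$ before overflow --- not to ``stay in $A$'' or to ``not drift far from $J$'', which is what you argue. If the tilted process deviated from $J(\cdot)$ by only $O(\sqrt{s})$, as you assert, it would typically re-enter $A$ at the first lattice time (the band has half-width $\sqrt{s}\,C^{*}\xi(y)$, wider than the CLT scale), so $\tau_A$ would occur before $\tau_s$ and the event would be lost. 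What must actually be shown is that under the tilt the number in system exceeds the upper edge of $J(0)$ by order $s$ at every intermediate lattice time. This is true where $\bar{F}(t-u)$ is bounded away from $0$, but it degenerates exactly where the tilt becomes trivial: for instance, with bounded service times and $t>M$ the tilted dynamics on $[0,t-M)$ coincide with the original ones, so the process started in $J(\cdot)$ has no upward drift there, and the claim $\tilde{P}^{t}_{r}(\tau_A>t)\geq c>0$ is not automatic and certainly does not follow from a bound on how far the tilted process can wander from $J(\cdot)$.

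For comparison, the paper sidesteps this issue without any tilted-measure concentration argument: it discretizes the path into the block counts $\bar{Q}_{(k-1)\Delta,k\Delta}^{\infty}[(j-1)\Delta,j\Delta]$ and $\bar{Q}_{(k-1)\Delta,k\Delta}^{\infty}[n\Delta,\infty]$, applies the multivariate Gartner--Ellis lower bound (Lemma \ref{multivariate psi}) to an open box shifted slightly \emph{upward} from the optimal point $(\alpha_{kj},\beta_k)$ by margins $\gamma_{kj},\gamma_k>0$, and verifies deterministically (Lemma \ref{path property}) that every path in this box exceeds the band by order $s$ at all lattice times and exceeds $s$ at time $n\Delta$, hence satisfies $\tau_s<\tau_A$; continuity of the multivariate rate function (via the implicit function theorem) then lets the shift go to $0$, so its cost vanishes and the rate converges to $I_{n\Delta}$, after which $n\to\infty$ gives $I^{*}$. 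If you wish to retain your direct change-of-measure argument you would need an analogous device --- e.g., tilting slightly more aggressively or restricting to block counts slightly above their tilted means --- to force exit from, and non-return to, $A$ at all lattice times; as written, step (iii) does not establish this, and it is precisely the point where this first-passage-within-a-cycle problem differs from the fixed-time overflow estimate of Szechtman and Glynn.
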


%\bigskip

It suffices to prove that $\liminf_{s\rightarrow \infty }(1/s)\log
P_{r}(\tau _{s}<\tau _{A})\geq -I_{t_{n}}$ for a sequence $t_{n}\nearrow
\infty $ thanks to Lemma \ref{rate} Part 1 and 2. In fact we will take $%
t_{n}=n\Delta $. In the case of bounded support $V$, it suffices to only
consider $n\Delta=\lceil M\rceil$ because of Lemma \ref{rate} Part 3. For
each $n\Delta $, the idea then is to identify a so-called optimal sample
path (or more precisely a neighborhood of such path) that possesses a rate
function $I_{n\Delta }$ and has the property $\tau _{s}<\tau _{A}$. Note
that the probability in consideration is the same for $GI/G/s$ and $%
GI/G/\infty$ systems. Henceforth we would consider paths in $GI/G/\infty$.

The way we define $A$ in \eqref{A} implies that it suffices to focus on the
process on the time-grid $\{0,\Delta ,2\Delta ,\ldots \}$ for checking the
condition $\tau _{s}<\tau _{A}$. For a path to reach $s$ at time $n\Delta $,
the form of $\psi _{n\Delta }^{\prime }(\theta _{n\Delta })$ hints that $E[%
\bar{Q}_{(k-1)\Delta ,k\Delta }^{\infty }[(j-1)\Delta ,j\Delta
]|Q^\infty(n\Delta)>s]=s\alpha_{kj}+o(s)$ and $E[\bar{Q}_{(k-1)\Delta
,k\Delta }^{\infty }[n\Delta ,\infty ]|Q^\infty(n\Delta)>s]=s\beta_k+o(s)$
where
\begin{equation*}
\alpha _{kj}:=\int_{(k-1)\Delta }^{k\Delta }\psi _{N}^{\prime }(\log
(e^{\theta _{n\Delta }}\bar{F}(n\Delta -u)+F(n\Delta -u)))\frac{F(j\Delta
-u)-F((j-1)\Delta -u)}{e^{\theta _{n\Delta }}\bar{F}(n\Delta -u)+F(n\Delta
-u)}du
\end{equation*}
and
\begin{equation*}
\beta _{k}:=\int_{(k-1)\Delta }^{k\Delta }\psi _{N}^{\prime }(\log
(e^{\theta _{n\Delta }}\bar{F}(n\Delta -u)+F(n\Delta -u)))\frac{e^{\theta
_{n\Delta }}\bar{F}(n\Delta -u)}{e^{\theta _{n\Delta }}\bar{F}(n\Delta
-u)+F(n\Delta -u)}du
\end{equation*}
for $k=1,\ldots ,n$, $j=k,\ldots ,n$. Our goal is to rigorously justify that
such a path is the optimal sample path discussed above.

We now state two useful lemmas. The first is a generalization of Glynn
(1995), whose proof resembles this earlier work and is deferred to the
appendix. The second one argues that the path we identified indeed satisfies
$\tau_s<\tau_A$:

\begin{lemma}
Let $\mathbf{\Theta }=(\theta _{kj},\theta _{k\cdot })_{k=1,\ldots
,n,j=k,\ldots ,n}\in \mathbb{R}^{n(n+1)/2+n}$, and define
\begin{equation*}
\bar{\psi}(\mathbf{\Theta })=\sum_{k=1}^{n}\int_{(k-1)\Delta }^{k\Delta
}\psi _{N}\left( \log \left( \sum_{j=k}^{n}e^{\theta _{kj}}P((j-1)\Delta
-u<V\leq j\Delta -u)+e^{\theta _{k\cdot }}\bar{F}(n\Delta-u)\right) \right)
du
\end{equation*}%
We have
\begin{equation*}
\frac{1}{s}\log E\exp \left\{ \sum_{k=1}^{n}\left( \sum_{j=k}^{n}\theta _{kj}%
\bar{Q}_{(k-1)\Delta ,k\Delta }^{\infty }[(j-1)\Delta ,j\Delta ]+\theta
_{k\cdot }\bar{Q}_{(k-1)\Delta ,k\Delta }^{\infty }[n\Delta,\infty]\right)
\right\} \rightarrow \bar{\psi}(\mathbf{\Theta })
\end{equation*}
\label{multivariate psi}
\end{lemma}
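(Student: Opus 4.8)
The plan is to condition on the arrival process and then reduce the multivariate statement to the one-dimensional Gärtner--Ellis limit \eqref{increment property} by a Riemann-sum sandwich, in the spirit of Glynn (1995) and the derivation of $\psi_t(\cdot)$ in Section~2.2.

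\emph{Step 1: conditioning on arrivals.} Fix $\mathbf{\Theta}$ and condition on the arrival times $\{A_i\}$ (equivalently on the whole path of $N_s(\cdot)$). Given the arrivals, the service requirements $V_i$ are i.i.d.\ across customers, and a customer arriving at $A_i\in((k-1)\Delta,k\Delta]$ is assigned to exactly one of the counters $\bar{Q}^{\infty}_{(k-1)\Delta,k\Delta}[(j-1)\Delta,j\Delta]$ ($j=k,\dots,n$) or $\bar{Q}^{\infty}_{(k-1)\Delta,k\Delta}[n\Delta,\infty]$, according to a categorical law with cell probabilities $P((j-1)\Delta-A_i<V\le j\Delta-A_i)$ and $\bar{F}(n\Delta-A_i)$. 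Hence the conditional moment generating function factorizes over customers: writing $g_k(u):=\log\big(\sum_{j=k}^{n}e^{\theta_{kj}}P((j-1)\Delta-u<V\le j\Delta-u)+e^{\theta_{k\cdot}}\bar{F}(n\Delta-u)\big)$, and letting $\{\cdots\}$ denote the exponent in the statement of the lemma, one obtains
\[
\frac1s\log E\big[\exp\{\cdots\}\mid\{A_i\}\big]=\frac1s\sum_{k=1}^{n}\ \sum_{i:\,A_i\in((k-1)\Delta,k\Delta]}g_k(A_i).
\]
For $u\in((k-1)\Delta,k\Delta]$ the listed cells exhaust $\{V>(k-1)\Delta-u\}$, an event of probability $1$ since $(k-1)\Delta-u<0$; so the argument of the logarithm lies in $[e^{m_k},e^{M_k}]$ with $m_k=\min_j(\theta_{kj}\wedge\theta_{k\cdot})$ and $M_k=\max_j(\theta_{kj}\vee\theta_{k\cdot})$. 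Together with continuity of $F$ and $\bar{F}$ (here we use that $F$ has a density), this shows each $g_k$ is bounded and uniformly continuous on $[(k-1)\Delta,k\Delta]$, with range inside $\mathbb{R}=\mathrm{Dom}\,\psi_N$.

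\emph{Step 2: Riemann-sum sandwich.} The quantity of interest equals $\frac1s\log E\exp\{\sum_{k}\int_{(k-1)\Delta}^{k\Delta}g_k(u)\,dN_s(u)\}$. For $m\ge1$, partition each $((k-1)\Delta,k\Delta]$ into $m$ equal subintervals and let $g_k^{+,m}$ (resp.\ $g_k^{-,m}$) be the step function equal to the supremum (resp.\ infimum) of $g_k$ on each subinterval, so that $g_k^{-,m}\le g_k\le g_k^{+,m}$ pointwise and hence
\[
E\exp\Big\{\sum_k\int g_k^{-,m}\,dN_s\Big\}\ \le\ E\exp\Big\{\sum_k\int g_k\,dN_s\Big\}\ \le\ E\exp\Big\{\sum_k\int g_k^{+,m}\,dN_s\Big\}.
\]
Since $g_k^{\pm,m}$ are step functions, $\sum_k\int g_k^{\pm,m}\,dN_s$ is a linear combination of increments $N_s(t_i)-N_s(t_{i-1})$, so \eqref{increment property} applies and shows that $\frac1s\log$ of the outer expectations converges, as $s\to\infty$, to the corresponding upper/lower Riemann sums of $\psi_N\circ g_k$ (all arguments lie in $\mathrm{Dom}\,\psi_N$ by Step~1). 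By uniform continuity of each $g_k$ and continuity of $\psi_N$, both sums converge as $m\to\infty$ to $\sum_k\int_{(k-1)\Delta}^{k\Delta}\psi_N(g_k(u))\,du=\bar\psi(\mathbf{\Theta})$. Squeezing, and letting $s\to\infty$ for fixed $m$ before sending $m\to\infty$, yields the lemma.

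\emph{Main obstacle.} The only genuinely delicate point is the order of limits in the sandwich: one must pass $s\to\infty$ for fixed $m$ (invoking \eqref{increment property}) and only afterward let $m\to\infty$, the monotone bound $g_k^{-,m}\le g_k\le g_k^{+,m}$ supplying the uniformity in $s$ needed to close the estimate. The factorization in Step~1 and the Riemann-sum convergence are routine; the one extra bookkeeping item is the boundedness and uniform continuity of $g_k$ recorded above, which is what keeps $\psi_N(g_k(\cdot))$ finite and makes the limiting integral well defined.
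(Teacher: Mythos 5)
Your proof is correct and follows essentially the same route as the paper's: factorize the conditional expectation over customers given the arrivals to obtain $E\exp\{\sum_k\int_{(k-1)\Delta}^{k\Delta}g_k(u)\,dN_s(u)\}$, sandwich $g_k$ between step functions on a mesh of size $\Delta/m$, apply \eqref{increment property} for fixed $m$, and then let $m\to\infty$ using continuity. Your added remark that the cells exhaust an event of probability one (so the argument of $\psi_N$ stays in a compact subset of $\mathbb{R}$) is a correct and slightly more explicit justification of a point the paper handles via continuity of $h_k$.
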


%\bigskip

\begin{lemma}
Starting with any $r(\cdot)\in J(\cdot)$, the sample path with $%
Q_{(k-1)\Delta ,k\Delta }^{\infty }[(j-1)\Delta ,j\Delta ]\in ((\alpha
_{kj}+\gamma_{kj})s,(\alpha _{kj}+\epsilon )s)$, $Q_{(k-1)\Delta ,k\Delta
}^{\infty }[n\Delta,\infty]\in ((\beta _{k}+\gamma_k)s,(\beta _{k}+\epsilon
)s)$ for all $k=1,\ldots ,n$ and $j=k,\ldots ,n$ satisfies $\tau _{s}<\tau
_{A}$. Here $\gamma_{kj},\gamma_k>0$, $\sum_{\substack{ k=1,\ldots,n  \\ %
j=k,\ldots,n}}\gamma_{kj}+\sum_{k=1,\ldots,n}\gamma_k=\gamma<\infty$ and $%
\epsilon >\gamma_{kj},\epsilon>\gamma_k$. \label{path property}
\end{lemma}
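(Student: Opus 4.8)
The plan is to verify, for all $s$ large, two facts about any realization of the process whose slot counts lie in the prescribed windows: (I) overflow occurs by time $n\Delta$, so that $\tau_s\le n\Delta$; and (II) at every grid time $m\Delta$ with $1\le m\le n-1$ the state already violates the confidence band at the coordinate $y=0$, so that $W_{m\Delta}\notin A$. Granting these, and noting that $\sup J(0)=\lambda s EV+O(\sqrt{s})<s$ for $s$ large (here $\lambda EV<1$), so that $Q^{\infty}(t)>s$ forces $W_t\notin A$, we conclude that no grid time in $(0,\tau_s]$ lies in $A$, i.e.\ $\tau_A>\tau_s$. The recurring tool is the identity $Q^{\infty}(t,0)=r(t)+\bar{Q}^{\infty}(t)$ together with the slot decomposition
\[
\bar{Q}^{\infty}(m\Delta)=\sum_{k=1}^{m}\Bigl(\sum_{j=m+1}^{n}\bar{Q}^{\infty}_{(k-1)\Delta,k\Delta}[(j-1)\Delta,j\Delta]+\bar{Q}^{\infty}_{(k-1)\Delta,k\Delta}[n\Delta,\infty]\Bigr),
\]
valid for $1\le m\le n$ (empty inner sum at $m=n$), which just records that a customer arriving in a slot $k\le m$ is present at $m\Delta$ iff it departs after $m\Delta$.

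For (I) I take $m=n$: on the prescribed path $\bar{Q}^{\infty}(n\Delta)>\sum_{k=1}^{n}(\beta_k+\gamma_k)s$, and $\sum_{k=1}^{n}\beta_k=\psi'_{n\Delta}(\theta_{n\Delta})=a_{n\Delta}$ by the defining equation of $\theta_{n\Delta}$; since $r(\cdot)\in J(\cdot)$ gives $r(n\Delta)\ge(1-a_{n\Delta})s-\sqrt{s}C^*\xi(n\Delta)$, we obtain $Q^{\infty}(n\Delta)>s+(\sum_k\gamma_k)s-\sqrt{s}C^*\xi(n\Delta)>s$ once $s$ is large, because $\sum_k\gamma_k>0$ dominates the $O(\sqrt{s})$ term. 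For (II), fix $1\le m\le n-1$; the decomposition and the windows give $\bar{Q}^{\infty}(m\Delta)>b_m s+\delta_m s$, where $b_m:=\sum_{k=1}^{m}(\sum_{j>m}\alpha_{kj}+\beta_k)$ and $\delta_m:=\sum_{k=1}^{m}(\sum_{j>m}\gamma_{kj}+\gamma_k)>0$. Telescoping $\sum_{j=m+1}^{n}(F(j\Delta-u)-F((j-1)\Delta-u))=F(n\Delta-u)-F(m\Delta-u)$ in the definitions of $\alpha_{kj},\beta_k$ identifies $b_m$ with $\int_0^{m\Delta}\psi'_N(\log(e^{\theta_{n\Delta}}\bar{F}(n\Delta-u)+F(n\Delta-u)))\bigl(1-\tfrac{F(m\Delta-u)}{e^{\theta_{n\Delta}}\bar{F}(n\Delta-u)+F(n\Delta-u)}\bigr)\,du$, the normalized tilted expected number of post-zero arrivals present at $m\Delta$. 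Since $\theta_{n\Delta}>0$ forces $e^{\theta_{n\Delta}}\bar{F}+F\ge1$, the first factor is $\ge\psi'_N(0)=\lambda$ and the second is $\ge\bar{F}(m\Delta-u)$, with strict inequality on a set of positive measure, so $b_m>\lambda\int_0^{m\Delta}\bar{F}(u)\,du$. Adding $r(m\Delta)\ge\lambda s\int_{m\Delta}^{\infty}\bar{F}(u)\,du-\sqrt{s}C^*\xi(m\Delta)$ yields $Q^{\infty}(m\Delta)>\lambda s EV+(b_m-\lambda\int_0^{m\Delta}\bar{F}+\delta_m)s-\sqrt{s}C^*\xi(m\Delta)$, whose $O(s)$ surplus is strictly positive; hence $Q^{\infty}(m\Delta)>\lambda s EV+\sqrt{s}C^*\xi(0)=\sup J(0)$ for $s$ large, so $Q^{\infty}(m\Delta,0)\notin J(0)$ and $W_{m\Delta}\notin A$. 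As only finitely many $m$ are involved, a single threshold $s_0$ (depending on $n,\Delta,C^*$ and the $\gamma$'s) works for all of them.

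The step I expect to be the real obstacle is the strict inequality $b_m>\lambda\int_0^{m\Delta}\bar{F}$: it is the quantitative form of the intuition that the optimal tilt can only push the number in system above its stationary level, and it is exactly what forces the optimal path off $A$ already at time $\Delta$ rather than only eventually. The delicate point is that the tilt is genuinely active ($e^{\theta_{n\Delta}}\bar{F}(n\Delta-u)+F(n\Delta-u)>1$) only when $\bar{F}(n\Delta-u)>0$; for unbounded $V$ this is every $u\in(0,m\Delta)$, whereas for bounded $V$ one invokes the reduction to $n\Delta=\lceil M\rceil$ (Lemma \ref{rate} Part 3) so that $(n-m)\Delta<M$ and the active set $\{u<m\Delta:\bar{F}(n\Delta-u)>0\}$ still has positive measure. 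Everything else — the slot accounting, the two uses of $r(\cdot)\in J(\cdot)$, and the comparison of the $O(s)$ surplus against the $O(\sqrt{s})$ band width — is routine.
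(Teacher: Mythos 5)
Your proposal is correct and follows essentially the same route as the paper's proof: the same slot decomposition of $\bar{Q}^{\infty}(m\Delta)$, the same telescoping of the $\alpha_{kj}$'s with $\beta_k$, the bound $\psi_N'(\log(e^{\theta_{n\Delta}}\bar{F}+F))\geq\lambda$ together with the fraction being $\geq\bar{F}(m\Delta-u)$, the two uses of $r(\cdot)\in J(\cdot)$, and the $\gamma$-terms supplying the $O(s)$ surplus that beats the $O(\sqrt{s})$ band width, with overflow read off at $n\Delta$ exactly as in the paper. The only comment is that the step you single out as the "real obstacle" — strictness of $b_m>\lambda\int_0^{m\Delta}\bar{F}(u)\,du$ — is not needed (and the paper does not use it): the weak inequality plus $\delta_m>0$ already makes your displayed surplus $(b_m-\lambda\int_0^{m\Delta}\bar{F}+\delta_m)s$ strictly of order $s$, so the bounded-support discussion of the active set is superfluous.
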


%\bigskip

\begin{proof} For $l=1,\ldots ,n$, consider
\begin{eqnarray*}
\bar{Q}^{\infty }(l\Delta ) &=&\sum_{k=1}^{l}Q_{(k-1)\Delta ,k\Delta
}^{\infty }[l\Delta ,\infty] \\
&>&\sum_{k=1}^{l}\left( \sum_{j=l+1}^{n}a_{kj}s+b_{k}s\right)+\sum_{k=1}^{l}\left(\sum_{j=l+1}^{n}\gamma_{kj}s+\gamma_ks\right)\\
&=&s\sum_{k=1}^{l}\Bigg(\sum_{j=l+1}^{n}\int_{(k-1)\Delta }^{k\Delta }\psi
_{N}^{\prime }(\log (e^{\theta _{n\Delta}}\bar{F}(n\Delta-u)+F(n\Delta-u)))\frac{F(j\Delta
-u)-F((j-1)\Delta -u)}{e^{\theta _{n\Delta}}\bar{F}(n\Delta-u)+F(n\Delta-u)}du{} \\
&&{}+\int_{(k-1)\Delta }^{k\Delta }\psi _{N}^{\prime }(\log (e^{\theta _{n\Delta}}%
\bar{F}(n\Delta-u)+F(n\Delta-u)))\frac{e^{\theta _{n\Delta}}\bar{F}(n\Delta-u)}{e^{\theta _{n\Delta}}\bar{F%
}(n\Delta-u)+F(n\Delta-u)}du\Bigg){} \\
&&{}+s\sum_{k=1}^{l}\left(\sum_{j=l+1}^{n}\gamma_{kj}+\gamma_k\right)\\
&=&s\int_{0}^{l\Delta }\psi _{N}^{\prime }(\log (e^{\theta _{n\Delta}}\bar{F}%
(n\Delta-u)+F(n\Delta-u)))\frac{e^{\theta _{n\Delta}}\bar{F}(n\Delta-u)+F(n\Delta-u)-F(l\Delta -u)}{%
e^{\theta _{n\Delta}}\bar{F}(n\Delta-u)+F(n\Delta-u)}du{}\\
&&{}+s\sum_{k=1}^{l}\left(\sum_{j=l+1}^{n}\gamma_{kj}+\gamma_k\right)\\
&>&\lambda s\int_{0}^{l\Delta }\bar{F}(l\Delta -u)du+C_1\sqrt{s}
\end{eqnarray*}%
for any given constant $C_1$, when $s$ is large enough. The last inequality follows from the monotonicity of $\psi_N'$. Note that we then have $%
Q^{\infty }(l\Delta )=\bar{Q}^{\infty }(l\Delta )+r(l\Delta )>\lambda
s+C_{2}\sqrt{s}$ for any given constant $C_{2}$ and large enough $s$. Hence $%
\tau _{A}$ is not reached in time $n\Delta$ when $s$ is large.

On the other hand,
\begin{eqnarray*}
\bar{Q}^{\infty }(n\Delta)& =&\sum_{k=1}^{n}Q_{(k-1)\Delta ,k\Delta }^{\infty }[n\Delta,\infty]
\\
& >&\sum_{k=1}^{n}\beta _{k}s+\sum_{k=1}^n\gamma_ks\\
& =&s\sum_{k=1}^{m}\int_{(k-1)\Delta }^{k\Delta }\psi _{N}^{\prime }(\log
(e^{\theta _{n\Delta}}\bar{F}(n\Delta-u)+F(n\Delta-u)))\frac{e^{\theta _{n\Delta}}\bar{F}(n\Delta-u)}{%
e^{\theta _{n\Delta}}\bar{F}(n\Delta-u)+F(n\Delta-u)}du +s\sum_{k=1}^n\gamma_k{}\\
& =&s\int_{0}^{n\Delta}\psi _{N}^{\prime }(\log (e^{\theta _{n\Delta}}\bar{F}%
(n\Delta-u)+F(n\Delta-u)))\frac{e^{\theta _{n\Delta}}\bar{F}(n\Delta-u)}{e^{\theta _{n\Delta}}\bar{F}%
(n\Delta-u)+F(n\Delta-u)}du+s\sum_{k=1}^n\gamma_k\\
& =&s\psi_{n\Delta}'(\theta_{n\Delta})+s\sum_{k=1}^n\gamma_k
\end{eqnarray*}%
where the last equality follows from the definition of $\theta _{n\Delta}$. So $Q^\infty(n\Delta)=\bar{Q}^\infty(n\Delta)+r(n\Delta)>s$ when $s$ is large enough. This concludes our proof.

\end{proof}

%\bigskip

We now prove Theorem \ref{lower bound copy(1)}:

\begin{proof}[Proof of Theorem \ref{lower bound copy(1)}]
Note that by Lemma \ref%
{path property}, for any $r(\cdot)\in J(\cdot)$ and $s$ large enough,
\begin{eqnarray}
&&P_{r}(\tau _{s}<\tau _{A})  \notag \\
&\geq &P_{r}(Q_{(k-1)\Delta ,k\Delta }^{\infty }[(j-1)\Delta ,j\Delta ]\in
((\alpha _{kj}+\gamma_{kj})s,(\alpha _{kj}+\epsilon )s),\ Q_{(k-1)\Delta ,k\Delta
}^{\infty }[n\Delta,\infty]\in ((\beta _{k}+\gamma_k)s,(\beta _{k}+\epsilon )s),{}  \notag \\
&&{}k=1,\ldots ,n,\ j=k,\ldots ,n)  \label{confined prob}
\end{eqnarray}%
for large enough $s$ given arbitrary $\gamma_{kj}$, $\gamma_k$ and $\epsilon$ satisfying conditions in Lemma \ref{path property}. Denote $\mathbf{\Gamma}=(\gamma_{kj},\gamma_k)_{k=1,\ldots,n,\ j=k,\ldots,n}$. Let
\begin{equation*}
S_{\mathbf{\Gamma}}=\prod_{k=1}^{n}\prod_{j=k}^{n}(\alpha _{kj}+\gamma_{kj},\alpha _{kj}+\epsilon )\times
\prod_{k=1}^{n}(\beta _{k}+\gamma_k,\beta _{k}+\epsilon )\subset \mathbb{R}%
^{n(n+1)/2+n}
\end{equation*}%
Using Gartner-Ellis Theorem for \eqref{confined prob} and Lemma \ref{multivariate psi}, we have
\begin{eqnarray}
&&\frac{1}{s}\log P_{r}(Q_{(k-1)\Delta ,k\Delta }^{\infty }[(j-1)\Delta
,j\Delta ]\in ((\alpha _{kj}+\gamma_{kj})s,(\alpha _{kj}+\epsilon )s),{} \notag\\
&&Q_{(k-1)\Delta,k\Delta }^{\infty }[n\Delta,\infty]\in ((\beta _{k}+\gamma_k)s,(\beta _{k}+\epsilon )s),\ k=1,\ldots ,n,\ j=k,\ldots ,n) \notag\\
&\rightarrow &-I_{\mathbf{\Gamma}} \label{lower Gartner-Ellis}
\end{eqnarray}%
where $I_{\mathbf{\Gamma}}=\inf_{\mathbf{x}\in S_{\mathbf{\Gamma}}}I(\mathbf{x})$ and
\begin{equation*}
I(\mathbf{x})=\sup_{\mathbf{\Theta }\in \mathbb{R}^{n(n+1)/2+n}}\{\langle
\mathbf{\Theta },\mathbf{x}\rangle -\bar{\psi}(\mathbf{\Theta })\}
\end{equation*}%
with $\bar{\psi}(\mathbf{\Theta })$ defined in Lemma \ref{multivariate psi}. But note that for $k=1,\ldots ,n$, $j=k,\ldots ,n$,
\begin{eqnarray}
\frac{\partial }{\partial \theta _{kj}}(\langle \mathbf{\Theta },\mathbf{x}%
\rangle -\bar{\psi}(\mathbf{\Theta })) &=&x_{kj}-\int_{(k-1)\Delta
}^{k\Delta }\psi _{N}^{\prime }\left( \log \left( \sum_{j=k}^{n}e^{\theta
_{kj}}P((j-1)\Delta -u<V\leq j\Delta -u)+e^{\theta _{k\cdot }}\bar{F}%
(n\Delta-u)\right) \right) {}  \notag \\
&&{}\frac{e^{\theta _{kj}}P((j-1)\Delta -u<V\leq j\Delta -u)}{%
\sum_{j=k}^{m}e^{\theta _{kj}}P((j-1)\Delta -u<V\leq j\Delta -u)+e^{\theta
_{k\cdot }}\bar{F}(n\Delta-u)}du  \label{FOC1} \\
\frac{\partial }{\partial \theta _{k}}(\langle \mathbf{\Theta },\mathbf{x}%
\rangle -\bar{\psi}(\mathbf{\Theta })) &=&x_{k}-\int_{(k-1)\Delta }^{k\Delta
}\psi _{N}^{\prime }\left( \log \left( \sum_{j=k}^{n}e^{\theta
_{kj}}P((j-1)\Delta -u<V\leq j\Delta -u)+e^{\theta _{k\cdot }}\bar{F}%
(n\Delta-u)\right) \right) {}  \notag \\
&&{}\frac{e^{\theta _{k}}\bar{F}(n\Delta-u)}{\sum_{j=k}^{m}e^{\theta
_{kj}}P((j-1)\Delta -u<V\leq j\Delta -u)+e^{\theta _{k\cdot }}\bar{F}(n\Delta-u)}du
\label{FOC2}
\end{eqnarray}%
Define $\mathbf{x}^{\ast }=(\alpha _{kj},\beta _{k})_{k=1,\ldots ,n,\
j=k,\ldots ,n}$. For $\mathbf{x}=\mathbf{x}^{\ast }$, it is straightforward
to verify that $\Theta ^{\ast }=(\theta _{kj}^{\ast },\theta _{k\cdot
}^{\ast })$ where $\theta _{kj}^{\ast }=0,\theta _{k\cdot }^{\ast }=\theta
_{n\Delta}$ for $k=1,\ldots ,n$, $j=k,\ldots ,n$ satisfies \eqref{FOC1} and %
\eqref{FOC2}. Since $\langle \Theta ,\mathbf{x}\rangle -\bar{\psi}(\Theta )$
is concave in $\mathbf{\Theta }$, we have
\begin{eqnarray*}
I(\mathbf{x}^{\ast }) &=&\langle \Theta ^{\ast },\mathbf{x}^{\ast }\rangle -%
\bar{\psi}(\Theta ^{\ast }) \\
&=&\theta _{n\Delta}\sum_{k=1}^{n}\beta _{k}-\sum_{k=1}^{n}\int_{(k-1)\Delta
}^{k\Delta }\psi _{N}(\log (F(n\Delta-u)-F((k-1)\Delta -u)+e^{\theta _{n\Delta}}\bar{F}%
(n\Delta-u)))du \\
&=&\theta _{n\Delta}\psi _{n\Delta}^{\prime }(\theta _{n\Delta})-\psi _{n\Delta}\left( \theta
_{n\Delta}\right) \\
&=&I^{\ast }
\end{eqnarray*}%
Now since $\langle \Theta ,\mathbf{x}\rangle -\bar{\psi}(\Theta )$ is
continuously differentiable in $\Theta $ and $\mathbf{x}$, by Implicit
Function Theorem, $I(\mathbf{x})$ is continuous in $\mathbf{x}$. This implies that
$$I_{\mathbf{\Gamma}}\leq I(\mathbf{x}^*+\mathbf{\Gamma})\to I(\mathbf{x}^*)=I^*$$
as $\mathbf{\Gamma}\to0$. Together with \eqref{confined prob}
and \eqref{lower Gartner-Ellis} gives the conclusion.

\end{proof}

%\bigskip

Theorems \ref{upper bound copy(1)} and \ref{lower bound copy(1)} together
imply both the asymptotic optimality of Algorithm 2 and the large deviations
of the loss probability:

\begin{proof}[Proof of Theorem \ref{main thm}]
Note that by Jensen's inequality
\begin{equation*}
P_{r}(\tau _{s}<\tau _{A})^{2}\leq (E_{r}N_{A})^{2}\leq \tilde{E}%
_{r}[N_{A}^{2}L^{2}]
\end{equation*}%
Hence using Theorems \ref{upper bound copy(1)} and \ref{lower bound copy(1)}
yields
\begin{equation*}
-2I^{\ast }\leq \lim_{s\rightarrow \infty }\frac{1}{s}\log P_{r}(\tau
_{s}<\tau _{A})^{2}\leq \lim_{s\rightarrow \infty }\frac{1}{s}\log
(E_{r}N_{A})^{2}\leq \lim_{s\rightarrow \infty }\frac{1}{s}\log \tilde{E}%
_{r}[N_{A}^{2}L^{2}]\leq -2I^{\ast }
\end{equation*}%
Combining Proposition \ref{asymptotic}, we conclude that the steady-state loss
probability given by \eqref{Kac} decays exponentially with rate $I^{\ast }$
and that Algorithm 2 is asymptotically optimal.
\end{proof}

\bigskip

\section{Logarithmic Estimate of Return Time}

In this section we will lay out the argument for Proposition \ref{asymptotic}%
. The first step is to reduce the problem to a $GI/G/\infty $ calculation.
Define $x(t):=\sup \{y:Q^{\infty }(t,y)>0\}$ as the maximum residual service
times among all customers present at time $t$.

\begin{lemma}
We have $\tau _{A}\leq \tau _{A}^{\prime }$ where
\begin{equation*}
\tau _{A}^{\prime }=\inf \{t\in \{\Delta ,2\Delta ,\ldots \}:x(t-u)\leq l,\
Q^{\infty }(w)<s\text{\ for\ }w\in \lbrack t-u,t]\text{\ for some\ }u>l,\
Q^{\infty }(t,\cdot )\in J(\cdot )\}
\end{equation*}%
\label{coupling} for any $l>0$.
\end{lemma}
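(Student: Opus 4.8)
The plan is to show that, under the coupling with the $GI/G/\infty $ system, at the lattice time $t:=\tau _{A}^{\prime }$ the $GI/G/s$ configuration $Q(t,\cdot )$ coincides exactly with $Q^{\infty }(t,\cdot )$. Since the very definition of $\tau _{A}^{\prime }$ already requires $Q^{\infty }(t,\cdot )\in J(\cdot )$, this identity forces $Q(t,\cdot )\in J(\cdot )$, hence $W_{t}\in A$ (recall that $A$ in \eqref{A} constrains only the $Q$-component and not the age $B$). Because $\tau _{A}^{\prime }\in \{\Delta ,2\Delta ,\ldots \}$ and $\tau _{A}$ is by definition the first time in $\{\Delta ,2\Delta ,\ldots \}$ at which the chain is in $A$, this yields $\tau _{A}\leq \tau _{A}^{\prime }$. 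So everything reduces to verifying the configuration identity at time $t$, and this is where the two conditions built into $\tau _{A}^{\prime }$ are used.

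Fix $l>0$, assume $\tau _{A}^{\prime }<\infty $, and let $u>l$ be a witnessing time, so that $x(t-u)\leq l$ and $Q^{\infty }(w)<s$ for all $w\in \lbrack t-u,t]$. First I would establish that the two systems are perfectly coupled on $(t-u,t]$, i.e.\ no loss occurs there and every customer arriving in that window is served identically in both systems: since $Q^{\infty }$ is integer-valued and right-continuous, $Q^{\infty }(w)\leq s-1$ on $[t-u,t]$ forces $Q^{\infty }(w^{-})\leq s-1$ for $w\in (t-u,t]$, and since the $GI/G/s$ system is the labeled-server subsystem of the coupled $GI/G/\infty $ system one also has $Q(w^{-})\leq Q^{\infty }(w^{-})\leq s-1$; hence a customer arriving at such a $w$ finds at most $s-1$ busy servers, so a free labeled server, and enters both systems with the same service time. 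Second, I would use $x(t-u)\leq l<u$: every customer present at time $t-u$ in the $GI/G/\infty $ system, and a fortiori every customer present at $t-u$ in the $GI/G/s$ system, has residual service time at most $l$, hence departs by time $(t-u)+l<t$ and is absent from both systems at time $t$. Combining the two observations, the customers in service at time $t$ are, in both systems, exactly those who arrived during $(t-u,t]$ and have not yet completed service, each carrying the same coupled service time; therefore $Q(t,y)=Q^{\infty }(t,y)$ for every $y\geq 0$, which is the desired identity. (The case of an empty system at $t$ does not even arise, since then $Q^{\infty }(t,\cdot )\equiv 0\notin J(\cdot )$ and $t$ would not qualify as $\tau _{A}^{\prime }$.)

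The main obstacle I anticipate is the bookkeeping at arrival and departure epochs in the last step: one must check that equality of the \emph{sets} of customers in service at time $t$ upgrades to the measure-valued identity $Q(t,\cdot )=Q^{\infty }(t,\cdot )$ rather than merely to equality of headcounts — this is immediate once each shared customer is known to carry the same arrival time and service time under the coupling — and one must dispose of the boundary customer arriving exactly at $t-u$, which is counted among those present at $t-u$, so has service time at most $x(t-u)\leq l$ and leaves before $t$ regardless of whether it ever occupied a labeled server, hence does not affect the configuration at $t$. Given the strict inequalities $l<u$ and $Q^{\infty }(\cdot )<s$ on $[t-u,t]$, I expect these verifications to be routine, so the coupling argument of the preceding paragraph is the substance of the proof.
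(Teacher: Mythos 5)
Your proposal is correct and follows essentially the same argument as the paper: use the coupling to get $Q\leq Q^{\infty}$, deduce from $Q^{\infty}(w)<s$ on $[t-u,t]$ that no arrival in that window is lost in either system, use $x(t-u)\leq l<u$ to conclude that all customers present at $t-u$ have departed by $t$, and hence $Q(t,\cdot)\equiv Q^{\infty}(t,\cdot)\in J(\cdot)$ so the chain is in $A$ at the lattice time $\tau_{A}^{\prime}$. Your write-up just carries out the boundary bookkeeping (right-continuity, the customer arriving exactly at $t-u$) more explicitly than the paper does.
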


%\bigskip

\begin{proof} The way we couple the $GI/G/\infty $ system implies that at any
point of time the number of customers in the $GI/G/s$ system is at most that
of the coupled $GI/G/\infty $ system (in fact the served customers in the $GI/G/s$
system is a subset of those in $GI/G/\infty $). Suppose at time $t-u$ we
have $Q^{\infty }(t-u)<s$ and $x(t-u)<l$. Then $Q^{\infty }(w)<s$ for $w\in
\lbrack t-u,t]$ means that all the arrivals in this interval are not lost
i.e. they all get served in both the $GI/G/\infty$ and the $GI/G/s$ system. Since $x(t-u)\leq l$, all
the customers present at time $t$ come from arrivals after time $t-u$. This
implies that $Q(t,\cdot )\equiv Q^{\infty }(t,\cdot )$. Hence the result of
the lemma.
\end{proof}
%\bigskip

The next step is to find a mechanism to identify the instant $t-u$ and set
an appropriate value for $l$ so that $\tau_A^{\prime }$ is small. We use a
geometric trial argument. Divide the time frame into blocks separated at $%
T_{0}=0,T_{1},T_{2},\ldots $ in such a way that (1) a \textquotedblleft
success" in the block would mean $\tau _{A}^{\prime }$ is reached before the
end of the block (2) $\{W_{u},T_{i}<u\leq T_{i+1}\},i=0,1,\ldots $ are
roughly independent. We then estimate the probability of \textquotedblleft
success" in a block and also the length of a block to obtain a bound for $%
\tau _{A}^{\prime }$.

At this point let us also introduce a fixed constant $t_{0}$ and state the
following result:

\begin{lemma}
For any fixed $t_{0}>0$.
\begin{equation}
P\left( \bar{Q}^{\infty }(t,y)\in \left( \lambda s\int_{y}^{t+y}\bar{F}%
(u)du\pm \sqrt{s}C_{1}\nu(y)\right) \text{\ for all\ }t\in \lbrack
0,t_{0}],\ y\in \lbrack 0,\infty )\Bigg|B(0)\right) \geq C_{2}>0 \label{CLT2}
\end{equation}
and%
\begin{equation}
P\left( \bar{Q}^{\infty }(t,y)\notin \left( \lambda s\int_{y}^{t+y}\bar{F}%
(u)du\pm \sqrt{s}C_{1}\nu(y)\right) \text{\ for some\ }t\in \lbrack
0,t_{0}],\ y\in \lbrack 0,\infty )\Bigg|B(0)\right) \geq C_{3}>0 \label{CLT3}
\end{equation}
for large enough $C_{1}>0$ and some constants $C_{2}$ and $C_{3}$, all
independent of $s$, uniformly for all initial age $B(0)$. $\nu(y)$ is
defined in \eqref{nu}. \label{CLT1}
\end{lemma}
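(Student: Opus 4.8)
The plan is to collapse both assertions into a single tightness statement for the recentred, rescaled process. Write $\hat{Q}_s(t,y):=s^{-1/2}\big(\bar{Q}^{\infty}(t,y)-\lambda s\textstyle\int_y^{t+y}\bar{F}(u)\,du\big)$ and $N_s:=\sup_{0\le t\le t_0,\;y\ge0}\abs{\hat{Q}_s(t,y)}/\nu(y)$ (which is a.s.\ finite, since for each $s$ the numerator vanishes in the $\nu$-weighted sense as $y\to\infty$). Then \eqref{CLT2} is exactly the statement $P(N_s<C_1\mid B(0))\ge C_2$, which holds for all large $C_1$ once $\{N_s\}$ — indexed by $s$ \emph{and} by the initial age $B(0)$ — is shown to be tight, i.e.\ $\sup_{s,\,B(0)}P(N_s>C_1\mid B(0))\to0$ as $C_1\to\infty$. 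Conversely \eqref{CLT3} asserts that $P(N_s\ge C_1\mid B(0))$ stays bounded away from $0$; since $V$ has a density one can fix $y^{\ast}\ge0$ with $\operatorname{Var}\bar{Q}^{\infty}(t_0,y^{\ast})\asymp s$, so that $\bar{Q}^{\infty}(t_0,y^{\ast})$ obeys a nondegenerate central limit theorem — uniformly in $B(0)$, because $B(0)$ affects the post-$0$ arrival stream only through the residual first interarrival time $U_0$, which under our exponential-moment assumption on interarrivals is of order $1/s$ uniformly in $B(0)$. Hence $P(\abs{\hat{Q}_s(t_0,y^{\ast})}\ge C_1\nu(y^{\ast})\mid B(0))$ converges, uniformly in $B(0)$, to a strictly positive limit independent of $B(0)$; as $N_s\ge\abs{\hat{Q}_s(t_0,y^{\ast})}/\nu(y^{\ast})$, and the remaining (small) values of $s$ are handled directly, \eqref{CLT3} follows with a $C_3>0$ independent of $s$ once a large $C_1$ has been fixed.

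The crux is therefore the uniform maximal second-moment bound
\[
E\Big[\sup_{0\le t\le t_0}\big|\bar{Q}^{\infty}(t,y)-\lambda s\textstyle\int_y^{t+y}\bar{F}(u)\,du\big|^{2}\Big]\;\le\;C\,s\int_y^{\infty}\bar{F}(u)\,du,
\]
with $C$ independent of $y\ge0$, of $s\ge1$, and of the initial age $B(0)$. To obtain it I would use the martingale (sequential empirical) representation of the measure-valued $GI/G/\infty$ descriptor from Krichagina and Puhalskii (1997) and Pang and Whitt (2009): $\bar{Q}^{\infty}(t,y)$ minus its compensator is a martingale in $t$, the compensator equals $\lambda s\int_y^{t+y}\bar{F}(u)\,du$ up to an $O(1)$ term controlled by a renewal-function estimate that is uniform in $B(0)$ (this is where the exponential-moment hypothesis on interarrivals is used, to bound the renewal function of the delayed arrival process uniformly over all ages), and Doob's $L^2$-inequality together with the elementary bounds $\bar{F}^{2}\le\bar{F}$, $F\bar{F}\le\bar{F}$ turns the terminal variance $\asymp s\int_y^{\infty}\bar{F}(u)^{2}du+s\int_y^{\infty}F(u)\bar{F}(u)\,du$ into the displayed right-hand side.

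Granting this bound, tightness of $N_s$ follows by a Chebyshev-plus-grid argument. For fixed $y$, $P\big(\sup_{t\le t_0}\abs{\hat{Q}_s(t,y)}>C_1\nu(y)\mid B(0)\big)\le C\,s\int_y^{\infty}\bar{F}/(sC_1^{2}\nu(y)^{2})=C'\,\big(\int_y^{\infty}\bar{F}(u)\,du\big)^{\eta/(2+\eta)}/C_1^{2}$, since $\nu(y)^{2}=(\lambda\int_y^{\infty}\bar{F})^{2/(2+\eta)}$. Because $\bar{F}$ decays faster than any power by \eqref{light-tail assumption}, $\big(\int_y^{\infty}\bar{F}\big)^{\eta/(2+\eta)}$ decays faster than any power of $y$, so the sum of these probabilities over a grid $y\in\{0,h,2h,\dots\}$ is finite and $\to0$ as $C_1\to\infty$; one passes from the grid to all $y\ge0$ using the monotonicity in $y$ of $\bar{Q}^{\infty}(t,\cdot)$, of $\lambda s\int_{\cdot}^{t+\cdot}\bar{F}$ and of $\nu(\cdot)$, refining the grid if necessary. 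All bounds are uniform in $s$ and $B(0)$ because the moment bound is, which yields $\sup_{s,\,B(0)}P(N_s>C_1\mid B(0))\to0$, i.e.\ \eqref{CLT2}. Note that it is precisely the choice $\eta>0$ that makes $\sqrt{\int_y^{\infty}\bar{F}(u)\,du}/\nu(y)=\lambda^{-1/(2+\eta)}\big(\int_y^{\infty}\bar{F}\big)^{\eta/(2(2+\eta))}$ bounded, so the $\nu$-weighted supremum over the unbounded $y$-axis is finite; for bounded service times $\bar{F}$ is compactly supported and $\eta=0$ suffices, consistent with the discussion following \eqref{nu}.

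The main obstacle is the displayed maximal moment bound and, specifically, its uniformity \emph{simultaneously} in the residual-service level $y$ and in the initial age $B(0)$: this needs the martingale decomposition of the measure-valued infinite-server process together with a renewal estimate for a delayed renewal process that is uniform over all ages — exactly what the exponential-moment assumption on the interarrival distribution buys — whereas the weak-convergence statements of Pang and Whitt give this kind of control only for $B(0)$ in a bounded set and $y$ in a compact set. A secondary, routine nuisance is the passage from a $y$-grid to the continuum inside $N_s$, dispatched by the monotonicities noted above.
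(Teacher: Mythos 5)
Your reduction of \eqref{CLT2} to a uniform-in-$s$ tightness bound for the weighted supremum $N_s=\sup_{t\le t_0,\,y\ge0}|\hat{Q}_s(t,y)|/\nu(y)$ is where the argument breaks down, and it breaks at precisely the step you call a ``routine nuisance''. Your fixed-$y$ bound $P\bigl(\sup_{t\le t_0}|\hat{Q}_s(t,y)|>C_1\nu(y)\bigr)\le C'\bigl(\int_y^\infty\bar{F}\bigr)^{\eta/(2+\eta)}/C_1^2$ is fine in spirit, but the passage from a grid in $y$ to the continuum via monotonicity forces the grid to depend on $s$: for $y$ in a cell $[y_k,y_{k+1}]$ the sandwiching error is the fluid-scale quantity $\lambda s\int_{y_k}^{y_{k+1}}\bar{F}(u)\,du$, which must be dominated by $\sqrt{s}\,C_1\nu(y_{k+1})$; where $\bar{F}\approx1$ this requires cell width $O(1/\sqrt{s})$, hence order $\sqrt{s}$ cells on any fixed interval of small $y$. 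Summing your Chebyshev bounds over such a grid gives a total of order $\sqrt{s}/C_1^2$, which does not tend to $0$ as $C_1\to\infty$ uniformly in $s$. Chebyshev-type (second-moment) tails are simply too weak to survive a union bound over an $s$-growing index set; to make your route work you would need genuine chaining with exponential increment bounds in both parameters under the $\nu$-weighting — i.e.\ essentially re-proving a weighted-norm tightness statement over the noncompact $y$-axis that is not contained in Pang and Whitt (their FCLT controls $(t,y)$ on compacts, without the $\nu$-weight). Relatedly, your martingale step is stated too casually: with renewal (non-Poisson) arrivals and service times attached at arrival, $\bar{Q}^{\infty}(\cdot,y)$ is not a compensated counting process and its compensator is not the fluid term; the Krichagina--Puhalskii/Pang--Whitt decomposition has two pieces (a sequential empirical-process term and a compensated-arrival term), so the fixed-$y$ maximal $L^2$ bound is obtainable but not by the one-line Doob argument you sketch.

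The paper sidesteps all of this by proving only what the lemma asserts, namely a lower bound $C_2>0$ (not probability tending to one): it passes to the Gaussian limit $R=R_1+R_2$ via the functional CLT, proves $P(|R(t,y)|\le C_*\nu(y)\ \forall t\in[0,t_0],y\ge0)>0$ by compactifying $[0,t_0]\times[0,\infty]$ under the canonical pseudo-metric, running an entropy bound, and invoking the Borell--TIS inequality (Lemmas \ref{limiting process}--\ref{Borell2}), and then transfers positivity back to the prelimit using that the band event is a continuity set, with $C_1=3C_*$ absorbing the initial-age effect through the event $U_0\le1/(\lambda s)$ and the hazard-rate consequence of the light-tail assumption. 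Your treatment of \eqref{CLT3} (isolate a point, use a nondegenerate CLT, handle $B(0)$ through the residual $U_0=O_P(1/s)$) matches the paper and is fine, but \eqref{CLT2} as you propose it is not proved: either adopt the limit-plus-Borell--TIS route, or supply a genuine weighted chaining argument at the prelimit level with sub-Gaussian increment control.
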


%\bigskip

To prove this lemma, the main idea is to consider the diffusion limit of $%
Q^{\infty }(t,y)$ as a two-dimensional Gaussian field and then invoke
Borell-TIS\ inequality (Adler\ (1990)). By Pang and Whitt (2009) we know
\begin{equation*}
\frac{Q^{\infty }(t,y)-\lambda s\int_{y}^{t+y}\bar{F}(u)du}{\sqrt{s}}%
\Rightarrow R(t,y)
\end{equation*}%
in the space $D_{D[0,\infty )}[0,\infty )$, where
\begin{equation}
R(t,y)=R_1(t,y)+R_2(t,y)  \label{R}
\end{equation}
is a two-dimensional Gaussian field given by
\begin{equation}
R_1(t,y)=\lambda \int_{0}^{t}\int_{0}^{\infty }I(u+x>t+y)dK(u,x)  \label{R_1}
\end{equation}
and
\begin{equation}
R_2(t,y)=\lambda c_{a}^{2}\int_{0}^{t}\bar{F}(t+y-u)dW(u)  \label{R_2}
\end{equation}
where $W(\cdot )$ is a standard Brownian motion, and $K(u,x)=W(\lambda
u,F(x))-F(x)W(\lambda u,1)$ in which $W(\cdot ,\cdot )$ is a standard
Brownian sheet on $[0,\infty )\times \lbrack 0,1]$. $W(\cdot )$ and $K(\cdot
,\cdot )$ are independent processes.\ $c_{a}$ is the coefficient of
variation i.e. ratio of standard deviation to mean of the interarrival times.

The key step is then to show an estimate of this limiting Gaussian process:

\begin{lemma}
Fix $t_0>0$. For $i=1,2$, we have
\begin{equation*}
P(|R(t,y)|\leq C_*\nu(y)\text{\ for all\ }t\in[0,t_0],\ y\in[0,\infty))>0
\end{equation*}
for well-chosen constant $C_*>0$, where $R(\cdot,\cdot)$ and $\nu(\cdot)$
are defined in \eqref{R}, \eqref{R_1}, \eqref{R_2} and \eqref{nu}. \label%
{limiting process}
\end{lemma}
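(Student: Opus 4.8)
The plan is to exploit the independence of the two summands in $R=R_1+R_2$ (equations \eqref{R}--\eqref{R_2}), treat $R_1$ and $R_2$ separately by a Borell--TIS argument on dyadic blocks of the $y$-axis, and recombine. Since $W(\cdot)$ and $K(\cdot,\cdot)$ are independent, $R_2$ is $\sigma(W)$-measurable and $R_1$ is $\sigma(K)$-measurable, so it suffices to show that for each $i=1,2$ the random variable $M_i:=\sup_{0\le t\le t_0,\,y\ge 0}|R_i(t,y)|/\nu(y)$ is finite almost surely: granting this, $P(M_i\le C_0)\uparrow P(M_i<\infty)=1$ as $C_0\to\infty$, so we may fix $C_0$ with $P(M_i\le C_0)>1/2$ for $i=1,2$, whence by independence $\{M_1\le C_0\}\cap\{M_2\le C_0\}$ has positive probability, and on this event $|R(t,y)|\le|R_1(t,y)|+|R_2(t,y)|\le 2C_0\nu(y)$ for all $t\le t_0,\,y\ge0$. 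The lemma then follows with $C_*=2C_0$. (If $\bar F$ has bounded support, only finitely many values of $y$ are relevant and there is nothing to prove, so assume $\bar F(y)>0$ for all $y$.)

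Fix $Y_0>0$ and split the range of $y$. On the compact set $[0,t_0]\times[0,Y_0]$ the fields $R_1,R_2$ have continuous sample paths --- immediate from the stochastic-integral representations \eqref{R_1}--\eqref{R_2} --- hence an a.s.\ finite supremum, while $\nu$ is continuous, strictly decreasing and bounded below by $\nu(Y_0)>0$ there; so $y\in[0,Y_0]$ contributes a finite amount to $M_i$. The issue is $y\to\infty$, where the band width $\nu(y)\downarrow0$. Here the key input is the variance estimate: a direct computation from \eqref{R_1}--\eqref{R_2} gives $\mathrm{Var}(R_i(t,y))\le C_0\int_y^\infty\bar F(u)\,du$ for all $t\ge0$, $y\ge0$, $i=1,2$, and since $\int_y^\infty\bar F(u)\,du=\lambda^{-1}\nu(y)^{2+\eta}$ by \eqref{nu} this reads
\[
\sup_{t\ge0,\ y'\ge y}\mathrm{Var}(R_i(t,y'))\ \le\ C_1\,\nu(y)^{2+\eta},
\]
which is in particular $o(\nu(y)^2)$.

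Now choose $Y_0<Y_1<Y_2<\cdots\nearrow\infty$ with $\nu(Y_n)=2^{-n}\nu(Y_0)$ (possible by continuity and strict monotonicity of $\nu$). Fix $i$ and set $Z_n:=\sup_{t\le t_0,\,Y_n\le y\le Y_{n+1}}|R_i(t,y)|$ and $\sigma_n^2:=\sup_{t\le t_0,\,Y_n\le y\le Y_{n+1}}\mathrm{Var}(R_i(t,y))\le C_1\nu(Y_n)^{2+\eta}=C_2\,2^{-n(2+\eta)}$. Since $R_i$ is centred with variance $\le\sigma_n^2$ on the block, the block has canonical-metric diameter $\le 2\sigma_n$, so Dudley's entropy bound gives $EZ_n\le\sigma_n+C\int_0^{2\sigma_n}\sqrt{\log N_n(\varepsilon)}\,d\varepsilon$, where $N_n(\varepsilon)$ is the $\varepsilon$-covering number of the block in the canonical metric of $R_i$. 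On the block this metric is Lipschitz-comparable (after taking a square root) to Euclidean distance, with a constant controlled by $\bar F(Y_n)$ and $\sup_{u\ge Y_n}f(u)$, so $N_n(\varepsilon)$ is polynomial in $1/\varepsilon$ and in the side lengths of the block; because $\bar F$ decays faster than every power law, \eqref{light tail} forces $Y_{n+1}-Y_n\le Y_{n+1}=e^{o(n)}$, hence $\log N_n(\varepsilon)=O(n)+O(\log(1/\varepsilon))$ for $\varepsilon\le 2\sigma_n$ and therefore
\[
EZ_n\ \le\ C_3\,\sigma_n\,(1+\sqrt n)\ =\ O\!\big(2^{-n(1+\eta/2)}\sqrt n\big)\ =\ o(\nu(Y_{n+1})).
\]
Consequently $C_0\nu(Y_{n+1})-EZ_n\ge\frac12 C_0\nu(Y_{n+1})$ for all large $n$ and any fixed $C_0>0$; since moreover $\nu(Y_{n+1})^2/\sigma_n^2\ge c'\,2^{n\eta}\to\infty$, the Borell--TIS inequality gives $P(Z_n>C_0\nu(Y_{n+1}))\le 2\exp(-(C_0\nu(Y_{n+1})/2)^2/(2\sigma_n^2))\le 2\exp(-c\,C_0^2\,2^{n\eta})$ for $n$ large. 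These probabilities are summable, so by Borel--Cantelli $Z_n\le C_0\nu(Y_{n+1})$ for all sufficiently large $n$, almost surely; and since $\nu(y)\ge\nu(Y_{n+1})$ on $[Y_n,Y_{n+1}]$ this yields $\sup_{t\le t_0,\,y\ge Y_0}|R_i(t,y)|/\nu(y)\le\sup_n Z_n/\nu(Y_{n+1})<\infty$ a.s. Together with the estimate on $[0,Y_0]$ this gives $M_i<\infty$ a.s., and the lemma follows from the reduction of the first paragraph.

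I expect the main obstacle to be the entropy/chaining control of $EZ_n$: one must verify that the canonical metrics of $R_1$ and $R_2$ on each block $[0,t_0]\times[Y_n,Y_{n+1}]$ are comparable to (a power of) the Euclidean distance with a prefactor at most $e^{o(n)}$, so that the covering-number cost $\sqrt{\log N_n}$ does not overwhelm the geometric decay $\sigma_n\asymp 2^{-n(1+\eta/2)}$ of the standard deviation; this is precisely where the explicit representations \eqref{R_1}--\eqref{R_2} together with the light-tail hypothesis \eqref{light-tail assumption} are used. The variance bound and the Borel--Cantelli bookkeeping, by contrast, are routine once that estimate is in hand.
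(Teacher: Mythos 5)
Your argument is correct in substance and rests on the same three pillars as the paper's proof: the independence of $R_1$ and $R_2$, the variance bound $\mathrm{Var}(R_i(t,y))\leq C\int_y^\infty\bar F(u)\,du=C\lambda^{-1}\nu(y)^{2+\eta}=o(\nu(y)^2)$ (exactly what the extra $\eta$ in \eqref{nu} buys), and a Dudley entropy estimate feeding into Borell--TIS. Where you differ is in the organization. The paper works with the single weighted field $\tilde R_i=R_i/\nu$ on all of $[0,t_0]\times[0,\infty]$, compactifies that domain under the canonical pseudo-metric of $\tilde R_i$ (Lemmas \ref{compactification} and \ref{compactification2}), proves $E\sup_S\tilde R_i<\infty$ by one global entropy integral, and applies Borell--TIS once; you instead tile the $y$-axis into blocks on which $\nu$ halves, run Dudley and Borell--TIS on each unweighted block, and glue with Borel--Cantelli. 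Your version dispenses with the compactification lemma and delivers a.s.\ finiteness of $\sup|R_i|/\nu$, which is all the lemma requires; the paper's global version additionally yields a quantitative Gaussian tail for that supremum, which it never uses beyond positivity. One caveat, which you correctly flag as the main remaining obstacle: your covering-number prefactor involves $\sup_{v\geq Y_n}f(v)$, and assumption \eqref{light-tail assumption} is a \emph{lower} bound on the hazard rate, so it does not by itself control $\sup f$. For $R_1$ the paper sidesteps this entirely by bounding $\int_0^{t_2}\bigl(\bar F(t+y-u)-\bar F(t'+y'-u)\bigr)du$ as a difference of two integrals of $\bar F$ over nearby intervals, so only $\bar F\leq1$ is needed and no mean-value bound on $f$ enters; for $R_2$ the paper's appendix uses the same $\sup f$-type Taylor bound you propose, so on that component your proposal is at the same level of rigor as the printed proof.
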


%\bigskip

This lemma relies on an invocation of Borell-TIS inequality on the Gaussian
process $R_i(t,y)$ for $i=1,2$. The verification of the conditions for such
invocation is tedious but routine, and hence will be deferred to the appendix. Here we
provide a brief outline of the arguments: For $i=1,2$,

\begin{itemize}
\item[Step 1:] Define a $d$-metric (in fact a pseudo-metric)
\begin{equation*}
d_i((t,y),(t^{\prime },y^{\prime }))=E(\tilde{R}_i(t,y)-\tilde{R}_i(t,y))^2
\end{equation*}
where $\tilde{R}_i(t,y)=R_i(t,y)/\nu(y)$. Show that the domain $[0,t_0]\times%
[0,\infty]$ can be compactified under this (pseudo) metric.

\item[Step 2:] Use an entropy argument (see for example Adler (1990)) to
show that $E\sup_S\tilde{R}_i(t,y)<\infty$. In particular, $\tilde{R}_i(t,y)$
is a.s. bounded over $S$.

\item[Step 3:] Invoke Borell-TIS inequality i.e. for $x\geq E\sup_S\tilde{R}%
_i(t,y)$,
\begin{equation*}
P\left(\sup_S\tilde{R}_i(t,y)\geq x\right)\leq\exp\left\{-\frac{1}{%
2\sigma_i^2}\left(x-E\sup_S\tilde{R}_i(t,y)\right)^2\right\}
\end{equation*}
where
\begin{equation*}
\sigma_i^2=\sup_SE\tilde{R}_i(t,y)^2
\end{equation*}
\end{itemize}

From these steps, it is straightforward to conclude Lemma \ref{limiting
process}. The rest of the proof of Lemma \ref{CLT1} is to show the
uniformity over $U_0$ in the weak limit of $\bar{Q}^\infty$ to $R$. This is
done by restricting to the set $U_0\leq x$ for $x=O(1/s)$ and using the
light tail property of $U_0$. Again, the derivation is tedious but
straightforward; the details are provided in the appendix.

We need one more lemma:

\begin{lemma}
Let $V_k$ be r.v. with distribution function $F(\cdot)$ satisfying the
light-tail assumption in \eqref{light-tail assumption}. For any $p>0$, we
have
\begin{equation*}
E\left(\max_{k=1,\ldots,n}V_k\right)^p=O(l_p(n)^p)=o(n^\epsilon)
\end{equation*}
where
\begin{equation}
l_p(n)=\inf\{y:np\int_y^\infty u^{p-1}\bar{F}(u)du<\eta\}  \label{l_p}
\end{equation}
for a constant $\eta>0$ and $\epsilon$ is any positive number. \label{max}
\end{lemma}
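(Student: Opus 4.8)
The plan is to control the $p$-th moment of the maximum via a standard truncation at the level $l_p(n)$ defined in \eqref{l_p}, splitting the expectation according to whether $\max_k V_k$ exceeds $l_p(n)$ or not. First I would write
$$E\left(\max_{k=1,\ldots,n}V_k\right)^p = E\left[\left(\max_k V_k\right)^p; \max_k V_k \leq l_p(n)\right] + E\left[\left(\max_k V_k\right)^p; \max_k V_k > l_p(n)\right].$$
The first term is trivially bounded by $l_p(n)^p$, which is the claimed order. For the second term I would use the layer-cake (tail integral) representation together with the union bound: since $\{\max_k V_k > y\}$ has probability at most $n\bar{F}(y)$, one has
$$E\left[\left(\max_k V_k\right)^p; \max_k V_k > l_p(n)\right] \leq l_p(n)^p\, n\bar{F}(l_p(n)) + p\int_{l_p(n)}^\infty y^{p-1} n\bar{F}(y)\,dy.$$
By the definition of $l_p(n)$ in \eqref{l_p}, the integral $np\int_{l_p(n)}^\infty y^{p-1}\bar{F}(y)\,dy$ is bounded by the constant $\eta$; and since $\bar{F}$ decays faster than any power (by \eqref{light tail}, with the exponent chosen larger than $p$), the boundary term $l_p(n)^p n\bar{F}(l_p(n))$ is $o(1)$, in fact $o(l_p(n)^p)$. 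Hence the whole second term is $O(l_p(n)^p)$, giving $E(\max_k V_k)^p = O(l_p(n)^p)$.

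It remains to show $l_p(n)^p = o(n^\epsilon)$ for every $\epsilon>0$. Here I would use the superpolynomial decay \eqref{light tail} again: for any $q>0$ there is $c_q$ with $\bar{F}(u) \leq c_q u^{-q}$, so $np\int_y^\infty u^{p-1}\bar{F}(u)\,du \leq n p c_q \int_y^\infty u^{p-1-q}\,du = \tfrac{npc_q}{q-p}\, y^{p-q}$ once $q>p$. Setting this quantity equal to $\eta$ shows that $l_p(n) \leq C n^{1/(q-p)}$ for a constant $C$ depending on $q$. Since $q$ can be taken arbitrarily large, the exponent $p/(q-p)$ can be made smaller than any prescribed $\epsilon$; thus $l_p(n)^p = o(n^\epsilon)$. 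Combining with the previous paragraph completes the proof.

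The only mildly delicate point — the ``main obstacle'' such as it is — is making sure the truncation level $l_p(n)$ is well defined and finite for all large $n$, i.e.\ that the set $\{y : np\int_y^\infty u^{p-1}\bar{F}(u)\,du < \eta\}$ is nonempty; this follows because the light-tail assumption \eqref{light-tail assumption} guarantees $\int_0^\infty u^{p-1}\bar{F}(u)\,du < \infty$ (indeed all moments of $V$ are finite), so the tail integral $\int_y^\infty u^{p-1}\bar F(u)\,du \to 0$ as $y\to\infty$, and one also needs $l_p(n)\to\infty$, which is immediate since the prefactor $n$ grows. Everything else is the routine truncation-plus-union-bound calculation sketched above, and the polynomial bound on $l_p(n)$ is exactly where assumption \eqref{light-tail assumption} (equivalently \eqref{light tail}) does its work.
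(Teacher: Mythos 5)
Your overall truncation scheme is the same as the paper's, and the second half of your argument (bounding $l_p(n)\leq Cn^{1/(q-p)}$ via $\bar F(u)\leq c_q u^{-q}$ and letting $q$ be large, plus the remark that $l_p(n)$ is well defined because all moments of $V$ are finite) is correct. The genuine problem is your treatment of the boundary term. You bound $P(\max_k V_k>l_p(n))$ by the union bound $n\bar F(l_p(n))$ and then assert that $l_p(n)^p\,n\bar F(l_p(n))=o(1)$, ``in fact $o(l_p(n)^p)$,'' because $\bar F$ decays faster than any power. This does not follow: the superpolynomial decay of $\bar F$ only gives you an \emph{upper} bound on $l_p(n)$ (which is what you use later), whereas to make $n\bar F(l_p(n))$ small you would need a \emph{lower} bound on $l_p(n)$ of order $n^{1/(q-p)}$, which is not available — the hypothesis \eqref{light-tail assumption} bounds the hazard rate from below, not from above, so $\bar F$ may drop arbitrarily sharply just beyond $l_p(n)$ and the definition \eqref{l_p} controls only the tail integral, not the point value $\bar F(l_p(n))$. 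The claim is in fact false in cases the lemma must cover: for bounded-support service times (allowed under \eqref{light-tail assumption} and used in Section 5, e.g.\ $V\sim\mathrm{Uniform}(0,1)$ with $p=1$) one has $l_p(n)\to M<\infty$ (so also your assertion that $l_p(n)\to\infty$ fails) and $n\bar F(l_p(n))\asymp\sqrt{n}\to\infty$, so your bound on the boundary term diverges rather than being $O(l_p(n)^p)$.

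The fix is trivial and brings you exactly to the paper's proof: do not apply the union bound at the truncation point; simply use $P(\max_k V_k>l_p(n))\leq 1$, so the boundary term is at most $l_p(n)^p$, while the union bound is applied only inside the tail integral, which is at most $\eta$ by the definition of $l_p(n)$. Equivalently, as in the paper, write
\begin{equation*}
E\Bigl(\max_{k\leq n}V_k\Bigr)^p=p\int_0^\infty u^{p-1}\bar F_n(u)\,du\leq y^p+np\int_y^\infty u^{p-1}\bar F(u)\,du,
\end{equation*}
using $\bar F_n\leq 1$ on $[0,y]$ and $\bar F_n\leq n\bar F$ on $[y,\infty)$, and take $y=l_p(n)$. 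With that one-line change your argument is correct and coincides with the paper's.
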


%\bigskip

\begin{proof}
Let $\bar{F}_n(x)=P(\max_{k=1,\ldots,n}V_k>x)$. Note that
$$E\left(\max_{k=1,\ldots,n}V_k\right)^p=p\int_0^\infty u^{p-1}\bar{F}_n(u)du\leq y^p+np\int_y^\infty u^{p-1}\bar{F}(u)du$$
for any $y\geq0$. Pick $y=l_p(n)$. Then
$$E\left(\max_{k=1,\ldots,n}V_k\right)^p=O(l_p(n)^p)$$
Using \eqref{light tail} we have $O(l_p(n)^p)=O(n^\epsilon)$ for any $\epsilon>0$.
\end{proof}
%\bigskip

We are now ready to prove Proposition \ref{asymptotic}, which we need the
following construction. Pick $\gamma=1/t_0$ where $\gamma$ is introduced in %
\eqref{nu} and $\xi(y)$ is defined in \eqref{xi}. Recall $C_1$ as in Lemma %
\ref{CLT1}. Define $T_{i},i=0,1,2,\ldots $ as follows: Given $T_{i-1}$,
define
\begin{eqnarray*}
v(s)&=&\inf \left\{y: \sqrt{s}C_1\xi(y)<\frac{1}{2}\right\} \\
z &=&\inf \left\{ kt_{0}:k=1,2,\ldots :kt_{0}\geq v(s)+\Delta \right\} \\
x_{i} &=&x(T_{i-1}) \\
w_{i} &=&\inf \{kt_{0},k=1,2,\ldots :kt_{0}\geq x_{i}\} \\
d_{i} &=&A_{N_{s}(T_{i-1}+S_{i})+1}-(T_{i-1}+S_{i})\text{ i.e. }d_{i}\text{
is the time of first arrival after }T_{i-1}+S_{i} \\
T_{i} &=&T_{i-1}+w_{i}+d_{i}+z
\end{eqnarray*}%
Note that $w_{i}$ and $z $ are multiples of $t_{0}$. For convenience define,
for $u<t$, $\bar{Q}_{u}^{\infty }(t,y):=\bar{Q}^{\infty }(u+t,y)-\bar{Q}%
^{\infty }(u,t+y)$ as the number of arrivals after time $u$ that have
residual service time larger than $y$ at time $u+t$. We define a
\textquotedblleft success" in block $i$ to be the event $\zeta _{i}$ that
all of the following occurs: 1) $\bar{Q}_{T_{i-1}+(k-1)t_{0}}^{\infty
}(t,y)\in \left( \lambda s\int_{y}^{t+y}\bar{F}(u)du\pm \sqrt{s}%
C_{1}\nu(y)\right)$ for all $t\in \lbrack 0,t_{0}]$, for every $k=1,2,\ldots
,w_{i}/t_{0}$. 2) $d_{i}\leq c /s$ for a small constant $c >0$. 3) $%
Q_{T_{i-1}+w_{i}+d_{i}+(k-1)t_{0}}^{\infty }(t,y)\in \left( \lambda
s\int_{y}^{t+y}\bar{F}(u)du\pm \sqrt{s}C_{1}\nu(y)\right)$ for all $t\in
\lbrack 0,t_{0}]$, for every $k=1,2,\ldots ,z /t_{0}$.

Roughly speaking, $\zeta _{i}$ occurs when the $GI/G/\infty $ system behaves
\textquotedblleft normally\textquotedblright\ for a long enough period so
that $Q^{\infty }(t)$ keeps within capacity for that period and the
steady-state confidence band $J(\cdot )$ is reached at the end (see the
discussion preceding Proposition \ref{asymptotic}). More precisely, starting
from $T_{i-1}$ and given $x(T_{i-1})$, $T_{i-1}+w_{i}$ is the time when all
customers in the previous block have left. Adjusting for the age at time $%
T_{i-1}+w_{i}$, starting from $T_{i-1}+w_{i}+d_{i}$, $z $ is a long enough
time so that the system would fall into $J(\cdot )$ if it behaves normally
in each steps of size $t_{0}$ throughout the period. It can be seen by
summing up the interval boundaries that the occurrence of $\zeta _{i}$
ensures $\tau _{A}^{\prime }$ is reached during the last $\Delta $ units of
time before $T_{i}$.

\begin{proof}[Proof of Proposition \ref{asymptotic}]
We first check that the occurrence of event $\zeta_i$ implies that $\tau_A'$ is reached during the last $\Delta $ units of time before $T_{i}$. As discussed above, since $w_i\geq x_i$, all the customers at time $T_{i-1}+w_i$ will be those arrive after time $T_{i-1}$. Hence the occurrence of $\zeta_i$ implies that
\begin{eqnarray}
&&Q^\infty(T_{i-1}+w_i,y) \notag\\
&\in&\left(\lambda s\sum_{k=1}^{w_i/t_0}\int_{(k-1)t_0+y}^{kt_0+y}\bar{F}(u)du\pm\sqrt{s}C_1\sum_{k=1}^{w_i/t_0}\nu((k-1)t_0+y)\right) \notag\\
&\subset&\left(\lambda s\int_y^{w_i+y}\bar{F}(u)du\pm\sqrt{s}C_1\left[\nu(y)+\frac{1}{t_0}\int_y^\infty\nu(u)du\right]\right) \notag\\
&\subset&\left(\lambda s\int_y^{w_i+y}\bar{F}(u)du\pm\sqrt{s}C_1\xi(y)\right) \label{interval}
\end{eqnarray}
and
$$Q^\infty(T_{i-1}+w_i+d_i,y)\in\left(\lambda s\int_{d_i+y}^{w_i+d_i+y}\bar{F}(u)du\pm\sqrt{s}C_1\xi(d_i+y)\right)$$

For each $t\in((k-1)t_0,kt_0]$, denote $[t]=t-(k-1)t_0$, for $k=1,\ldots,z/t_0$. Then
\begin{eqnarray}
&&Q^\infty(T_{i-1}+w_i+d_i+t,y) \notag\\
&\in&\Bigg(\lambda s\int_y^{t+y}\bar{F}(u)du+\lambda s\int_{d_i+y+t}^{w_i+d_i+y+t}\bar{F}(u)du{} \notag\\
&&{}\pm\sqrt{s} C_1\left[\sum_{j=1}^{w_i/t_0}\nu((j-1)t_0+d_i+(k-1)t_0+[t]+y)+\nu(y)+\sum_{j=2}^k\nu((j-2)t_0+[t]+y)I(k>1)\right]\Bigg) \notag\\
&\subset&\left(\lambda s\int_y^{t+y}\bar{F}(u)du+\lambda s\int_{d_i+y+t}^{w_i+d_i+y+t}\bar{F}(u)du\pm\sqrt{s}C_1\left[\sum_{j=1}^{w_i/t_0+k-1}\nu((j-1)t_0+[t]+y)+\nu(y)\right]\right) \notag\\
&\subset&\left(\lambda s\int_y^{t+y}\bar{F}(u)du+\lambda s\int_{d_i+y+t}^{w_i+d_i+y+t}\bar{F}(u)du\pm\sqrt{s}C_1\left[2\nu(y)+\frac{1}{t_0}\int_y^\infty\nu(u)du\right]\right) \notag\\
&\subset&\left(\lambda s\int_y^{t+y}\bar{F}(u)du+\lambda s\int_{d_i+y+t}^{w_i+d_i+y+t}\bar{F}(u)du\pm\sqrt{s}C'\xi(y)\right) \label{interval1}
\end{eqnarray}
where $C'=2C_1$ (which depends on $\gamma$).

It is now obvious that $\zeta_i$ implies $Q^\infty(t)<s$ for $[T_{i-1}+w_i,T_i]$. By the definition of $v(s)$, \eqref{interval} and the fact that $\lambda s\int_y^\infty\bar{F}(u)du$ is smaller and decays faster than $\sqrt{s}C_1\xi(y)$ for $y\geq v(s)$ when $s$ is large, we get $x(T_{i-1}+w_i)\leq v(s)\leq z$. Let $\tilde{T}_i=\sup\{k\Delta:k\Delta\leq T_i\}$ be the largest time before $T_i$ such that $A$ can possibly be hit i.e. in the $\Delta$-skeleton. It remains to show that $Q^\infty(\tilde{T}_i,y)\in J(y)$ in order to conclude that $\zeta_i$ implies a hit on $\tau_A'$.

From \eqref{interval1}, for $t\in[T_{i-1}+w_i+d_i,T_i]$,
$$Q^\infty(t,y)\in\left(\lambda s\int_y^{t-T_{i-1}+y}\bar{F}(u)du-\lambda s\int_{t-T_{i-1}-w_i-d_i+y}^{t-T_{i-1}-w_i+y}\bar{F}(u)du\pm\sqrt{s}C'\xi(y)\right)$$
In particular,
\begin{align}
Q^\infty(\tilde{T}_i,y)&\in\left(\lambda s\int_y^{\tilde{T}_i-T_{i-1}+y}\bar{F}(u)du-\lambda s\int_{\tilde{T}_i-T_{i-1}-w_i-d_i+y}^{\tilde{T}_i-T_{i-1}-w_i+y}\bar{F}(u)du\pm\sqrt{s}C'\xi(y)\right) \notag\\
&=\left(\lambda s\int_y^\infty\bar{F}(u)du-\lambda s\int_{\tilde{T}_i-T_{i-1}+y}^\infty\bar{F}(u)du-\lambda s\int_{\tilde{T}_i-T_{i-1}-w_i-d_i+y}^{\tilde{T}_i-T_{i-1}-w_i+y}\bar{F}(u)du\pm\sqrt{s}C'\xi(y)\right) \label{interval2}
\end{align}

Now note that
$$\lambda s\int_{\tilde{T}_i-T_{i-1}+y}^\infty\bar{F}(u)du+\lambda s\int_{\tilde{T}_i-T_{i-1}-w_i-d_i+y}^{\tilde{T}_i-T_{i-1}-w_i+y}\bar{F}(u)du\leq2\lambda s\int_{v(s)+y}^\infty\bar{F}(u)du$$
and we claim that it is further bounded from above by $\sqrt{s}C\xi(y)$ for arbitrary constant $C$ when $s$ is large enough, uniformly over $y\in[0,\infty)$. In fact, we have $v(s)\geq\inf\{y:s\int_y^\infty\bar{F}(u)\leq\alpha\}$ for any $\alpha>0$ when $s$ is large enough. Now when $\sqrt{s}C\xi(y)<\alpha/(2\lambda)$, $s\int_{v(s)+y}^\infty\bar{F}(u)du\leq s\int_y^\infty\bar{F}(u)du$ which is smaller and decays faster than $\sqrt{s}C\xi(y)$ when $s$ is large. When $\sqrt{s}C\xi(y)\geq\alpha/(2\lambda)$, we have $s\int_{v(s)+y}^\infty\bar{F}(u)du\leq s\int_{v(s)}^\infty\bar{F}(u)du\leq\alpha/(2\lambda)$. Picking $C^*=C'+C$ where $C^*$ is defined in \eqref{J}, we conclude that $\zeta_i$ implies $\tau_A'$ is reached at $\tilde{T}_i$.

Now let $N=\inf\{i:\zeta_i\text{\ occurs\ }\}$. Consider (suppressing the initial conditions), for any $p>0$,
\begin{eqnarray}
&&E(\tau_A')^p \nonumber\\
&=&E\left[\sum_{i=1}^N(w_i+d_i+z)\right]^p \nonumber\\
&=&E\left[\sum_{i=1}^\infty(w_i+d_i+z)I(N\geq i)\right]^p \nonumber\\
&\leq&\left(\sum_{i=1}^\infty(E[(w_i+d_i+z)^p;N\geq i])^{1/p}\right)^p \nonumber\\
&\leq&\left(\sum_{i=1}^\infty(E(w_i+d_i+z)^{pq})^{1/(pq)}(P(N\geq i))^{1/(pr)}\right)^p \label{intermediate1}
\end{eqnarray}
where $q,r>0$ and $1/q+1/r=1$, by using Minkowski's inequality and Holder's inequality in the first and second inequality respectively.

For $i=2,3,\ldots$, we have
\begin{equation}
E(w_i+d_i+z)^{pq}\leq[(Ew_i^{pq})^{1/(pq)}+(Ed_i^{pq})^{1/(pq)}+z]^{pq} \label{decomposition}
\end{equation}
by Minkowski's inequality again.

We now analyze $E(w_i+d_i+z)^p$ for any $p>0$. From now on $C$ denotes constant, not necessarily the same every time it appears. First note that
\begin{equation}
(Ed_i^p)^{1/p}\leq d^{(p)}:=\sup_{b\geq0}(E[d_i^p|B(T_{i-1}+w_i)=b])^{1/p}=\frac{1}{s}\sup_{b\geq0}(E[(U^0-b)^p|B^0(0)=b])^{1/p}=O\left(\frac{1}{s}\right) \label{d p}
\end{equation}
and $z\leq v(s)+\Delta+t_0=o(s^\epsilon)$ for any $\epsilon>0$. The last equality of \eqref{d p} comes from the light-tail assumption on $U^0$. Indeed, since $U^0$ is light-tailed, we have
$$\exp\left\{-\int_0^xh_U(u)du\right\}=\bar{F}_U(x)\leq e^{-cx}$$
for some $c>0$, where $h_U(\cdot)$ and $\bar{F}(x)$ are the hazard rate function and tail distribution function of $U^0$ respectively. This implies that $h(x)\geq c$ for all $x\geq0$. Then
$$\sup_{b\geq0}P(U^0-b>x|U^0>b)=\sup_{b\geq0}\exp\left\{-\int_b^{x+b}h(u)du\right\}\leq e^{-cx}$$
and so
$$\sup_{b\geq0}E[(U^0-b)^p|B^0(0)=b]=\sup_{b\geq0}p\int_0^\infty x^{p-1}P(U^0-b>x|U^0>b)dx\leq p\int_0^\infty x^{p-1}e^{-cx}dx<\infty$$

For $i=1$, $w_1\leq l(s)+t_0=o(s^\epsilon)$ where $l(s)$ is defined in \eqref{l}. Hence $E(w_1+d_1+z)^p\leq[(Ew_1^p)^{1/p}+(Ed_1^p)^{1/p}+z]^p=o(s^\epsilon)$ for any $\epsilon>0$.

Now
\begin{align}
Ew_i^p&\leq E\left[\left(\max_{i=1,\ldots,N_s(T_{i-1})-N_s(T_{i-2})}V_i\right)^p\right] \notag\\
&=E\left[E\left[\left(\max_{i=1,\ldots,N_s(T_{i-1})-N_s(T_{i-2})}V_i\right)^p\Bigg|N_s(T_{i-1})-N_s(T_{i-2})\right]\right] \notag\\
&\leq CE[l_p(N_s(T_{i-1})-N_s(T_{i-2}))^p]\text{\ \ for some constant $C=C(p)$ and $l_p(\cdot)$ defined in \eqref{l_p}} \notag\\
&\leq CE[(N_s(T_{i-1})-N_s(T_{i-2}))^\epsilon]\text{\ \ for constant $C=C(p,\epsilon)$} \label{intermediate11}
\end{align}
for any $\epsilon>0$, by Lemma \ref{max}. Pick $\epsilon<1$. By Jensen's inequality and elementary renewal theorem, \eqref{intermediate11} is less than or equal to
\begin{eqnarray}
&&C(E[N_s(T_{i-1})-N_s(T_{i-2})])^\epsilon \notag\\
&=&C(E[N_s(T_{i-1})-N_s(T_{i-2})|T_{i-1}-T_{i-2}])^\epsilon \notag\\
&\leq&C(E[\tilde{\lambda}s(T_{i-1}-T_{i-2})])^\epsilon\text{\ \ for some $\tilde{\lambda}>\lambda$} \notag\\
&=&C\tilde{\lambda}^\epsilon s^\epsilon(E[T_{i-1}-T_{i-2}])^\epsilon \notag\\
&=&C\tilde{\lambda}^\epsilon s^\epsilon(E[w_{i-1}+d_{i-1}+z])^\epsilon \label{intermediate21}
\end{eqnarray}

Let $y_i=E[w_i+d_i+z]$. We then have
$$y_i=Cs^\epsilon y_{i-1}^\epsilon+d^{(1)}+z$$
By construction $y_i\geq t_0$, and since $v(s)=o(s^\epsilon)$ for any $\epsilon>0$ we have
$$d^{(1)}+z\leq Cs^\epsilon t_0^\epsilon\leq Cs^\epsilon y_i^\epsilon$$
for large enough $s$, uniformly over $i$. Hence
$$y_i\leq Cs^\epsilon y_{i-1}^\epsilon+d^{(1)}+z\leq Cs^\epsilon y_{i-1}^\epsilon$$
Now we can write
\begin{align}
y_i&\leq Cs^\epsilon y_{i-1}^\epsilon\leq Cs^\epsilon(Cs^\epsilon y_{i-2}^\epsilon)^\epsilon=C^{1+\epsilon}s^{\epsilon+\epsilon^2}y_{i-2}^{\epsilon^2}{} \notag\\
&{}\cdots\leq(C^{1/(1-\epsilon)}\vee1)s^{\epsilon/(1-\epsilon)}y_1^{\epsilon^{i-1}}=o(s^\rho) \label{intermediate31}
\end{align}
for any $\rho>0$ by choosing $\epsilon$, uniformly over $i$.

Therefore from \eqref{decomposition}, \eqref{intermediate21} and \eqref{intermediate31}, we get
\begin{equation}
E(w_i+d_i+z)^{pq}=o(s^\epsilon) \label{intermediate51}
\end{equation}
for any $\epsilon>0$ uniformly over $i$.

Now consider
\begin{eqnarray}
P(N\geq1)&=&P(\zeta_1^c)=1-P(\zeta_1) \notag\\
&\leq&1-P\left(d_1\leq\frac{c}{s}\right)C_2^{(w_1+z)/t_0}{} \notag\\
&&{}\text{\ where $C_2$ is defined in Lemma \ref{CLT1} and $c$ is defined in the discussion of $\zeta_i$} \notag\\
&\leq&1-be^{-a(w_1+z)} \notag\\
&=&1-be^{-o(s^\epsilon)} \label{intermediate61}
\end{eqnarray}
for some constants $a>0$ and $0<b<1$ and any $\epsilon>0$. Moreover, for $i=2,3,\ldots$,
\begin{align}
P(N\geq i)&=P(N\geq i-1)P(\zeta_{i-1}^c|N\geq i-1) \nonumber\\
&\leq P(N\geq i-1)E[1-be^{-a(w_{i-1}+z)}|N\geq i-1] \nonumber\\
&\leq P(N\geq i-1)(1-be^{-a(E[w_{i-1}|N\geq i-1]+z)}) \label{intermediate bound}
\end{align}
by Jensen's inequality and that the function $1-be^{-a(\cdot+z)}$ is concave.

Consider $E[w_i|N\geq i]$ for any $i=2,3,\ldots$. We have
\begin{equation}
E[w_i|N\geq i]=E[E[w_i|\zeta_{i-1}^c,\ w_{i-1}+d_{i-1}+z]|N\geq i] \label{intermediate41}
\end{equation}
Now by singling out failure in the first trial of $t_0$ (see the discussion on $\zeta_i$), we get
$$P(\zeta_{i-1}^c|w_{i-1}+d_{i-1}+z)\geq C_3$$
where $C_3$ is defined in Lemma \ref{CLT1}, uniformly over $w_{i-1}+d_{i-1}+z$. Hence
\begin{align*}
C_3E[w_i|\zeta_{i-1}^c,w_{i-1}+d_{i-1}+z]&\leq\int P(\zeta_{i-1}^c|w_{i-1}+d_{i-1}+z)E[w_i|\zeta_{i-1}^c,w_{i-1}+d_{i-1}+z]P(w_{i-1}+d_{i-1}+z\in dx)\\
&\leq Ew_i
\end{align*}
which gives
$$E[w_i|\zeta_{i-1}^c,\ w_{i-1}+d_{i-1}+z]\leq\frac{Ew_i}{C_3}$$
uniformly over $w_{i-1}+d_{i-1}+z$. Therefore \eqref{intermediate41} is bounded from above by $Ew_i/C_3$.

From \eqref{intermediate21} and \eqref{intermediate31} we know that $Ew_i=o(s^\epsilon)$ for any $\epsilon>0$. So \eqref{intermediate bound} is less than or equal to
\begin{equation}
P(N\geq i-1)(1-be^{-a(Ew_{i-1}/C_3+z)})=P(N\geq i-1)(1-be^{-o(s^\epsilon)}) \label{asymptotic2}
\end{equation}
for any $\epsilon>0$ uniformly over $i$.

By \eqref{intermediate1}, \eqref{intermediate61}, \eqref{intermediate51} and \eqref{asymptotic2} we get
\begin{align*}
E\tau^p&\leq o(s^\epsilon)\left(\sum_{i=1}^\infty(P(N\geq i))^{1/(pr)}\right)^p\\
&\leq o(s^\epsilon)\left(\sum_{i=1}^\infty(1-be^{-o(s^\epsilon)})^{i/(pr)}\right)^p\\
&\leq o(s^\epsilon)\frac{1}{[1-(1-be^{-o(s^\epsilon)})^{1/(pr)}]^p}\\
&\leq o(s^\epsilon)e^{o(s^\epsilon)}
\end{align*}
Hence
$$\frac{1}{s}\log E\tau^p\leq\frac{\epsilon}{s}+\frac{o(s^\epsilon)}{s}\to0$$
as $s\to\infty$. On the other hand, we pick $A$ such that $\tau_A\geq\Delta$ and so
$$\frac{1}{s}\log E\tau_A^p\geq\frac{1}{s}\log\Delta^p\to0$$
Conclusion follows for \eqref{tau limit}.

For \eqref{N limit}, note that $N_A\leq N_s(\tau_A)\leq N_s(\tau_A')$ and $EN_s(t)^p=O(st)$ since $(1/s)\log Ee^{\theta N_s(t)}\to-\psi_N(\theta)t$. Hence
$$EN_s(\tau_A')^p\leq O(s^p)E(\tau_A')^p$$
and the result follows from \eqref{tau limit}.
\end{proof}
%\bigskip

\begin{remark}
The proof of Proposition \ref{asymptotic} can be simplified when the service
time has bounded support, say on $[0,M]$. In this case the $GI/G/\infty $
system is \textquotedblleft $M+U_{0}$-independent" i.e. $W_{t}^{\infty }$,
the state of the system at time $t$ and $W_{A_{N_{s}(t)+1}+M}^{\infty }$,
the state of the system at $M$ time units after the first arrival since time
$t$ are independent. As a result we can merely set $v(s)=M$ and $x_i=M$ for
any $i$, and the same argument as above will apply.
\end{remark}

\bigskip

\section{Numerical Example}

We close this paper by a numerical example for $GI/G/s$. We set the
interarrival times in the base system to be $\text{Gamma}(1/2,1/2)$ so $%
\lambda =1$. For illustrative convenience we set the service times as $\text{%
Uniform}(0,1)$. Hence traffic intensity is $1/2$. In this case, we can
simply set $C^{\ast }=1$ and $\xi(y)=\text{sd}(R(\infty ,y))\vee C_{1}=\sqrt{%
\lambda \int_{y}^{\infty }F(u)\bar{F}(u)du+\lambda c_{a}^{2}\int_{y}^{\infty
}\bar{F}(u)^{2}du}\vee C_{1}$ with $C_{1}=1.1$ (note that $\eta=0$ and we
use a truncated $\xi(y)$; the validity of this simpler choice than the one
displayed in Section 2.1 can be verified from the arguments in Section 4
specialized to the case of bounded service time). Also we choose $\Delta =1$%
. To test the numerical efficiency of our importance sampling algorithm, we
compare it with\ crude Monte Carlo scheme using increasing values of $s$,
namely $s=10,\ 30,\ 60,\ 80,\ 100$ and $120$.

As discussed in Section 2, since we run our importance sampler everytime we
hit set $A$, the initial positions of the importance samplers are dependent.
To get an unbiased estimate of standard error we group the samples into
batches and obtain statistics based on these batch samples (see Asmussen and
Glynn (2007)). To make the estimates and statistics comparable, for each
experiment we run the computer for roughly 120 seconds CPU time and always
use 20 batches. In the tables below, we output the estimates of loss
probability, the relative errors (ratios of sample standard
deviation to sample mean) and 95\% confidence intervals for both crude Monte
Carlo scheme and importance sampler under different values of $s$.

When $s$ is small we see that crude Monte Carlo performs slightly better
than our importance sampler. However, when $s$ is over 80, importance
sampler starts to perform better. When $s$ is above 100, crude Monte Carlo
totally breaks down while our importance sampler still gives estimates that
have encouragingly small relative error.

\begin{table}[ht]
\begin{minipage}[b]{0.5\linewidth}\centering
{\scriptsize\begin{tabular}{l|lll}
\multicolumn{4}{c}{Crude Monte Carlo}\\
\multicolumn{4}{c}{}\\
$s$& Estimate & R.E. & C.I. \\ \hline
$10$ & $0.05318$ & $0.0265$ & $(0.05252,0.05384)$ \\
$30$ & $0.003174$ & $0.111$ & $(0.003009,0.003338)$ \\
$60$ & $7.0922\times 10^{-5}$ & $1.388$ & $(2.4847\times
10^{-5},1.1700\times 10^{-4})$ \\
$80$ & $6.9444\times 10^{-7}$ & $4.472$ & $(-7.5904\times
10^{-7},2.1479\times 10^{-6})$ \\
$100$ & $0$ & $N/A$ & $N/A$ \\
$120$ & $0$ & $N/A$ & $N/A$%
\end{tabular}}
\end{minipage}
\hspace{0.5cm}
\begin{minipage}[b]{0.5\linewidth}
\centering
{\scriptsize \begin{tabular}{|lll}
\multicolumn{3}{c}{Importance Sampler}\\
\multicolumn{3}{c}{}\\
Estimate & R.E. & C.I. \\ \hline
$0.05412$ & $0.130$ & $(0.05084,0.05740)$ \\
$0.003204$ & $0.570$ & $(0.002349,0.004060)$ \\
$6.2585\times 10^{-5}$ & $2.258$ & $(-3.5529\times
10^{-6},1.2872\times 10^{-4})$ \\
$4.5001\times 10^{-8}$ & $1.879$ & $(5.4365\times
10^{-9},8.4565\times 10^{-8})$ \\
$8.1178\times 10^{-10}$ & $2.296$ & $(-6.0511\times
10^{-11},1.6841\times 10^{-9})$ \\
$1.3025\times 10^{-10}$ & $4.472$ & $(-1.4237\times
10^{-10},4.0286\times 10^{-10})$%
\end{tabular}}
\end{minipage}
\end{table}

We can also analyze the graphical depiction of the sample paths. Figures 6
and 7 are two sample paths run by Algorithm 2, initialized at the mean of $%
Q(t,y)$ i.e. $\lambda s\int_y^\infty\bar{F}(u)du$. Figure 6 is a contour
plot of $Q(t,y)$, whereas Figure 7 is a three-dimensional plot of another $%
Q(t,y)$. As we can see, the number of customers (the color at the $t$-axis)
increases from time 0 to around 0.95 when it hits overflow in the contour
plot. Similar trajectory appears in the three-dimensional plot. These plots
are potentially useful for operations manager to judge the possibility of
overflow over a finite horizon given the current state.

\begin{figure}[ht]
\centering
\subfigure{
\includegraphics[scale=.5]{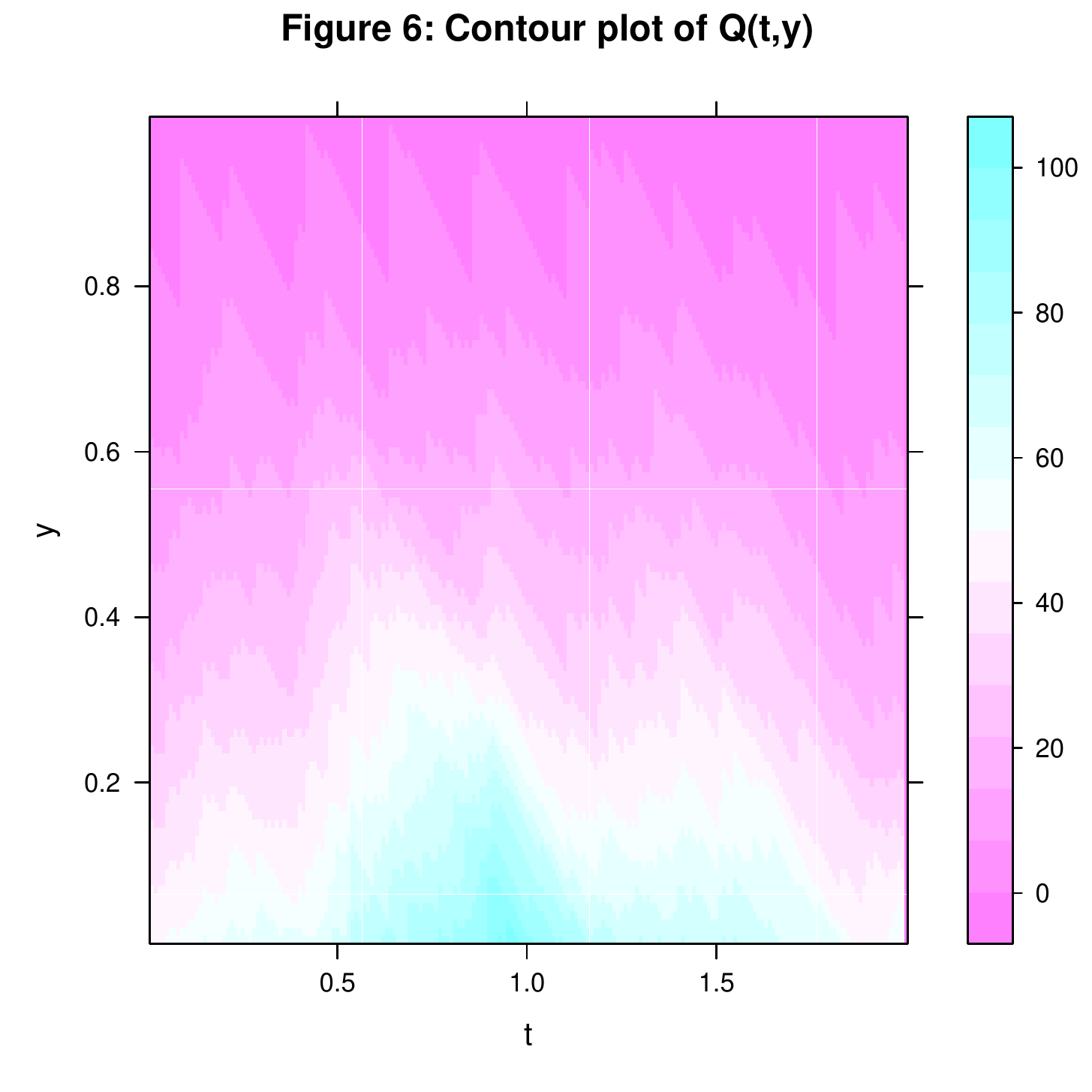}
} \subfigure{
\includegraphics[scale=.5]{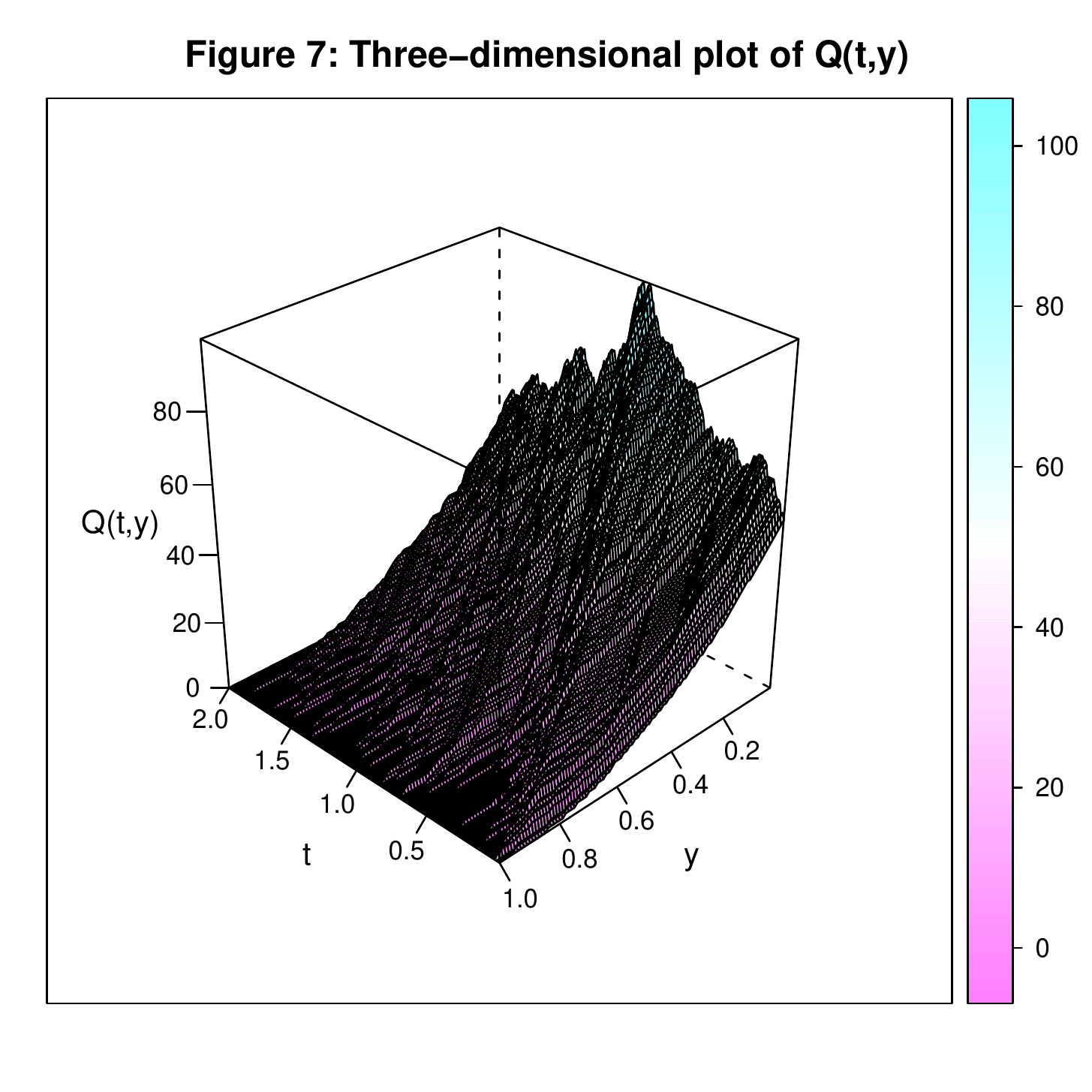}
}
\end{figure}

%\bigskip

\appendix

\section{Technical Proofs}

\subsection{Proof of Lemma \protect\ref{psi properties}}

The domain of $\psi_t(\cdot)$ is easily seen to inherit from $\psi_N(\cdot)$%
. Write
\begin{equation*}
\psi_t(\theta)=\int_0^t\psi_N(\log(e^\theta\bar{F}(u)+F(u)))du
\end{equation*}
Note that
\begin{equation*}
\frac{\partial}{\partial\theta}\psi_N(\log(e^\theta\bar{F}(u)+F(u)))=\psi
_{N}^{\prime }(\log (e^{\theta }\bar{F}(u)+F(u)))\frac{e^{\theta }\bar{F}(u)%
}{e^{\theta }\bar{F}(u)+F(u)}
\end{equation*}
is continuous in $u$ and $\theta$. Hence
\begin{equation*}
\psi _{t}^{\prime }(\theta )=\int_{0}^{t}\psi _{N}^{\prime }(\log (e^{\theta
}\bar{F}(u)+F(u)))\frac{e^{\theta }\bar{F}(u)}{e^{\theta }\bar{F}(u)+F(u)}du
\end{equation*}%
(see Rudin (1976), p. 236 Theorem 9.42). Moreover, $\psi _{N}^{\prime }(\log
(e^{\theta }\bar{F}(u)+F(u)))e^{\theta }\bar{F}(u)/(e^{\theta }\bar{F}%
(u)+F(u))$ is uniformly continuous in $u$ and a neighborhood of $\theta$,
for any $\theta\in\mathbb{R}$. Hence $\psi _{t}^{\prime }(\theta )$ is
continuous in $\theta$. Also the strict monotonicity of $\psi_N^{\prime
}(\cdot)$ implies that $\psi_t^{\prime }(\theta)$ too is strictly increasing
for any $\theta>0$.

Following the same argument, we have
\begin{equation*}
\psi_t^{\prime \prime }(\theta)=\int_{0}^{t}\Bigg[\psi _{N}^{\prime
\prime}(\log(e^ \theta\bar{F}(u)+F(u)))\left( \frac{e^{\theta }\bar{F}(u)}{%
e^{\theta }\bar{F}(u)+F(u)}\right) ^{2}+\psi _{N}^{\prime }(\log(e^\theta
\bar{F}(u)+F(u)))\frac{F(u)\bar{F}(u)e^{\theta }}{(e^{\theta }\bar{F}%
(u)+F(u))^{2}}\Bigg]du
\end{equation*}
which is continuous in $\theta$.

Finally, note that as $\theta\nearrow\infty$, $\psi _{N}^{\prime }(\log
(e^{\theta }\bar{F}(u)+F(u)))e^{\theta }\bar{F}(u)/(e^{\theta }\bar{F}%
(u)+F(u))\nearrow\infty$ for any $u\in\text{supp}\ \bar{F}$ since $%
\psi_N(\cdot)$ is steep. By monotone convergence theorem we conclude that $%
\psi_t(\cdot)$ is steep.

%\bigskip

\subsection{Proof of Lemma \protect\ref{theta}}

1) Denote $\theta(t)=\theta_t$ for convenience. Since $\psi_t^{\prime
}(\cdot)$ is continuously differentiable by Lemma \ref{psi properties}, by
implicit function theorem, we can differentiate $\psi_t^{\prime
}(\theta(t))=a_t$ with respect to $t$ on both sides to get
\begin{eqnarray*}
\psi _{N}^{\prime}(\log(e^{\theta (t)}\bar{F}(t)+F(t)))\frac{e^{\theta (t)}%
\bar{F}(t)}{e^{\theta (t)}\bar{F}(t)+F(t)}+\int_{0}^{t}\Bigg[\psi
_{N}^{\prime \prime} (\log(e^{\theta (t)}\bar{F}(u)+F(u)))\left( \frac{%
e^{\theta (t)}\bar{F}(u)}{e^{\theta (t)}\bar{F}(u)+F(u)}\right) ^{2}{} && \\
{}+\psi _{N}^{\prime}(\log(e^{\theta (t)}\bar{F}(u)+F(u)))\frac{F(u)\bar{F}%
(u)e^{\theta (t)}}{(e^{\theta (t)}\bar{F}(u)+F(u))^{2}}\Bigg]du\theta
^{\prime }(t) &=&\lambda \bar{F}(t)
\end{eqnarray*}%
which gives
\begin{eqnarray*}
&&\theta ^{\prime }(t) \\
&=&\frac{\lambda \bar{F}(t)-\psi _{N}^{\prime }(\log (e^{\theta(t) }\bar{F}%
(t)+F(t)))e^{\theta(t) }\bar{F}(t)/(e^{\theta(t) }\bar{F}(t)+F(t))}{%
\int_{0}^{t}\Bigg[\psi _{N}^{\prime \prime}(\log(e^{\theta (t)}\bar{F}%
(u)+F(u)))\left( \frac{e^{\theta (t)}\bar{F}(u)}{e^{\theta (t)}\bar{F}%
(u)+F(u)}\right) ^{2}+\psi _{N}^{\prime} (\log(e^{\theta (t)}\bar{F}%
(u)+F(u)))\frac{F(u)\bar{F}(u)e^{\theta (t)}}{(e^{\theta (t)}\bar{F}%
(u)+F(u))^{2}}\Bigg]du} \\
&\leq &0
\end{eqnarray*}%
The inequality is due to the fact that
\begin{equation}
g_{t}(\theta ):=\psi _{N}^{\prime }(\log (e^{\theta }\bar{F}(t)+F(t)))\frac{%
e^{\theta }\bar{F}(t)}{e^{\theta }\bar{F}(t)+F(t)}  \label{g}
\end{equation}%
is non-decreasing in $\theta $ and $g_{t}(0)=\lambda \bar{F}(t)$, and that $%
\psi _{N}(\cdot )$ is non-decreasing and convex. Hence $\theta (t)$ is
non-increasing.

\bigskip

\noindent 2) Since $a_t\geq1-\lambda EV$, $\theta_t\geq\bar{\theta}_t$ where
$\bar{\theta}_t$ satisfies $\psi_t^{\prime }(\bar{\theta}_t)=1-\lambda EV$,
well-defined when $t$ is small enough. Moreover, it is easy to check that $%
\psi_t^{\prime }(\theta)\leq\psi_N^{\prime }(\theta)t$ for any $\theta,t>0$
(either by the formula of $\psi_t^{\prime }$ and $\psi_N^{\prime }$ or by
definition in terms of Gartner-Ellis limit). This implies that $%
(\psi_t^{\prime -1}(y)\geq(\psi_N^{\prime -1}(y/t)$ for any $y$ in the
domain. Putting $y=1-\lambda EV$ gives $\bar{\theta}_t\geq(\psi_N^{\prime
-1}((1-\lambda EV)/t)$. By steepness of $\psi_N$ we have $(\psi_N^{\prime
-1}((1-\lambda EV)/t)\nearrow\infty$ as $t\searrow0$. So $%
\theta_t\nearrow\infty$ as $t\searrow0$.

\bigskip

\noindent 3) Consider $\psi_t^{\prime }(\theta_t)=a_t$, or $%
\theta_t=(\psi_t^{\prime -1}(a_t)$. Now from \eqref{psi infinity} we have
\begin{equation*}
\psi_\infty^{\prime }(\theta)=\int_0^\infty\psi_N^{\prime \theta}\bar{F}%
(u)+F(u)))\frac{e^\theta\bar{F}(u)}{e^\theta\bar{F}(u)+F(u)}du
\end{equation*}
and that $\psi_\infty^{\prime }(\theta)$ is increasing in $\theta$, by the
same argument as in the proof of 1). Moreover, by monotone convergence we
have $\psi_t^{\prime }\nearrow\psi_\infty^{\prime }$ as $t\nearrow\infty$.

By Billingsley (1979), p. 287, or Resnick (2008), p. 5, Proposition 0.1, we
have $(\psi_t^{\prime -1}\to(\psi_\infty^{\prime -1}$ as $t\nearrow\infty$.
Moreover, since $(\psi_t^{\prime -1}$ is increasing over the compact
interval $[\lambda EV,1]$, the convergence is uniform. By Resnick (2008), p.
2, this implies continuous convergence, and hence $(\psi_t^{\prime
-1}(a_t)\to(\psi_\infty^{\prime -1}(1)$, or $\theta_t\to\theta_\infty$.

%\bigskip

\subsection{Proof of Lemma \protect\ref{rate}}

1) As in the proof of Lemma \ref{theta} Part 1, denote $\theta(t)=\theta_t$.
Consider
\begin{align*}
\frac{d}{dt}I_{t}& =\theta (t)\lambda \bar{F}(t)+\theta ^{\prime
}(t)a_{t}-\psi _{t}^{\prime }(\theta (t))\theta ^{\prime }(t)-\psi _{N}(\log
(e^{\theta (t)}\bar{F}(t)+F(t))) \\
& =\theta (t)\lambda \bar{F}(t)-\psi _{N}(\log (e^{\theta (t)}\bar{F}%
(t)+F(t)))
\end{align*}%
since $\psi _{t}^{\prime }(\theta (t))=a_{t}$. Note that $h_{t}(\theta
):=\psi _{N}(\log (e^{\theta }\bar{F}(t)+F(t)))$ is convex in $\theta $ for
any $t\geq 0$ and so
\begin{equation*}
h_{t}(\theta (t))\geq h_{t}(0)+h_{t}^{\prime }(0)\theta (t)
\end{equation*}%
which gives
\begin{equation*}
\psi _{N}(\log (e^{\theta (t)}\bar{F}(t)+F(t)))\geq \lambda \bar{F}(t)\theta
(t)
\end{equation*}%
Hence $(d/dt)I_{t}\leq 0$ and so $I_t$ is non-increasing.

\bigskip

\noindent 2) Write $I_t=a_t\theta_t-\psi_t(\theta_t)$. By Lemma \ref{theta}
Part 3, $\theta_t\searrow\theta_\infty$ on $[\theta_\infty,\theta_T]$ for $%
t\geq T$ for some $T>0$. Since $\psi_t(\theta)$ is increasing in $\theta$,
by continuous convergence (see Resnick (2008), p. 2) we have $%
\psi_t(\theta_t)\to\psi_\infty(\theta_\infty)$. Hence $I_t\to I^*$ defined
in \eqref{I optimal}.

\bigskip

\noindent 3) Note that in case $V$ is supported on $[0,M]$, it is easy to
check that $I_t=I_M$ is the same for any $t\geq M$. Hence the conclusion.

%\bigskip

\subsection{Proof of Lemma \protect\ref{tilde I}}

1) Following the spirit of the proof of Lemma \ref{rate} Part 1, denote $%
\tilde{\theta}(t)=\tilde{\theta}_t$ for convenience and consider
\begin{equation*}
\frac{d}{dt}\tilde{I}_{t}=\tilde{\theta}^{\prime }(t)(1-\lambda EV)-\psi
_{N}^{\prime }(\tilde{\theta}(t))t\tilde{\theta} ^{\prime }(t)-\psi _{N}(%
\tilde{\theta}(t))=-\psi _{N}(\tilde{\theta}(t))\leq 0
\end{equation*}%
for small $t$, using $\psi_N^{\prime }(\tilde{\theta}_t)t=1-\lambda EV$.
Hence the conclusion.

\bigskip

\noindent 2) Consider $\tilde{\theta}_t=(\psi_N^{\prime -1}((1-\lambda
EV)/t) $, well-defined by the strict monotonicity of $\psi_N^{\prime }$. By
steepness of $\psi_N$ we have $(\psi_N^{\prime -1}((1-\lambda
EV)/t)\nearrow\infty$ as $t\searrow0$. So $\tilde{\theta}_t\nearrow\infty$
as $t\searrow0$.

Now write
\begin{equation*}
\tilde{I}_t=\tilde{\theta}_t(1-\lambda EV)-\psi_N(\tilde{\theta}%
_t)t=(1-\lambda EV)\left(\tilde{\theta}_t-\frac{\psi_N(\tilde{\theta}_t)}{%
\psi_N^{\prime }(\tilde{\theta}_t)}\right)\to\infty
\end{equation*}
where the convergence follows from \eqref{technical} and 1).

%\bigskip

\subsection{Proof of Lemma \protect\ref{uniformity}}

To prove Lemma \ref{uniformity}, we first need the following analytical
lemma:

\begin{lemma}
Let $h_m:\mathcal{D}\subset\mathbb{R}^n\to\mathbb{R}$ be a sequence of
monotone functions, in the sense that $h_m(x_1,x_2,%
\ldots,x_{i-1},y_i,x_{i+1},\ldots,x_n)$ is either non-decreasing or
non-increasing in $y_i$ fixing $x_1,\ldots,x_{i-1},x_i,\ldots,x_n$, for any $%
i=1,\ldots,n$. Moreover, suppose $\mathcal{D}$ is compact. If $h_m\to h$
pointwise, where $h$ is continuous, then the convergence is uniform over $%
\mathcal{D}$. \label{monotone}
\end{lemma}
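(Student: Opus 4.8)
The plan is to adapt the proof of the classical Dini theorem, using the coordinatewise monotonicity of the $h_m$ in place of the usual sandwiching by a single monotone sequence. Throughout I will take $\mathcal{D}=\prod_{i=1}^{n}[a_i,b_i]$ to be a bounded rectangle — this is the situation in every application of the lemma in the paper, and it is what keeps the auxiliary ``corner'' points constructed below inside $\mathcal{D}$.

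First I would fix $\epsilon>0$ and, using that $h$ is continuous hence uniformly continuous on the compact set $\mathcal{D}$, choose $\delta>0$ so that $|h(x)-h(y)|<\epsilon$ whenever $\|x-y\|_\infty\le\delta$. Then I would partition each edge $[a_i,b_i]$ into subintervals of mesh at most $\delta$ with endpoints $a_i=t^i_0<\dots<t^i_{K_i}=b_i$, and let $\mathcal{G}=\prod_{i=1}^n\{t^i_0,\dots,t^i_{K_i}\}$ be the resulting finite grid. Since $\mathcal{G}$ is finite and $h_m\to h$ pointwise, there is an $M$ with $|h_m(g)-h(g)|<\epsilon$ for all $g\in\mathcal{G}$ and all $m\ge M$; crucially this $M$ depends only on $\epsilon$.

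Next, for a fixed $m\ge M$ and an arbitrary $x\in\mathcal{D}$, I would locate indices $j_i$ with $x_i\in[t^i_{j_i},t^i_{j_i+1}]$ and build two grid points $x^\flat,x^\sharp\in\mathcal{G}$ as follows: in coordinate $i$, set $(x^\flat_i,x^\sharp_i)=(t^i_{j_i},t^i_{j_i+1})$ if $h_m$ is non-decreasing in its $i$-th argument, and $(x^\flat_i,x^\sharp_i)=(t^i_{j_i+1},t^i_{j_i})$ if it is non-increasing. Moving from $x^\flat$ to $x$ one coordinate at a time keeps each intermediate point in the rectangle and each step in a direction along which $h_m$ does not decrease, so $h_m(x^\flat)\le h_m(x)$, and likewise $h_m(x)\le h_m(x^\sharp)$. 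Since $\|x^\flat-x\|_\infty,\|x^\sharp-x\|_\infty\le\delta$, uniform continuity puts $h(x^\flat)$ and $h(x^\sharp)$ within $\epsilon$ of $h(x)$, and then $h_m(x^\flat)>h(x)-2\epsilon$ together with $h_m(x^\sharp)<h(x)+2\epsilon$ force $|h_m(x)-h(x)|<2\epsilon$. As $x\in\mathcal{D}$ and $m\ge M$ were arbitrary, and $\epsilon>0$ was arbitrary, this gives uniform convergence.

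The point requiring the most care is the construction of $x^\flat,x^\sharp$: the direction of monotonicity can vary with the coordinate and with $m$, so there is no single partial order to exploit, and one must read the direction off the hypothesis separately for each pair $(m,i)$. What rescues the argument is that $x^\flat$ and $x^\sharp$ always lie in the one fixed finite grid $\mathcal{G}$, so the $M$ coming from pointwise convergence stays uniform over $x$ and $m$. The only structural input beyond compactness is that $\mathcal{D}$ is a rectangle, so that the successive one-coordinate moves remain inside $\mathcal{D}$; this is precisely the setting in which the lemma is used.
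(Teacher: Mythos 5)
Your proof is correct and follows essentially the same route as the paper's: both arguments sandwich $h_m(x)$ between its values at nearby points of a fixed finite set, chosen coordinate by coordinate according to the monotonicity directions, and then combine uniform continuity of $h$ with pointwise convergence at those finitely many points to get a bound uniform in $x$ and $m$. Your explicit restriction to a rectangular $\mathcal{D}$ (which is the setting of the paper's only application of the lemma, in the proof of Lemma \ref{uniformity}) is in fact what guarantees that the coordinatewise comparison points exist, lie in $\mathcal{D}$, and are within $\delta$ of $x$ --- a point the paper's own proof glosses over --- so it is a reasonable sharpening rather than a gap.
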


%\bigskip

\begin{proof}
Since $\mathcal{D}$ is compact, continuity of $h$ implies uniform continuity. Therefore, given $\epsilon>0$, there exists $\delta>0$ such that $\|\mathbf{x}_1-\mathbf{x}_2\|<\delta$ implies $|h(\mathbf{x}_1)-h(\mathbf{x}_2)|<\epsilon$. Compactness of $\mathcal{D}$ implies that there is a finite collection of these $\delta$-balls to cover $\mathcal{D}$. Let $\{N_\delta(\mathbf{x})\}_{x\in\mathcal{E}}$ be such collection. Note that $h_m\to h$ uniformly over $\mathcal{E}$.

For any $\mathbf{x}=(x_1,\ldots,x_n)\in\mathcal{D}$, consider
$$|h_m(\mathbf{x}-h(\mathbf{x})|\leq|h_m(\mathbf{x})-h_m(\tilde{\mathbf{x}})|+|h_m(\tilde{\mathbf{x}})+h(\tilde{\mathbf{x}})|+|h(\tilde{\mathbf{x}})-h(\mathbf{x})|$$
where $\tilde{x}=(\tilde{x}_1,\ldots,\tilde{x}_n)$ is chosen to be the closet point to $\mathbf{x}$ in $\mathcal{E}$ that satisfies: For $i=1,\ldots,n$, $\tilde{x}_i\geq x_i$ if $h$ is non-decreasing in the $i$-th component, and $\tilde{x}_i\leq x_i$ if $h$ is non-increasing in the $i$-th component.

By construction we have $|h(\tilde{\mathbf{x}})-h(\mathbf{x})|<2\epsilon$ and $|h_m(\tilde{\mathbf{x}})-h(\tilde{\mathbf{x}})|<\epsilon$ when $m$ is large enough.

Now
\begin{eqnarray*}
&&|h_m(\mathbf{x})-h_m(\tilde{\mathbf{x}})|\\
&=&h_m(\tilde{\mathbf{x}})-h_m(\mathbf{x})\text{\ \ by our choice of $\tilde{\mathbf{x}}$ and monotone property of $h_m$}\\
&\leq&h_m(\tilde{\mathbf{x}})-h_m(\tilde{\tilde{\mathbf{x}}})\text{\ \ where $\tilde{\tilde{\mathbf{x}}}$ is chosen to be the closet point to $\mathbf{x}$ in $\mathcal{E}$ that satisfies:}\\
&&\ \ \ \ \ \ \ \ \ \ \ \ \ \ \ \ \ \ \ \ \ \ \text{For $i=1,\ldots,n$, $\tilde{\tilde{x}}_i\leq x_i$ if $h$ is non-decreasing in the $i$-th component, and}\\
&&\ \ \ \ \ \ \ \ \ \ \ \ \ \ \ \ \ \ \ \ \ \ \text{$\tilde{\tilde{x}}_i\geq x_i$ if $h$ is non-increasing in the $i$-th component.}\\
&\leq&|h_m(\tilde{\mathbf{x}})-h(\tilde{\mathbf{x}})|+|h(\tilde{\mathbf{x}})-h(\tilde{\tilde{\mathbf{x}}})|+|h_m(\tilde{\tilde{\mathbf{x}}})-h(\tilde{\tilde{\mathbf{x}}})|\\
&\leq&\epsilon+2\epsilon+\epsilon
\end{eqnarray*}
when $m$ is large enough.

Combining the above, we have $|h_m(\mathbf{x})-h(\mathbf{x})|\leq7\epsilon$ for all $x\in\mathcal{D}$. Hence the conclusion.

\end{proof}
%\bigskip

\begin{proof}[Proof of Lemma \ref{uniformity}]
For convenience write $\psi_s(\theta;w,z,t)=\log Ee^{\bar{Q}_{w,z}^\infty[t,\infty]}$ and
$$\psi(\theta;w,z,t)=\int_w^z\psi_N(\log(e^\theta\bar{F}(t-u)+F(t-u)))du$$
defined for $\theta\in[\theta_\infty,\theta_T]$, $t\geq T$ and $0\leq w\leq z\leq t+\eta$ for some $\eta>0$. We can extend the domain by putting $\psi_s(\theta;w,z,t)=\psi_s(\theta;w,t+\eta,t)$ and $\psi(\theta;w,z,t)=\psi(\theta;w,t+\eta,t)$ for $z>t+\eta$, and $\psi_s(\theta;w,z,t)=\psi(\theta;w,z,t)=0$ for $w>z$.

Note that $\psi_s(\theta;w,z,t)$ defined as such is non-decreasing in $\theta$, non-increasing in $w$, non-decreasing in $z$ and non-increasing in $t$. Also, $\psi_s(\theta;w,z,t)\to\psi(\theta;w,z,t)$ pointwise with $\psi(\theta;w,z,t)$ continuous. Hence the convergence is uniform over the compact set $\theta\in[\theta_\infty,\theta_T]$ and $(w,z,t)\in[0,K+\eta]\times[0,K+\eta]\times[0,K]$ by Lemma \ref{monotone}, for any $K>0$. By our construction we can extend the set of uniform convergence to $(w,z,t)\in[0,\infty)^2\times[0,K]$.

We now choose $K$ as follows. Given $\epsilon>0$, there exists $K>0$ such that for all $t>K$, $z\leq t-K$, we have
\begin{align*}
\psi(\theta;w,z,t)&=\int_w^z\psi_N(\log(e^\theta\bar{F}(t-u)+F(t-u)))du\\
&=\int_{t-z}^{t-w}\psi_N(\log(e^\theta\bar{F}(u)+F(u)))du\\
&\leq\int_K^\infty\psi_N(\log(e^\theta\bar{F}(u)+F(u)))du\\
&\leq C_1\lambda\int_K^\infty\log(1+(e^\theta-1)\bar{F}(u))du\\
&\leq C_2\lambda\int_K^\infty\bar{F}(u)du\\
&<\epsilon
\end{align*}
for some $C_1,C_2>0$, uniformly over $\theta\in[\theta_\infty,\theta_T]$. Hence for $z\leq t-K$, $\psi_s(\theta;w,z,t)\leq\psi_s(\theta;0,t-K,t)\to\psi(\theta;0,t-K,t)<\epsilon$ uniformly over $\theta\in[\theta_\infty,\theta_T]$ and so $|\psi_s(\theta;w,z,t)-\psi(\theta;w,z,t)|<3\epsilon$ for large enough $s$.

For $z>t-K$, we write
$$\psi_s(\theta;w,z,t)=\frac{1}{s}\log Ee^{\theta\bar{Q}_{w,t-K}^\infty[t,\infty]I(w< t-K)+\theta\bar{Q}_{(t-K)\vee w,z}^\infty[t,\infty]}$$
which is bounded from above by
\begin{eqnarray*}
&&\frac{1}{s}\log\left(Ee^{\theta\bar{Q}_{w,t-K}^\infty[t,\infty]I(w< t-K)}E_0e^{\theta\bar{Q}_{0,(z-t+K)\wedge(z-w)}^\infty[K,\infty]}\right)\\
&=&\psi_s(\theta;w,t-K,t)I(w<t-K)+\frac{1}{s}\log E_0e^{\theta\bar{Q}_{0,(z-t+K)\wedge(z-w)}^\infty[K,\infty]}
\end{eqnarray*}
and bounded from below by
\begin{eqnarray}
&&\frac{1}{s}\log\left(Ee^{\theta\bar{Q}_{0,t-K}^\infty[t,\infty]I(w<t-K)}E_{00}e^{\theta\bar{Q}_{0,(z-t+K)\wedge(z-w)}^\infty[K,\infty]}\right) \notag\\
&=&\psi_s(\theta;w,t-K,t)I(w<t-K)+\frac{1}{s}\log E_{00}e^{\theta\bar{Q}_{0,(z-t+K)\wedge(z-w)}^\infty[K,\infty]} \label{lower bound1}
\end{eqnarray}
where $E_0[\cdot]$ denotes the expectation conditioned that a customer arrives at time 0 and is counted in $\bar{Q}_{0,(z-t+K)\wedge(z-w)}^\infty[t,\infty]$, while $E_{00}[\cdot]$ denotes the expectation conditioned on delayed arrival with tail distribution (in the basic scale) given by $\sup_b P(U^0-b>x|U^0-b)$. Note that $\sup_b P(U^0-b>x|U^0>b)$ is a valid tail distribution because of the light-tail assumption on $U^0$. Indeed, it is obvious that $\sup_b P(U^0-b>0|U^0>b)=1$, and by the same argument following that of \eqref{d p}, we have $\sup_b P(U^0-b>x|U^0>b)\leq e^{-cx}\to0$ for some $c>0$. Moreover, it is obvious that $\sup_b P(U^0-b>x|U^0>b)$ is non-increasing. Now by construction this tail distribution is stochastically at most as large as $P(U^0-b>x|U^0>b)$ for any $b\geq0$, and hence \eqref{lower bound1}. Note that $\frac{1}{s}\log E_0e^{\theta\bar{Q}_{0,(z-t+K)\wedge(z-w)}^\infty[K,\infty]}$ and $\frac{1}{s}\log E_{00}e^{\theta\bar{Q}_{0,(z-t+K)\wedge(z-w)}^\infty[K,\infty]}$ both converge to $\psi(\theta;0,(z-t+K)\wedge(z-w),K)$ uniformly by the argument earlier (as a special case when $t\leq K$). Also we have shown that $\psi_s(\theta;w,t-K,t)$ converges to $\psi_s(\theta;w,t-K,t)$ uniformly for $t>K$ (as a special case when $z\leq t-K$ and $t>K$). The sandwich argument concludes the lemma.
\end{proof}

\subsection{Proof of Lemma \protect\ref{multivariate psi}}

Consider
\begin{eqnarray*}
&&\frac{1}{s}\log E\exp \left\{ \sum_{k=1}^{n}\left( \sum_{j=k}^{n}\theta
_{kj}Q_{(k-1)\Delta ,k\Delta }^{\infty }[(j-1)\Delta ,j\Delta ]+\theta
_{k\cdot }Q_{(k-1)\Delta ,k\Delta }^{\infty }[n\Delta,\infty]\right) \right\}
\\
&=&\frac{1}{s}\log E\exp \Bigg\{ \sum_{k=1}^{n}\Bigg( \sum_{j=k}^{n}\theta
_{kj}\sum_{i=N_{s}((k-1)\Delta )+1}^{N_{s}(k\Delta )}I((j-1)\Delta
<V_{i}+A_{i}\leq j\Delta ){} \\
&&{}+\theta _{k\cdot }\sum_{i=N_{s}((k-1)\Delta )+1}^{N_{s}(k\Delta
)}I(V_{i}+A_{i}>n\Delta)\Bigg) \Bigg\} \\
&=&\frac{1}{s}\log E\prod_{k=1}^{n}\prod_{i=N_{s}((k-1)\Delta
)+1}^{N_{s}(k\Delta )}\left( \sum_{j=k}^{n}e^{\theta _{kj}}P((j-1)\Delta
<V_{i}+A_{i}\leq j\Delta )+e^{\theta _{k\cdot }}\bar{F}(n\Delta-A_{i})\right)
\\
&=&\frac{1}{s}\log E\exp \left\{ \sum_{k=1}^{n}\int_{(k-1)\Delta }^{k\Delta
}h_{k}(u)dN_{s}(u)\right\}
\end{eqnarray*}%
where
\begin{equation*}
h_{k}(u)=\log \left( \sum_{j=k}^{n}e^{\theta _{kj}}P((j-1)\Delta
<V_{i}+u\leq j\Delta )+e^{\theta _{k\cdot }}\bar{F}(n\Delta-u)\right)
\end{equation*}%
Now
\begin{eqnarray*}
&&\frac{1}{s}\log E\exp \left\{ \sum_{k=1}^{n}\sum_{w=1}^{m}h_{k}(\underline{%
\zeta }_{kw})\left[ N_{s}\left( (k-1)\Delta +\frac{w\Delta }{m}\right)
-N_{s}\left( (k-1)\Delta +\frac{(w-1)\Delta }{m}\right) \right] \right\} \\
&\leq &\frac{1}{s}\log E\exp \left\{ \sum_{k=1}^{n}\int_{(k-1)\Delta
}^{k\Delta }h_{k}(u)dN_{s}(u)\right\} \\
&\leq &\frac{1}{s}\log E\exp \left\{ \sum_{k=1}^{n}\sum_{w=1}^{m}h_{k}(%
\overline{\zeta }_{kw})\left[ N_{s}\left( (k-1)\Delta +\frac{w\Delta }{m}%
\right) -N_{s}\left( (k-1)\Delta +\frac{(w-1)\Delta }{m}\right) \right]
\right\}
\end{eqnarray*}%
where $\underline{\zeta }_{kw}=\text{argmin}\{h_{k}(u):(k-1)\Delta
+(w-1)\Delta /m\leq u\leq (k-1)\Delta +w\Delta /m\}$ and $\overline{\zeta }%
_{kw}=\text{argmax}\{h_{k}(u):(k-1)\Delta +(w-1)\Delta /m\leq u\leq
(k-1)\Delta +w\Delta /m\}$. The existence of $\underline{\zeta }_{kw}$ and $%
\overline{\zeta }_{kw}$ is guaranteed by the continuity of $h_{k}(\cdot )$,
which is implied by our assumption that $V_{i}$ has density.

Letting $s\rightarrow \infty $ and by \eqref{increment property} we have
\begin{eqnarray*}
\sum_{k=1}^{n}\sum_{w=1}^{m}\psi _{N}(h_{k}(\underline{\zeta }_{kw}))\frac{%
\Delta }{m} &\leq &\liminf_{s\rightarrow \infty }\frac{1}{s}\log E\exp
\left\{ \sum_{k=1}^{n}\int_{(k-1)\Delta }^{k\Delta }h_{k}(u)dN_{s}(u)\right\}
\\
&\leq &\limsup_{s\rightarrow \infty }\frac{1}{s}\log E\exp \left\{
\sum_{k=1}^{n}\int_{(k-1)\Delta }^{k\Delta }h_{k}(u)dN_{s}(u)\right\} \\
&\leq &\sum_{k=1}^{n}\sum_{w=1}^{m}\psi _{N}(h_{k}(\overline{\zeta }_{kw}))%
\frac{\Delta }{m}
\end{eqnarray*}%
By continuity of $h_{k}\left( \cdot \right) $ and $\psi _{N}\left( \cdot
\right) $, $\psi _{N}\left( h_{k}(\cdot )\right) $ is Riemann integrable.
Letting $m\rightarrow \infty $ yields the conclusion.

%\bigskip

\subsection{Proof of Lemma \ref{CLT1} and \ref{limiting process}}

Our goal here is to prove Lemma \ref{CLT1}, via Lemma \ref{limiting process}. For convenience let $G(y)=\left(\int_y^\infty\bar{F}(u)du\right)^{1/(2+\eta)}$ where $\eta$ is defined in \eqref{nu}. Note that by L'Hospital's rule and Assumption \eqref{light-tail assumption}, we have
\begin{equation}
\lim_{y\to\infty}\frac{y\bar{F}(y)}{G(y)}=\lim_{y\to\infty}\frac{\bar{F}(y)-yf(y)}{-\bar{F}(y)}=\lim_{y\to\infty}(yh(y)-1)=\infty \label{property}
\end{equation}

As discussed before, the key step to show Lemma \ref{CLT1} is an estimate of the limiting Gaussian process given by Lemma \ref{limiting process}. The proof of this inequality takes three steps. We first consider the case when $i=1$. The first step is to define a $d$-metric (in fact a pseudo-metric)
\begin{equation}
d_1((t,y),(t^{\prime },y^{\prime }))=E(\tilde{R}_1(t,y)-\tilde{R}_1(t,y))^2
\label{d-metric}
\end{equation}
where $\tilde{R}_1(t,y)=R_1(t,y)/\nu(y)$ and show that the domain is compact under this (pseudo) metric. Then we can prove that the Gaussian process $\tilde{R}_1(t,y)$ is a.s. bounded by an entropy argument. The third step is an invocation of Borell's inequality.

For convenience let $S=[0,t_0]\times[0,\infty)$.

Before these steps, we need an estimate of the $d$-metric:

\begin{lemma}
Let $(t,y)$ and $(t^{\prime },y^{\prime })$ be two points on $%
[0,t_0]\times[0,\infty)$. Without loss of generality assume $t+y\leq
t^{\prime }+y^{\prime }$. Then
\begin{eqnarray}
&&\frac{\lambda\int_0^{t_2}(\bar{F}(t+y-u)-\bar{F}(t'+y'-u))(1+F(t+y-u)-F(t'+y'-u))du}{\nu(y)^2}{} \nonumber\\
&&{}+\lambda\int_0^{t_2}\bar{F}(t'+y'-u)F(t'+y'-u)du\cdot\left(\frac{1}{\nu(y)}-\frac{1}{\nu(y')}\right)^2{} \nonumber\\
&&{}+\frac{\lambda\int_{t_2}^{t_1}\bar{F}(t_1+y_1-u)F(t_1+y_1-u)du}{\nu(y_1)^2} \label{metric}
\end{eqnarray}
where $t_1=t\vee t^{\prime }$ and $y_1$ is
the corresponding $y$ or $y^{\prime }$. \label{metric lemma}
\end{lemma}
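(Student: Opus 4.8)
The plan is to reduce the claimed estimate to a direct second-moment computation for the Gaussian field $R_1$, followed by an elementary algebraic regrouping; only an upper bound of the stated form is needed (it will feed the entropy/compactness step in Lemma \ref{limiting process}), so it is harmless that \eqref{metric} need not equal $d_1$ exactly. First I would record the covariance structure of $R_1$. By \eqref{R_1}, $R_1(t,y)$ is a Wiener integral against the Gaussian random measure generated by $K(u,x)=W(\lambda u,F(x))-F(x)W(\lambda u,1)$; this measure has independent increments in the arrival-time coordinate $u$ (at rate $\lambda$) and, for each frozen $u$, carries the structure of a Brownian bridge run along $F$ in the service coordinate $x$ (it is tied down: $K(u,\infty)=0$). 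Hence for $u\le t$ the inner integral $\int_0^\infty I(u+x>t+y)\,dK(u,x)$ equals minus the bridge evaluated at $t+y-u$, and a routine computation yields
\begin{equation*}
\mathrm{Cov}\big(R_1(t,y),R_1(t',y')\big)=\lambda\int_0^{t\wedge t'}\Big[\bar F\big((t+y)\vee(t'+y')-u\big)-\bar F(t+y-u)\,\bar F(t'+y'-u)\Big]\,du,
\end{equation*}
with the diagonal special case $\mathrm{Var}(R_1(t,y))=\lambda\int_0^t\bar F(t+y-u)F(t+y-u)\,du$ (consistent with the stationary variance of Pang and Whitt (2009) after letting $t\to\infty$).

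Then I would expand $d_1((t,y),(t',y'))=\mathrm{Var}(R_1(t,y))/\nu(y)^2-2\,\mathrm{Cov}(\cdot,\cdot)/(\nu(y)\nu(y'))+\mathrm{Var}(R_1(t',y'))/\nu(y')^2$ and use the normalization $t+y\le t'+y'$. Under this ordering, on $u\in[0,t\wedge t']$ one has $\bar F((t+y)\vee(t'+y')-u)=\bar F(t'+y'-u)$, so writing $p:=\bar F(t+y-u)\ge q:=\bar F(t'+y'-u)$ the covariance integrand is $q(1-p)$. I would split the variance integral of the longer-horizon point ($t_1=t\vee t'$, with $y_1$ the matching second index) as $\int_0^{t\wedge t'}+\int_{t\wedge t'}^{t\vee t'}$; the second piece is exactly the third summand of \eqref{metric}.

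Finally, on $[0,t\wedge t']$ I would combine the three remaining pieces. Their integrand is $\frac{p(1-p)}{\nu(y)^2}-\frac{2q(1-p)}{\nu(y)\nu(y')}+\frac{q(1-q)}{\nu(y')^2}$, which, completing the square in $1/\nu(y)$ and $1/\nu(y')$, rewrites as
\begin{equation*}
\frac{(p-q)(1-p)}{\nu(y)^2}+q(1-p)\Big(\frac1{\nu(y)}-\frac1{\nu(y')}\Big)^2+\frac{q(p-q)}{\nu(y')^2}.
\end{equation*}
Using $1-p\le 1-q$ bounds the middle term by $q(1-q)\big(\nu(y)^{-1}-\nu(y')^{-1}\big)^2$, the integrand of the second summand of \eqref{metric}; using monotonicity of $\nu$ to pass from $\nu(y')^{-2}$ to $\nu(y)^{-2}$ in the last term, one gets $\frac{(p-q)(1-p)}{\nu(y)^2}+\frac{q(p-q)}{\nu(y)^2}=\frac{(p-q)(1-(p-q))}{\nu(y)^2}$, which is the integrand of the first summand since $p-q=\bar F(t+y-u)-\bar F(t'+y'-u)$ and $1-(p-q)=1+F(t+y-u)-F(t'+y'-u)$. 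Integrating in $u$ and re-attaching the $[t\wedge t',t\vee t']$ contribution produces the asserted estimate.

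The main obstacle is the first step: computing the covariance of the Wiener integral against the non-product measure $K$ correctly, carrying the tied-down (Brownian-bridge) structure in the service coordinate and the $\lambda$-normalization through the calculation; and in the last step, arranging the labeling of the two points so that the leftover term $q(p-q)/\nu(y')^2$ is absorbed with the favorable sign — the monotonicities of $\bar F$ and of $\nu$ must be deployed in exactly the right direction, which is the delicate bookkeeping of the argument.
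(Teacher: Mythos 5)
Your covariance computation for $R_1$ and the exact regrouping on $[0,t_2]$ are correct: with $p=\bar{F}(t+y-u)$, $q=\bar{F}(t'+y'-u)$ the cross-covariance density is indeed $\lambda q(1-p)$, and the per-$u$ integrand of $d_1$ equals $(p-q)(1-p)/\nu(y)^2+q(1-p)\left(\nu(y)^{-1}-\nu(y')^{-1}\right)^2+q(p-q)/\nu(y')^2$. This is a more direct route than the paper's, which discretizes $\tilde{R}_1$ along a partition of $[0,t_0]$, quotes the second-moment formulas (5.4)--(5.5) of Krichagina and Puhalskii (1999) for the discretized integrals, and passes to the mesh limit. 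The gap is in your final step: absorbing the leftover $q(p-q)/\nu(y')^2$ into the $\nu(y)^{-2}$ term needs $\nu(y')\geq\nu(y)$, i.e. $y\geq y'$, since $\nu$ in \eqref{nu} is non-increasing; but the only normalization in force is $t+y\leq t'+y'$, which does not imply $y\geq y'$ (take $t=t'$ and $y<y'$). In that regime the inequality runs the wrong way, and no relabelling helps because the roles of $(t,y)$ and $(t',y')$ are already pinned by $t+y\leq t'+y'$. Worse, your own exact identity shows that \eqref{metric} minus $d_1$ equals $2\lambda\int_0^{t_2}q(p-q)\,\nu(y)^{-1}\left(\nu(y)^{-1}-\nu(y')^{-1}\right)du$, which is strictly negative for, say, exponential service times with $t=t'$, $y<y'$; so the pointwise bound $d_1\leq\,$\eqref{metric} you are aiming for is not available at all in that case, not merely unproved.

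What can be salvaged cheaply, and is all the sequel needs: split $\tilde{R}_1(t,y)-\tilde{R}_1(t',y')$ into (i) $\nu(y)^{-1}$ times the difference of the two indicator integrals over $[0,t_2]$, (ii) $\left(\nu(y)^{-1}-\nu(y')^{-1}\right)$ times the second integral over $[0,t_2]$, and (iii) the independent tail contribution on $[t_2,t_1]$. Your bridge computation gives exactly $E\left(B(F(a))-B(F(a'))\right)^2=(p-q)\left(1-(p-q)\right)$ and $E\,B(F(a'))^2=q(1-q)$, so the second moments of (i) and (ii) are precisely the first two summands of \eqref{metric} and the variance of (iii) is the third; then $(a+b)^2\leq2(a^2+b^2)$ yields $d_1\leq2\times$\eqref{metric}, which suffices for Lemmas \ref{compactification} and \ref{entropy} (and hence Lemma \ref{limiting process}), where only the order of magnitude of the metric matters. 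Alternatively, retain the cross term $-2(p-q)q\,\nu(y)^{-1}\left(\nu(y)^{-1}-\nu(y')^{-1}\right)$ and you recover the exact expression that the paper's discretization argument is in effect computing.
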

%\bigskip

The proof of this lemma follows the approach in Lemma 5.1 of Krichagina and
Puhalskii (1999). Hence we only sketch the proof here:
\begin{proof}(Sketch)
Recall that
$$\tilde{R}_1(t,y)=\frac{\int_0^t\int_0^\infty I(u+x>t+y)dK(u,x)}{\nu(y)}$$
For a partition $\{u_0=0,u_1,u_2,\ldots,u_k\}$ of $[0,t_0]$, define
$$I_{k,t+y}(u,x)=\sum_{i=1}^kI(u\in(u_{i-1},u_i])I(x>t+y-u_i)$$
Let
$$\tilde{R}_1^k(t,y)=\frac{\int_0^t\int_0^\infty I_{k,t+y}(u,x)dK(u,x)}{\nu(y)}$$
be a discretized  version of $\tilde{R}_1(t,y)$. One can check that $\tilde{R}_1^k(t,y)$ converges to $\tilde{R}_1(t,y)$ in mean square as the mesh of the partition goes to 0.

Now take $(t,y)$ and $(t',y')$ in $S$ such that $t+y\leq t'+y'$. Define $t_1=t\vee t'$ and $y_1$ be the corresponding $y$ or $y'$, and define $t_2=t\wedge t'$ and $y_2$ be the corresponding $y$ or $y'$. Also define $\bar{k}$ such that $u_{\bar{k}}\leq t_1$ while $u_{\bar{k}+1}>t_1$. Using (5.4) and (5.5) in Krichagina and Puhalskii (1999), we have
\begin{eqnarray*}
&&E(\tilde{R}_1^k(t,y)-\tilde{R}_1^k(t',y'))^2\\
&=&\sum_{i=1}^{\bar{k}}\frac{1}{\nu(y)^2}\lambda(u_i-u_{i-1})(F(t'+y'-u_i)-F(t+y-u_i))(1+F(t+y-u_i)-F(t'+y'-u_i)){}\\
&&{}+\sum_{i=1}^{\bar{k}}\left(\frac{1}{\nu(y)}-\frac{1}{\nu(y')}\right)^2\lambda(u_i-u_{i-1})\bar{F}(t'+y'-u_i)F(t'+y'-u_i){}\\
&&{}+\sum_{i=\bar{k}+1}^k\frac{1}{\nu(y_1)^2}\lambda(u_i-u_{i-1})\bar{F}(t_1+y_1-u_i)F(t_1+y_1-u_i)\\
&&{}+o(1)
\end{eqnarray*}
which converges to \eqref{metric} as the mesh goes to 0.
\end{proof}
%\bigskip

\begin{lemma}
We can compactify the space $[0,t_0]\times[0,\infty]$ with the $d$-metric
defined in \eqref{d-metric}. \label{compactification}
\end{lemma}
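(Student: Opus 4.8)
The plan is to reduce the statement to total boundedness: I will show that the pseudometric space $([0,t_0]\times[0,\infty),d_1)$, with $d_1$ as in \eqref{d-metric}, is totally bounded. Its completion is then a compact pseudometric space into which $[0,t_0]\times[0,\infty)$ embeds densely and isometrically, and that completion is precisely the sought compactification (the adjoined points play the role of $y=\infty$; this is all one needs for the entropy step leading to Lemma~\ref{limiting process}). Since a finite union of totally bounded sets is totally bounded, it suffices to prove: (i) for each fixed $N$, the rectangle $[0,t_0]\times[0,N]$ is totally bounded under $d_1$; and (ii) given $\epsilon>0$ there is an $N$ so large that the tail strip $[0,t_0]\times[N,\infty)$ has $d_1$-diameter at most $\epsilon$. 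Granting these, the tail strip lies in a single $\epsilon$-ball while $[0,t_0]\times[0,N]$ is covered by finitely many, so the whole space is totally bounded.

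For (i) I would invoke the explicit formula for $d_1$ provided by Lemma~\ref{metric lemma}. On the compact rectangle $[0,t_0]\times[0,N]$ the function $\nu(\cdot)$ from \eqref{nu} is continuous and bounded away from $0$ (here the unbounded support, $\eta>0$ case is the relevant one), and $F,\bar F$ are uniformly continuous; hence each of the three integrals in \eqref{metric} depends continuously on the pair of points, by dominated convergence, so $d_1$ is jointly continuous with respect to the Euclidean metric. The identity map from $([0,t_0]\times[0,N],\text{Euclidean})$ to $([0,t_0]\times[0,N],d_1)$ is then continuous, and a continuous image of a compact set is compact, hence totally bounded.

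The crux is (ii), the tail estimate, and it is exactly here that the slower-than-CLT normalization built into \eqref{nu} is used. Write $m(y)=\int_y^\infty\bar F(u)\,du$, so $\nu(y)=(\lambda m(y))^{1/(2+\eta)}$ and $\nu(y)\to0$; thus $1/\nu(y)$ blows up as $y\to\infty$ and the terms of \eqref{metric} cannot be bounded termwise by their numerators alone. For $t,t'\in[0,t_0]$ and $y,y'\ge N$, every argument of $\bar F$ occurring in \eqref{metric} is at least $N-t_0$, so, using monotonicity of $\bar F$, $F\le1$ and $(1/\nu(y)-1/\nu(y'))^2\le 1/\nu(y\vee y')^2$, each of the three summands is bounded by a quotient of the type $\lambda\,m(N-t_0)/\nu(N)^2=\lambda^{1-2/(2+\eta)}m(N-t_0)/m(N)^{2/(2+\eta)}$ (for the first summand the numerator $m(N-t_0)$ is replaced by the smaller $\int_0^{t_2}(\bar F(t+y-u)-\bar F(t'+y'-u))\,du$). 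One therefore only needs $m(N-t_0)=o(m(N)^{2/(2+\eta)})$; the key input is \eqref{property}, i.e.\ $y\bar F(y)/G(y)\to\infty$ with $G(y)=m(y)^{1/(2+\eta)}$, which combined with $m(N-t_0)\le m(N)+t_0\bar F(N-t_0)$ and the crude lower bound $m(N)\ge\bar F(N+1)$ yields this decay, the exponent $2/(2+\eta)<1$ supplying exactly the slack. Carrying the estimates through gives $\sup_{t,t'\in[0,t_0]}d_1((t,y),(t',y'))\to0$ as $N\to\infty$, which is (ii).

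I expect step (ii) to be the main obstacle: the delicate point is the competition between the blow-up of $1/\nu(y)$ and the decay of the $\bar F$-numerators, and this is precisely where the choice of the exponent $2+\eta$ (and the hazard-rate hypothesis \eqref{light-tail assumption} underlying \eqref{property}) must be used rather than the naive CLT scaling $\mathrm{sd}(R(\infty,\cdot))$. Steps (i)–(ii) together give total boundedness, hence a compact completion, proving Lemma~\ref{compactification}; this compactness is in turn what the subsequent entropy argument for Lemma~\ref{limiting process} relies on.
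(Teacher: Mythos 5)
Your step (ii), the tail estimate, contains a genuine error, and it is the crux of your argument. First, when you claim that ``each of the three summands is bounded by a quotient of the type $\lambda\,m(N-t_0)/\nu(N)^2$'' (writing $m(y)=\int_y^\infty\bar F(u)du$), you have replaced the actual denominators $\nu(y)^2$, $\nu(y\vee y')^2$, $\nu(y_1)^2$ with $y,y',y_1\ge N$ by $\nu(N)^2$; since $\nu$ is decreasing, this makes the denominator \emph{larger}, so the resulting quotient is not an upper bound for the summands --- after decoupling the $y$-argument of the numerator from that of the denominator, the worst case is numerator at $N$ against denominator at $y\to\infty$, which is useless. Second, the decay $m(N-t_0)=o(m(N)^{2/(2+\eta)})$ that your reduction requires is false in general under \eqref{light-tail assumption}, and \eqref{property} cannot deliver it: \eqref{property} is a \emph{lower} bound on $\bar F$ relative to $G$, whereas you need an \emph{upper} bound on $\bar F(N-t_0)$ by $m(N)^{2/(2+\eta)}$. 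For example $\bar F(y)=e^{-e^y}$ satisfies $yh(y)\to\infty$, yet $m(N)\sim e^{-e^N}e^{-N}$ while $\bar F(N-t_0)=e^{-e^{-t_0}e^N}$, so whenever $e^{-t_0}>\!\!\!\!\;\not\;\; 2/(2+\eta)$ fails (e.g.\ $\eta=1$, $t_0=2$) the ratio $\bar F(N-t_0)/m(N)^{2/(2+\eta)}$ blows up.

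The conclusion of (ii) is nevertheless true, but it must be proved with matched arguments rather than by decoupling. Since $t'+y'-u\ge y\vee y'$ for $u\le t_2$ and $t_1+y_1-u\ge y_1$ for $u\le t_1$, each summand of \eqref{metric} is bounded by a quantity of the form $\lambda\,m(\tilde y)/\nu(\tilde y)^2=\lambda^{\eta/(2+\eta)}m(\tilde y)^{\eta/(2+\eta)}$ with $\tilde y\ge N$, which tends to $0$ uniformly as $N\to\infty$; equivalently, and more simply, $d_1((t,y),(t',y'))\le 2E\tilde R_1(t,y)^2+2E\tilde R_1(t',y')^2\le 4\lambda^{\eta/(2+\eta)}m(N)^{\eta/(2+\eta)}$, using the variance bound appearing in the proof of Lemma \ref{Borell}; this is exactly the paper's estimate \eqref{estimate} in the proof of Lemma \ref{entropy}, and no comparison between $m(N-t_0)$ and $m(N)^{2/(2+\eta)}$ is needed anywhere. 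With that repair your route (total boundedness of the finite rectangles plus a vanishing-diameter tail strip, then passing to the compact completion) does go through and is a genuinely different packaging from the paper's proof, which instead adjoins the points $y=\infty$, extends $d_1$ to them, verifies continuity there, and realizes the space as the continuous image of the compact rectangle $[0,t_0]\times[0,\pi/2]$ under $(t,x)\mapsto(t,\tan x)$; as submitted, however, the key estimate is not established.
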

%\bigskip

\begin{proof}
Consider the mapping $(i,\tan):[0,t_0]\times[0,\pi/2]\to[0,t_0]\times[0,\infty]$, where $i$ is the identity map. Here the domain is equipped with the Euclidean metric while the image is equipped with the $d$-metric. We will show that the mapping $(i,\tan)$ is continuous and well-defined over its domain, including the points $(t,x)$ where $x=\pi/2$, and hence its image is compact.

Suppose first that $(t,x)\to(t^*,x^*)$ where $x\neq\pi/2$. Since $\tan(\cdot)$ is continuous, and $\int_{y}^{t+y}\bar{F}(u)du$ and $\nu(y)$ are continuous in $t$ and $y$ (under Euclidean metric), it is easy to see that $d_1((t,\tan x),(t^*,\tan x^*))\to0$ by using \eqref{metric}.

We now show that $d_1(\cdot,\cdot)$ is still a (pseudo) metric when including the points $(t,y)$ with $y=\infty$. Define, for $y'=\infty$, that
\begin{eqnarray*}
&&d_1((t,y),(t',y'))\\
&=&\frac{\lambda\int_0^{t_2}\bar{F}(t+y-u)(1+F(t+y-u))du}{\nu(y)^2}+\left\{\begin{array}{ll}\frac{\lambda\int_{t'}^t\bar{F}(t+y-u)F(t+y-u)du}{\nu(y)^2}&\text{\ if\ }t>t'\\
0&\text{\ if\ }t\leq t'\end{array}\right.
\end{eqnarray*}
and $d_1((t,y),(t',y'))=0$ if $y=y'=\infty$. It is straightforward to check that $d_1(\cdot,\cdot)$ is continuous at $y'=\infty$ by using \eqref{metric} (note that the second term of \eqref{metric} goes to 0 since for $y'$ large enough it is less than or equal to $\lambda\int_{y'+(t'-t_2)}^{y'+t'}\bar{F}(du)du/\nu(y')^2\leq\lambda G(y)^{1-2/(2+\eta)}\to0$). Hence both the communtativity and triangle inequality hold also at $y'=\infty$, which implies that $d_1(\cdot,\cdot)$ is a pseudo-metric on $[0,t_0]\times[0,\infty]$. Now consider $x^*=\pi/2$. It is now easy to see that $d_1((t,\tan x),(t^*,\infty))\to0$ as $(t,x)\to(t^*,\pi/2)$.
\end{proof}
%\bigskip

\begin{lemma}
$E\sup_S\tilde{R}_1(t,y)<\infty$. In particular, $\tilde{R}_1(t,y)$ is a.s.
bounded over $S$. \label{entropy}
\end{lemma}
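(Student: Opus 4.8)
The plan is to invoke the classical Dudley metric-entropy bound for the
supremum of a centered Gaussian field (see Adler (1990)). Since
$\tilde R_1(\cdot,\cdot)$ is a centered Gaussian field and, by Lemma
\ref{compactification}, its index set $S$ sits in the compact pseudo-metric
space $([0,t_0]\times[0,\infty],d_1)$, Dudley's inequality gives
\begin{equation*}
E\sup_S\tilde R_1(t,y)\le C\int_0^{D}\sqrt{\log N(\epsilon)}\,d\epsilon,
\end{equation*}
where $D<\infty$ is the $d_1$-diameter of $S$ (finite by Lemma
\ref{compactification}) and $N(\epsilon)=N(S,d_1,\epsilon)$ is the minimal
number of $d_1$-balls of radius $\epsilon$ covering $S$. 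It therefore suffices
to control the growth of $\log N(\epsilon)$ as $\epsilon\to0$; a.s.
boundedness of $\tilde R_1$ over $S$ is then immediate.

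The crucial feature is that dividing by $\nu(y)$ makes the field vanish at
infinity. A direct variance computation (cf. the proof of Lemma
\ref{metric lemma}) gives
\begin{equation*}
\sigma^2(t,y):=E\tilde R_1(t,y)^2=\frac{\lambda\int_y^{t+y}\bar F(v)F(v)\,dv}{\nu(y)^2}\le\frac{\lambda\int_y^\infty\bar F(v)\,dv}{\nu(y)^2}=\nu(y)^{\eta},
\end{equation*}
using the definition \eqref{nu}; this tends to $0$ as $y\to\infty$. Hence,
given $\epsilon>0$, pick $M=M(\epsilon)$ with $\nu(y)^{\eta}\le\epsilon^2/8$
for all $y\ge M$; because $\bar F$ decays faster than every power law
(inequality \eqref{light tail}), $\int_y^\infty\bar F$ and hence $\nu(y)$
decay faster than every power of $y$, so $M(\epsilon)$ grows at most
polynomially in $1/\epsilon$. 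For $y,y'\ge M$ the bound
$E(\tilde R_1(t,y)-\tilde R_1(t',y'))^2\le 2\sigma^2(t,y)+2\sigma^2(t',y')\le\epsilon^2/2$
shows that the whole tail strip $[0,t_0]\times[M,\infty]$ lies in a single
$d_1$-ball of radius $\epsilon$, so it contributes one element to the cover.

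It remains to cover the bulk $[0,t_0]\times[0,M]$, and here the estimate
\eqref{metric} of Lemma \ref{metric lemma} does the work. On this compact set
$\nu(y)\ge\nu(M)>0$, and bounding the three terms of \eqref{metric} crudely —
using $\int_0^{t_2}|\bar F(t+y-u)-\bar F(t'+y'-u)|\,du\le|(t+y)-(t'+y')|$, the
boundedness of $F$ and $1-F$, and the fact that $1/\nu(\cdot)$ is $C^1$ (hence
Lipschitz) on $[0,M]$ — yields
\begin{equation*}
d_1\big((t,y),(t',y')\big)^2\le C_M\,\rho\big((t,y),(t',y')\big),
\end{equation*}
where $\rho$ is the Euclidean metric on $[0,t_0]\times[0,M]$ and $C_M$ depends
on $M$ only polynomially (through $1/\nu(M)$). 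Thus a Euclidean net of
$[0,t_0]\times[0,M]$ of mesh $\epsilon^2/C_M$ is a $d_1$-$\epsilon$-net, so the
bulk contributes $O\big(t_0M(\epsilon)C_{M(\epsilon)}^2/\epsilon^4\big)$ balls,
which is polynomial in $1/\epsilon$. Combining the two regions,
$\log N(\epsilon)=O(\log(1/\epsilon))$, hence
$\int_0^D\sqrt{\log N(\epsilon)}\,d\epsilon<\infty$ and the lemma follows. The
main obstacle is keeping the bulk bound uniform as $M=M(\epsilon)\to\infty$,
i.e., verifying that $M(\epsilon)$ and the constant $C_{M(\epsilon)}$ both stay
polynomial in $1/\epsilon$ — which they do, thanks to the faster-than-power
decay of $\bar F$ in \eqref{light tail}; the remaining bookkeeping is routine.
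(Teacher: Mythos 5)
Your proposal is correct and follows essentially the same route as the paper: the Dudley/entropy bound for the Gaussian field $\tilde R_1$, collapsing the tail strip $y\ge M(\epsilon)$ into a single $d_1$-ball because $E\tilde R_1(t,y)^2\le\nu(y)^{\eta}\to0$, and covering the bulk with a Euclidean net whose $d_1$-Lipschitz constants grow only polynomially in $1/\epsilon$ thanks to the faster-than-power-law decay in \eqref{light tail}, yielding $\log N(\epsilon)=O(\log(1/\epsilon))$ and a finite entropy integral. The only differences from the paper's argument (which uses the explicit increment estimate \eqref{metric} and the function $G$ in place of your variance/triangle-inequality bookkeeping) are presentational.
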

%\bigskip

\begin{proof}
We use $C$ here to denote constants, not necessarily the same every time it appears. We carry out an entropy argument (see for example Adler (1990))
$$E\sup_S\tilde{R}_1(t,y)\leq K\int_0^\infty H^{1/2}(\epsilon)d\epsilon = K\int_0^{\text{diam}(S)/2}H^{1/2}(\epsilon)d\epsilon$$
where $K>0$ is a universal constant, $H(\epsilon)=\log N(\epsilon)$ with $N(\epsilon)$ the $\epsilon$-th order entropy of $S$ i.e. the minimum number of $\epsilon$-balls (under $d$-metric) to cover $S$, and $\text{diam}(S)$ is the diameter of $S$ given by $\sup_{(t,y),(t',y')\in S} d_1((t,y),(t',y'))$.

As in Lemma \ref{metric lemma}, let $(t,y)$ and $(t',y')$ be two points on $[0,t_0]\times[0,\infty]$ such that $t+y\leq t'+y'$, and let $t_1=t\vee t'$ with $y_1$ the corresponding $y$ or $y'$. Note that from \eqref{metric} we have
\begin{eqnarray}
d_1((t,y),(t',y'))&=&\frac{\lambda\int_0^{t_2}(\bar{F}(t+y-u)-\bar{F}(t'+y'-u))du}{\nu(y)^2}{} \notag\\
&&{}+\lambda\int_0^{t_2}\bar{F}(t'+y'-u)du\left(\frac{1}{\nu(y)}-\frac{1}{\nu(y')}\right)^2+\frac{\lambda\int_{t_2}^{t_1}\bar{F}(t_1+y_1-u)du}{\nu(y_1)^2} \notag\\
&\leq&\frac{\lambda\int_y^{t+y}\bar{F}(u)du}{\nu(y)^2}+\lambda\left(\int_y^{t_0+y}\bar{F}(u)du\wedge\int_{y'}^{t_0+y'}\bar{F}(u)du\right)\left(\frac{1}{\nu(y)}-\frac{1}{\nu(y')}\right)^2{} \notag\\
&&{}+\frac{\lambda\int_{y_1}^{y_1+|t-t'|}\bar{F}(u)du}{\nu(y_1)^2} \notag\\
&\leq&\lambda G(y)^{1-2/(2+\eta)}+\lambda(G(y)^{1-2/(2+\eta)}\vee G(y')^{1-2/(2+\eta)})+\lambda G(y_1)^{1-2/(2+\eta)} \notag\\
&\leq&C(G(y)^{\eta/(2+\eta)}\vee G(y')^{\eta/(2+\eta)}) \label{estimate}
\end{eqnarray}
which implies that $\text{diam}(S)$ is bounded.

Now pick any $\epsilon>0$. Since $G(\cdot)$ is continuous we can define $G^{-1}(\cdot)$ to be the inverse of $G(\cdot)$. From \eqref{estimate} we have $d_1((t,y),(t',y'))<\epsilon$ for $y,y'>G^{-1}((\epsilon/C)^{(2+\eta)/\eta}$ for some constant $C>0$.

Now also note that
\begin{eqnarray*}
d_1((t,y),(t',y'))&\leq&\frac{\lambda\int_0^{t_2}(\bar{F}(t+y-u)-\bar{F}(t'+y'-u))du}{\nu(y)^2}{}\\
&&{}+\lambda\left(\int_y^{t_0+y}\bar{F}(u)du\right)\wedge\left(\int_{y'}^{t_0+y'}\bar{F}(u)du\right)\left(\frac{1}{\nu(y)}-\frac{1}{\nu(y')}\right)^2+\frac{\lambda|t-t'|}{\nu(y_1)^2}\\
&\leq&\frac{C}{\nu(y)^2\wedge\nu(y')^2}(|t-t'|+|y-y'|)+C(G(y)\wedge G(y'))\frac{\bar{F}(\bar{y})^2}{G(\bar{y})^{2(1+1/(2+\eta))}}|y-y'|^2\\
&&\text{\ \ where $\bar{y}$ is between $y$ and $y'$, by mean value theorem on $1/\nu(\cdot)$}\\
&\leq&\frac{C}{G(y)^{2/(2+\eta)}\wedge G(y')^{2/(2+\eta)}}(|t-t'|+|y-y'|)+\frac{C}{G(y)^{1+1/(2+\eta)}\wedge G(y')^{1+1/(2+\eta)}}|y-y'|^2\\
&\leq&\frac{C}{G(y)^{(3+\eta)/(2+\eta)}\wedge G(y')^{(3+\eta)/(2+\eta)}}(|t-t'|+|y-y'|\vee|y-y'|^2)
\end{eqnarray*}

When at least one of $y$ and $y'$ is less than or equal to $G^{-1}((\epsilon/C)^{(2+\eta)/\eta})$, we then get
$$d_1((t,y),(t',y'))\leq\frac{C}{\epsilon^{(3+\eta)/\eta}}(|t-t'|+|y-y'|\vee|y-y'|^2)$$
Hence we can fill up the space $S$ by
$$N(\epsilon)=O\left(\frac{1}{\epsilon^2}\cdot\frac{1}{\epsilon^{(3+\eta)/\eta}}\cdot G^{-1}\left(\left(\frac{\epsilon}{C}\right)^{(2+\eta)/\eta}\right)\right)$$
number of $\epsilon$-balls. By \eqref{light tail} we get that $G(y)\leq C/y^{1/p}$ for any $p>0$, and so $G^{-1}(\epsilon)\leq C/\epsilon^{1/p}$. This gives
$$N(\epsilon)=O\left(\frac{1}{\epsilon^2}\cdot\frac{1}{\epsilon^{(3+\eta)/\eta}}\cdot\frac{1}{\epsilon^p}\right)=O\left(\frac{1}{\epsilon^{2+(3+\eta)/\eta+p}}\right)$$
and hence
$$\int_0^{\text{diam}(S)}H^{1/2}(\epsilon)d\epsilon=O\left(\int_0^C\sqrt{\log\left(\frac{1}{\epsilon}\right)}d\epsilon+C\right)<\infty$$
\end{proof}
%\bigskip

\begin{lemma}
Borell-TIS inequality holds i.e. for $x\geq E\sup_S\tilde{R}_1(t,y)$,
\begin{equation*}
P\left(\sup_S\tilde{R}_1(t,y)\geq x\right)\leq\exp\left\{-\frac{1}{2\sigma_1^2}%
\left(x-E\sup_S\tilde{R}_1(t,y)\right)^2\right\}
\end{equation*}
where
\begin{equation*}
\sigma_1^2=\sup_SE\tilde{R}_1(t,y)^2
\end{equation*}
\label{Borell}
\end{lemma}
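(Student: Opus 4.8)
The plan is to read this off from the classical Borell--TIS inequality (Adler (1990)), whose hypotheses have essentially all been prepared by Lemmas \ref{compactification} and \ref{entropy}. Recall the classical statement: if $\{X_\tau:\tau\in T\}$ is a centered, separable Gaussian process that is a.s.\ bounded on $T$ and has $\sigma^2:=\sup_{\tau\in T}EX_\tau^2<\infty$, then $E\sup_T X_\tau<\infty$ and, for every $x\geq E\sup_T X_\tau$,
\[
P\Big(\sup_{\tau\in T}X_\tau\geq x\Big)\leq\exp\Big\{-\tfrac{1}{2\sigma^2}\big(x-E\sup_{\tau\in T}X_\tau\big)^2\Big\}.
\]
So I would verify, one at a time, that $\tilde R_1$ on $S$ meets the four requirements: centered Gaussian, separable (with a suitable version), a.s.\ bounded, and $\sigma_1^2<\infty$; then the conclusion is immediate.

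First, $R_1(t,y)=\lambda\int_0^t\int_0^\infty I(u+x>t+y)\,dK(u,x)$ is a Wiener integral of a bounded deterministic kernel against the Gaussian orthogonal--increment field $K$; hence each $R_1(t,y)$ is centered Gaussian and the finite--dimensional distributions of the field are jointly Gaussian, and dividing by the deterministic positive function $\nu(y)$ preserves this structure. Second, Lemma \ref{compactification} shows that $[0,t_0]\times[0,\infty]$ is compact under the pseudometric $d_1$, and the finiteness of Dudley's entropy integral obtained inside the proof of Lemma \ref{entropy} furnishes a version of $\tilde R_1$ with $d_1$--uniformly continuous sample paths on this compact space; this version is in particular separable, so the index--set hypotheses of Borell--TIS hold, and I would work with it throughout so that ``$\sup_S$'' is the measurable separable supremum. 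Third, a.s.\ boundedness is immediate: Lemma \ref{entropy} already gives $E\sup_S\tilde R_1(t,y)<\infty$, hence $\sup_S\tilde R_1(t,y)<\infty$ with probability one.

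The only genuinely new computation is the fourth point, $\sigma_1^2<\infty$. From the covariance of $K$ (as used in Lemma \ref{metric lemma}) one gets $\text{var}(R_1(t,y))=\lambda\int_0^t\bar F(t+y-u)F(t+y-u)\,du\leq\lambda\int_0^t\bar F(t+y-u)\,du=\lambda\int_y^{t+y}\bar F(v)\,dv\leq\lambda\int_y^\infty\bar F(v)\,dv$, while $\nu(y)^2=(\lambda\int_y^\infty\bar F(u)\,du)^{2/(2+\eta)}$ by \eqref{nu}. Therefore
\[
E\tilde R_1(t,y)^2=\frac{\text{var}(R_1(t,y))}{\nu(y)^2}\leq\Big(\lambda\int_y^\infty\bar F(u)\,du\Big)^{\eta/(2+\eta)}\leq(\lambda EV)^{\eta/(2+\eta)}<\infty
\]
uniformly over $(t,y)\in S$, since $\int_y^\infty\bar F\leq\int_0^\infty\bar F=EV<\infty$; hence $\sigma_1^2\leq(\lambda EV)^{\eta/(2+\eta)}<\infty$. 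With the four hypotheses in hand, the classical Borell--TIS inequality yields exactly the displayed bound, and the same argument applied to $\tilde R_2$ (using the analogue of Lemma \ref{entropy} for $R_2$) disposes of the $i=2$ case as well.

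I do not anticipate a real obstacle here: the two potentially delicate ingredients, namely compactness of the index set under $d_1$ and almost sure boundedness of the field, were already supplied by Lemmas \ref{compactification} and \ref{entropy}. The remaining care is purely bookkeeping --- confirming the elementary bound on $\sigma_1^2$ and making sure the sample--continuous version produced by the entropy estimate is the one to which Borell--TIS is applied, so that the supremum in the statement is genuinely measurable.
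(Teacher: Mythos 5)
Your proposal is correct and follows essentially the same route as the paper: bound $\sigma_1^2$ via $E\tilde{R}_1(t,y)^2=\lambda\int_0^t\bar{F}(t+y-u)F(t+y-u)du/\nu(y)^2\leq(\lambda\int_y^\infty\bar{F}(u)du)^{\eta/(2+\eta)}$, which is uniformly bounded, use Lemma \ref{entropy} for almost sure boundedness, and invoke the classical Borell--TIS inequality. Your extra care about centeredness, joint Gaussianity and separability is a harmless elaboration of hypotheses the paper treats as routine.
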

%\bigskip

\begin{proof}
Note that
$$E\tilde{R}_1(t,y)^2=\frac{\lambda\int_0^t\bar{F}(t+y-u)F(t+y-u)du}{\nu(y)^2}\leq\frac{\lambda\int_y^{t+y}\bar{F}(u)du}{G(y)^{2/(2+\eta)}}\leq\lambda G(y)^{\eta/(2+\eta)}$$
and so
$$\sigma_1^2=\sup_SE\tilde{R}_1(t,y)\leq C$$
for some constant $C$. By Lemma \ref{entropy} $\tilde{R}_1(t,y)$ is a.s. bounded and Borell-TIS inequality holds.
\end{proof}
\bigskip

We now carry out the same scheme for $R_2(t,y)$. Let $\tilde{R}_2(t,y)=R_2(t,y)/\nu(y)$. Indeed it is straightforward to show that the $d$-metric
of $\tilde{R}_2(t,y)$ is given by
\begin{align}
d_2((t,y),(t^{\prime },y^{\prime }))&=E(\tilde{R}_2(t,y)-\tilde{R}_2(t',y'))^2 \notag\\
&=\lambda c_a^2\int_0^{t_2}\left(\frac{\bar{%
F}(t+y-u)}{\nu(y)}-\frac{\bar{F}(t^{\prime }+y^{\prime }-u)}{\nu%
(y^{\prime })}\right)^2du+\lambda c_a^2\int_{t_2}^{t_1}\left(\frac{\bar{F}%
(t_1+y_1-u)}{\nu(y_1)}\right)^2du  \label{d R_2}
\end{align}
where again $t_1=t\vee t^{\prime }$, $t_2=t\wedge t^{\prime }$ and $y_1$, $%
y_2$ are the corresponding $y$ or $y^{\prime }$.

\begin{lemma}
We can compactify the space $S$ with the $d$-metric
defined in \eqref{d R_2}. \label{compactification2}
\end{lemma}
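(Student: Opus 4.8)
The plan is to mimic the proof of Lemma~\ref{compactification}: exhibit $([0,t_0]\times[0,\infty],d_2)$ as the continuous image of a genuinely compact space, namely $[0,t_0]\times[0,\pi/2]$ with the Euclidean metric, under the map $(i,\tan)$ where $i$ is the identity on the first coordinate. Since a continuous image of a compact set is compact, this shows the closure of $S$ (obtained by adjoining the points with $y=\infty$) is $d_2$-compact, i.e. $S$ admits the desired compactification.

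First I would extend $d_2$ to the points with $y=\infty$. Because $\{\tilde{R}_2(t,y)\}$ is a Gaussian family and $d_2$ is precisely the $L^2$-distance between its members, the natural extension adjoins the degenerate element $0$: set $d_2((t,y),(t',\infty))=E\tilde{R}_2(t,y)^2=\lambda c_a^2\int_0^t\bar{F}(t+y-u)^2\,du/\nu(y)^2$ for $y<\infty$ (independent of $t'$), and $d_2((t,\infty),(t',\infty))=0$. Symmetry and the triangle inequality then hold on all of $[0,t_0]\times[0,\infty]$ automatically, so $d_2$ remains a pseudo-metric.

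Next I would verify continuity of $(i,\tan)$. Away from $x=\pi/2$, if $(t,x)\to(t^*,x^*)$ with $x^*<\pi/2$, then $\tan x\to\tan x^*$; since $\bar{F}$ is bounded and a.e.\ continuous while $\nu(\cdot)$ is continuous and bounded away from $0$ on any bounded range of $y$, the explicit formula~\eqref{d R_2} together with dominated convergence gives $d_2((t,\tan x),(t^*,\tan x^*))\to0$. At $x^*=\pi/2$ I must show $d_2((t,\tan x),(t^*,\infty))=E\tilde{R}_2(t,\tan x)^2\to0$ as $x\to\pi/2$. Here I would use the light-tail bound: $\int_0^t\bar{F}(t+y-u)^2\,du=\int_y^{t+y}\bar{F}(v)^2\,dv\le\bar{F}(y)\int_y^\infty\bar{F}(v)\,dv=\bar{F}(y)G(y)^{2+\eta}$, so $E\tilde{R}_2(t,y)^2\le C\bar{F}(y)G(y)^\eta\to0$ as $y\to\infty$ by~\eqref{light-tail assumption} (cf.~\eqref{property}) and~\eqref{light tail}. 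This delivers continuity at $x=\pi/2$ and completes the argument.

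The only point requiring care—the step I would single out as the main (though still routine) obstacle—is the behaviour at $y=\infty$: one must check both that the extended $d_2$ is genuinely a pseudo-metric there and that $\tilde{R}_2(t,y)\to0$ in $L^2$ fast enough that the map is continuous into the compactified space. Both reduce to the single estimate $E\tilde{R}_2(t,y)^2\le C\bar{F}(y)G(y)^\eta$, which is exactly where the light-tail assumption enters; everything else is continuity of elementary functions on compact sets, just as in Lemma~\ref{compactification}.
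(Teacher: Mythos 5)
Your proposal is correct and follows essentially the same route as the paper: extend $d_2$ to $y=\infty$ by adjoining the degenerate zero element of the Gaussian family, show the map $(i,\tan)$ from the compact rectangle $[0,t_0]\times[0,\pi/2]$ is continuous into $([0,t_0]\times[0,\infty],d_2)$, and conclude compactness; the decisive estimate in both arguments is that $E\tilde{R}_2(t,y)^2\leq C\,G(y)^{\eta}\to 0$ as $y\to\infty$, which is where the tail decay of $\bar F$ enters. Your observation that the pseudo-metric axioms at $y=\infty$ are automatic from the $L^2$ interpretation is a slightly cleaner packaging of what the paper verifies by checking continuity of the extended $d_2$ (including the cross term via Cauchy--Schwarz), but it is not a genuinely different proof.
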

\bigskip

\begin{proof}
For $(t,y),(t',y')$ such that $y,y'\neq\infty$, write
\begin{eqnarray*}
&&d_2((t,y),(t',y'))\\
&=&\lambda c_a^2\Bigg(\frac{\int_0^{t_2}\bar{F}(t+y-u)^2du}{\nu(y)^2}+\frac{\int_0^{t_2}\bar{F}(t'+y'-u)^2du}{\nu(y')^2}-\frac{2\int_0^{t_2}\bar{F}(t+y-u)\bar{F}(t'+y'-u)du}{\nu(y)\nu(y')}{}\\
&&{}+\frac{\int_{t_2}^{t_1}\bar{F}(t_1+y_1-u)^2du}{\nu(y_1)^2}\Bigg)
\end{eqnarray*}
and define, for $y'=\infty$, that
$$d_2((t,y),(t',y'))=\int_0^t\frac{\bar{F}(t+y-u)^2}{\nu(y)^2}du$$
and $d_2((t,y),(t',y'))=0$ if both $y,y'=\infty$.

Then $d_2((t,y),(t',y'))$ is continuous at $y'=\infty$ since
$$\frac{\int_0^{t_2}\bar{F}(t'+y'-u)du}{\nu(y')^2}\leq\frac{\int_{y'}^{t_0+y'}\bar{F}(u)du}{\nu(y')^2}=G(y')^{\eta/(2+\eta)}\to0$$
and
\begin{align*}
\frac{\int_0^{t_2}\bar{F}(t+y-u)\bar{F}(t'+y'-u)du}{\nu(y)\nu(y')}&\leq\frac{\sqrt{\int_0^{t_2}\bar{F}(t+y-u)^2du\int_0^{t_2}\bar{F}(t'+y'-u)^2du}}{\nu(y)\nu(y')}\\
&\leq\sqrt{\frac{\int_y^{t_0+y}\bar{F}(u)du}{\nu(y)^2}}\cdot\sqrt{\frac{\int_{y'}^{t_0+y'}\bar{F}(u)du}{\nu(y')^2}}\\
&\leq G(y)^{\eta/(2(2+\eta))}G(y')^{\eta/(2(2+\eta))}\\
&\to0
\end{align*}
If $t'>t$, then
$$\frac{\int_t^{t'}\bar{F}(t'+y'-u)^2du}{\nu(y')^2}\leq\frac{\int_{y'}^{t_0+y'}\bar{F}(u)du}{\nu(y')^2}\leq G(y')^{\eta/(2+\eta)}\to0$$
Hence $d_2(\cdot,\cdot)$ is continuous at $y'=\infty$. The rest follows as in the proof of Lemma \ref{compactification}.
\end{proof}
\bigskip

\begin{lemma}
$E\sup_S\tilde{R}_2(t,y)<\infty$. In particular, $\tilde{R}_2(t,y)$ is a.s.
bounded over $S$. \label{entropy2}
\end{lemma}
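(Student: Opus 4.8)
The plan is to mimic, step for step, the entropy argument used for $\tilde R_1$ in the proof of Lemma \ref{entropy}, now applied to $\tilde R_2(t,y)=R_2(t,y)/\nu(y)$ equipped with the pseudo-metric $d_2$ of \eqref{d R_2}. By the entropy bound (see Adler (1990)),
\[
E\sup_S\tilde R_2(t,y)\le K\int_0^{\text{diam}(S)/2}H_2^{1/2}(\epsilon)\,d\epsilon ,
\]
where $H_2(\epsilon)=\log N_2(\epsilon)$ and $N_2(\epsilon)$ is the minimal number of $d_2$-balls of radius $\epsilon$ covering $S$. Everything reduces to showing $N_2(\epsilon)$ is polynomially bounded in $1/\epsilon$, since then $H_2(\epsilon)=O(\log(1/\epsilon))$ and $\int_0^C\sqrt{\log(1/\epsilon)}\,d\epsilon<\infty$.

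First I would estimate $d_2$ from \eqref{d R_2}. Writing (with $t_1=t\vee t'$, $t_2=t\wedge t'$ and $y_1$ the coordinate matching $t_1$) the integrand square as a sum of a term controlled by $\bar F(t+y-u)-\bar F(t'+y'-u)$ over $\nu(y)$ and a term controlled by $\bar F(t'+y'-u)^2\,(1/\nu(y)-1/\nu(y'))^2$, and using $\bar F^2\le\bar F$, one obtains an estimate of exactly the shape of \eqref{estimate}, namely $d_2((t,y),(t',y'))\le C\,(G(y)^{\eta/(2+\eta)}\vee G(y')^{\eta/(2+\eta)})$. This gives $\text{diam}(S)<\infty$, and, since $G$ is continuous and decreases to $0$, it shows that any two points with $y,y'>G^{-1}((\epsilon/C)^{(2+\eta)/\eta})$ are within $d_2$-distance $\epsilon$; hence the tail region in $y$ costs only a single ball.

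On the complementary bulk region I would combine a mean-value estimate for $1/\nu(\cdot)$ with $|\bar F(t+y-u)-\bar F(t'+y'-u)|\le C(|t-t'|+|y-y'|)$ to get a Lipschitz-type bound
\[
d_2((t,y),(t',y'))\le\frac{C}{G(y)^{\alpha}\wedge G(y')^{\alpha}}\bigl(|t-t'|+|y-y'|\vee|y-y'|^2\bigr)
\]
for a fixed exponent $\alpha=\alpha(\eta)>0$, so that on $\{y\le G^{-1}((\epsilon/C)^{(2+\eta)/\eta})\}$ one has $d_2\le C\epsilon^{-\alpha(2+\eta)/\eta}(|t-t'|+|y-y'|\vee|y-y'|^2)$. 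By the light-tail consequence \eqref{light tail}, $G(y)\le C/y^{1/p}$ for every $p>0$, hence $G^{-1}(\epsilon)\le C/\epsilon^{1/p}$, and the bulk region is covered by $N_2(\epsilon)=O(1/\epsilon^{c})$ balls of radius $\epsilon$ for some $c=c(\eta,p)$. Therefore $H_2(\epsilon)=O(\log(1/\epsilon))$, the entropy integral converges, and $E\sup_S\tilde R_2<\infty$; in particular $\tilde R_2$ is a.s. bounded on $S$.

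The main obstacle is, as in Lemma \ref{entropy}, controlling the geometry of $S$ under $d_2$ near $y=\infty$, where $\nu(y)\to0$ makes the normalization degenerate: one needs the numerator (here $\bar F$, even $\bar F^2$) to decay strictly faster than $\nu(y)^2$, which is precisely \eqref{property}, so that $d_2$ stays bounded and in fact small for large $y$, while the subpolynomial decay of $G$ from the light-tail assumption keeps the covering number polynomial. The remaining work — pinning down the exponents in the Lipschitz bound and checking that $d_2$ extends to a genuine pseudo-metric up to $y=\infty$ (already established in Lemma \ref{compactification2}) — is routine bookkeeping.
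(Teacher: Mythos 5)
Your proposal is correct and follows essentially the same route as the paper: the paper likewise derives a uniform bound $d_2\le C\,(G(y)^{\eta/(2+\eta)}\vee G(y')^{\eta/(2+\eta)})$ (its \eqref{estimate12}) plus a Lipschitz-type bound $d_2\le C\,G(y)^{-(3+\eta)/(2+\eta)}(|t-t'|+|y-y'|)$ via a Taylor expansion (its \eqref{estimate22}), and then concludes verbatim ``the rest follows as in the proof of Lemma \ref{entropy}'', i.e.\ the same two-regime covering argument with $G^{-1}(\epsilon)$ polynomial in $1/\epsilon$ and a convergent entropy integral. Your bookkeeping of the metric estimates differs only cosmetically from the paper's, so no gap to report.
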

\bigskip

\begin{proof}
From \eqref{d R_2} we have the estimate
\begin{eqnarray}
&&d_2((t,y),(t',y')) \notag\\
&\leq&2\lambda c_a^2\left(\int_0^t\left(\frac{\bar{F}(t+y-u)}{\nu(y)}\right)^2du\vee\int_0^{t'}\left(\frac{\bar{F}(t'+y'-u)}{\nu(y')}\right)^2du\right)+\lambda c_a^2\int_{t_1}^{t_2}\left(\frac{\bar{F}(t_1+y_1-u)}{\nu(y_1)}\right)^2du \notag\\
&\leq&2\lambda c_a^2(G(y)^{\eta/(2+\eta)}\vee G(y')^{\eta/(2+\eta)})+\lambda c_a^2G(y_1)^{\eta/(2+\eta)} \label{estimate12}
\end{eqnarray}
On the other hand, using multivariate Taylor series expansion,
\begin{eqnarray*}
&&\frac{\bar{F}(t+y-u)}{\nu(y)}-\frac{\bar{F}(t'+y'-u)}{\nu(y')}\\
&\leq&\sup_{t,y}\left|\frac{f(t+y-u)}{\nu(y)}\right||t-t'|+\sup_{t,y}\left|\frac{1}{2+\eta}\frac{\bar{F}(t+y-u)\bar{F}(y)}{G(y)^{1+1/(2+\eta)}}-\frac{f(y)}{G(y)^{1/(2+\eta)}}\right||y-y'|\\
&\leq&\frac{C}{G(y)^{(3+\eta)/(2+\eta)}}(|t-t'|+|y-y'|)
\end{eqnarray*}
and hence
\begin{equation}
d_2((t,y),(t',y'))\leq\frac{C}{G(y)^{(3+\eta)/(2+\eta)}}(|t-t'|+|y-y'|) \label{estimate22}
\end{equation}
where $C$ are constants not necessarily the same every time they appear. With \eqref{estimate12} and \eqref{estimate22}, the rest follows as in the proof of Lemma \ref{entropy}.
\end{proof}
\bigskip

\begin{lemma}
Borell-TIS inequality holds i.e. for $x\geq E\sup_S\tilde{R}_2(t,y)$,
\begin{equation*}
P\left(\sup_S\tilde{R}_2(t,y)\geq x\right)\leq\exp\left\{-\frac{1}{2\sigma_2^2}%
(x-E\sup_S\tilde{R}_2(t,y))^2\right\}
\end{equation*}
where
\begin{equation*}
\sigma_2^2=\sup_SE\tilde{R}_2(t,y)^2
\end{equation*}
\label{Borell2}
\end{lemma}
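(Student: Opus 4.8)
The plan is to replicate the structure of the proof of Lemma~\ref{Borell}: the Borell--TIS inequality for a centred Gaussian field (Adler (1990)) needs only that the field be almost surely bounded on the index set and that the supremum over the index set of its pointwise variance be finite, and both of these are now within reach for $\tilde R_2$.

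First I would record the pointwise variance. From \eqref{R_2} one has $R_2(0,y)=0$, so taking the comparison point in the metric formula \eqref{d R_2} to have time coordinate $0$ gives $E\tilde R_2(t,y)^2=\lambda c_a^2\int_0^t\big(\bar F(t+y-u)/\nu(y)\big)^2\,du$. Bounding $\int_0^t\bar F(t+y-u)^2\,du\le\int_y^{t+y}\bar F(u)\,du\le\int_y^\infty\bar F(u)\,du$ and inserting the definition of $\nu$ from \eqref{nu} gives, exactly as in the estimate \eqref{estimate12}, $E\tilde R_2(t,y)^2=O\big(G(y)^{\eta/(2+\eta)}\big)$, with $G$ as defined in the proof of Lemmas~\ref{CLT1} and \ref{limiting process}. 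Since $G(\cdot)$ is continuous and non-increasing on $[0,\infty]$ with $G(0)=(EV)^{1/(2+\eta)}<\infty$, this bound is uniform in $(t,y)\in S$, whence $\sigma_2^2=\sup_S E\tilde R_2(t,y)^2\le C<\infty$.

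Second, Lemma~\ref{entropy2} already supplies $E\sup_S\tilde R_2(t,y)<\infty$, and in particular that $\tilde R_2$ is a.s.\ bounded on $S$. With a.s.\ boundedness and $\sigma_2^2<\infty$ established, the Borell--TIS inequality applies directly to the centred Gaussian field $\tilde R_2$ on $S$ and yields the stated estimate. I do not expect a genuine obstacle: the substantive work --- compactifying $S$ under $d_2$ (Lemma~\ref{compactification2}) and the entropy/chaining bound on $E\sup_S\tilde R_2$ (Lemma~\ref{entropy2}) --- has already been carried out, and the present lemma merely packages those facts into the concentration statement. The one point worth a sentence of care is that $\sigma_2^2$ in the Borell--TIS bound is the \emph{supremum} of the variance over $S$, not merely the variance at a fixed point; that finiteness is exactly what the $G(y)$-uniform bound above secures.
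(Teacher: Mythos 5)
Your argument is correct and matches the paper's proof: the paper likewise bounds $E\tilde{R}_2(t,y)^2\leq\lambda c_a^2\int_y^{t+y}\bar{F}(u)du/\nu(y)^2\leq\lambda c_a^2G(y)^{\eta/(2+\eta)}$ uniformly over $S$, and then appeals to the a.s.\ boundedness from Lemma \ref{entropy2} exactly as in Lemma \ref{Borell} to invoke Borell--TIS. No substantive difference.
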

\bigskip

\begin{proof}
Note that
$$E\tilde{R}_2(t,y)^2=\frac{\lambda c_a^2\int_0^t\bar{F}(t+y-u)^2du}{\nu(y)^2}\leq\frac{\lambda c_a^2\int_y^{t+y}\bar{F}(u)du}{G(y)^{2/(2+\eta)}}\leq\lambda c_a^2G(y)^{\eta/(2+\eta)}$$
The rest follows as in the proof of Lemma \ref{Borell}.
\end{proof}
\bigskip

Lemma \ref{limiting process} is now an immediate corollary of Lemma \ref{Borell} and \ref{Borell2}:

\begin{proof}[Proof of Lemma \ref{limiting process}]
\begin{eqnarray*}
&&P(|R(t,y)|\leq C_*\nu(y)\text{\ for all\ }t\in[0,t_0],\ y\in[0,\infty))\\
&\geq&P\left(\sup_S|\tilde{R}_1(t,y)|+\sup_S|\tilde{R}_2(t,y)|\leq C_*\right) \\
&\geq&P\left(\sup_S|\tilde{R}_1(t,y)|\leq\frac{C_*}{2}\right)P\left(\sup_S|%
\tilde{R}_2(t,y)|\leq\frac{C_*}{2}\right) \\
&>&0
\end{eqnarray*}
when $C_*$ is large enough, by the independence of $\tilde{R}_1(\cdot,\cdot)$ and $\tilde{R}_2(\cdot,\cdot)$ in the
second inequality.
\end{proof}
\bigskip

With Lemma \ref{limiting process}, we now prove Lemma \ref{CLT1}.

\begin{proof}[Proof of Lemma \ref{CLT1}]
First consider \eqref{CLT2}. Take $C_1=3C_*$ where $C_*$ is the constant in Lemma \ref{limiting process}. We have
\begin{eqnarray}
&&P\left(\bar{Q}^\infty(t,y)\in\left(\lambda s\int_y^{t+y}\bar{F}(u)du\pm\sqrt{s}C_1\nu(y)\right)\text{\ for all\ }t\in[0,t_0],\ y\in[0,\infty)\Bigg|B(0)\right) \nonumber\\
&\geq&P\Bigg(U_0\leq x,\ 0\in\left(\lambda s\int_y^{t+y}\bar{F}(u)du\pm\sqrt{s}C_1\nu(y)\right)\text{\ for\ }t\in[0,U_0],\ y\in[0,\infty),{} \nonumber\\
&&{}\bar{Q}^\infty(t,y)\in\left(\lambda s\int_y^{t+y}\bar{F}(u)du\pm\sqrt{s}C_1\nu(y)\right)\text{\ for all\ }t\in[U_0,t_0],\ y\in[0,\infty)\Bigg|B(0)\Bigg) \label{intermediate}
\end{eqnarray}
Letting $x=1/(\lambda s)$, we will show that $0\in\left(\lambda s\int_y^{t+y}\bar{F}(u)du\pm\sqrt{s}C_1\nu(y)\right)$ for $t\in[0,U_0]$ and $y\in[0,\infty)$ in the expression is redundant. In fact, let $m(s)=\inf\left\{\sqrt{s}C_*\nu(y)<\frac{1}{2}\right\}$. When $y=m(s)$, $\lambda s\int_y^{t+y}\bar{F}(u)du$ is less than 1 for large enough $s$, and when $y\geq m(s)$ it decays faster than $\sqrt{s}C_1\nu(y)<\frac{1}{2}$ (see Remark 1 in the paper for similar argument). Hence $\left(\lambda s\int_y^{t+y}\bar{F}(u)du\pm\sqrt{s}C_1\nu(y)\right)$ contains 0 when $y\geq m(s)$. When $y<m(s)$, the choice of $x$ gives
$$\lambda s\int_y^{t+y}\bar{F}(u)du\leq\lambda st\bar{F}(y)\leq\lambda sx=1$$
for $t\in[0,U_0]$ and $U_0\leq x$. Hence $\left(\lambda s\int_y^{t+y}\bar{F}(u)du\pm\sqrt{s}C_1\nu(y)\right)$ also contains 0 when $y<m(s)$.

In fact with the same choice of $x$, by similar argument we have $\left(\lambda s\int_y^{t+y}\bar{F}(u)du\pm\sqrt{s}C_*\nu(y)\right)$ contains only 0 for $t\in[0,U_0]$ and $y\geq m(s)$, and that
$0\in\left(\lambda s\int_y^{t+U_0+y}\bar{F}(u)du\pm\sqrt{s}C_1\nu(y)\right)$ for $t\in[0,U_0]$ and $y\geq m(s)$. This will be useful later on in the proof.

The same choice of $x$, together with the fact that $\bar{F}(\cdot)$ is decreasing, also guarantees that
\begin{equation}
\lambda s\int_{t+y}^{t+U_0+y}\bar{F}(u)du\leq 2C_*\sqrt{s}\nu(y) \label{U_0 estimate}
\end{equation}
In fact, when $y=m(s)$, $\lambda s\int_{t+y}^{t+U_0+y}\bar{F}(u)du$ is less than 1 when $s$ is large enough, and when $y\geq m(s)$ it decays faster than $2C_*\sqrt{s}\nu(y)$. Hence the inequality \eqref{U_0 estimate} when $y\geq m(s)$. When $y<m(s)$ the fact that $U_0\leq x$ leads to $\lambda s\int_{t+y}^{t+U_0+y}\bar{F}(u)du\leq1$, hence the conclusion. Again this will be useful later on.

Hence \eqref{intermediate} is greater than or equal to
$$P(U_0\leq x|B(0))P\left(\bar{Q}_0^\infty(t,y)\in\left(\lambda s\int_y^{t+U_0+y}\bar{F}(u)du\pm\sqrt{s}C\tilde{C}(y)\right)\text{\ for all\ }t\in[0,t_0]\Bigg|U_0\leq x\right)$$
where $\bar{Q}_0^\infty(t,y)$ is independent of $U_0$ and has the same distribution as $\bar{Q}^\infty(t,y)$ with initial age 0 and no initial customers.

For any $U_0\leq x$, we have
\begin{eqnarray*}
&&P\left(\bar{Q}_0^\infty(t,y)\in\left(\lambda s\int_y^{t+U_0+y}\bar{F}(u)du\pm\sqrt{s}C_1\nu(y)\right)\text{\ for all\ }t\in[0,t_0],\ y\in[0,\infty)\right)\\
&\geq&P\Bigg(\bar{Q}_0^\infty(t,y)\in\left(\lambda s\int_y^{t+U_0+y}\bar{F}(u)du\pm\sqrt{s}C_1\nu(y)\right)\text{\ for all\ }t\in[0,t_0],\ y\in[0,m(s)){}\\
&&{}\bar{Q}_0^\infty(t,y)\in\left(\lambda s\int_y^{t+y}\bar{F}(u)du\pm\sqrt{s}C_*\nu(y)\right)\text{\ for all\ }t\in[0,t_0],\ y\in[m(s),\infty)\Bigg)\\
&&{}\ \ \text{(since the interval $\left(\lambda s\int_y^{t+y}\bar{F}(u)du\pm\sqrt{s}C_*\nu(y)\right)$ only contains 0 while }{}\\
&&{}\ \ \text{$0\in\left(\lambda s\int_y^{t+U_0+y}\bar{F}(u)du\pm\sqrt{s}C_1\nu(y)\right)$ when $y>m(s)$ as discussed above)}\\
&\geq&P\Bigg(\sup_{y\in[0,m(s))}\left|\frac{\bar{Q}_0^\infty(t,y)-\lambda s\int_y^{t+y}\bar{F}(u)du}{\sqrt{s}}\right|+\sup_{y\in[0,m(s))}\lambda\sqrt{s}\int_{t+y}^{t+U_0+y}\bar{F}(u)du\leq C_1\nu(y),{}\\
&&{}\bar{Q}_0^\infty(t,y)\in\left(\lambda s\int_y^{t+y}\bar{F}(u)du\pm\sqrt{s}C_*\nu(y)\right)\text{\ for all\ }t\in[0,t_0],\ y\in[m(s),\infty)\Bigg)\\
&\geq&P\left(\left|\frac{\bar{Q}_0^\infty(t,y)-\lambda s\int_y^{t+y}\bar{F}(u)du}{\sqrt{s}}\right|\leq C_*\nu(y)\text{\ for all\ }t\in[0,t_0],\ y\in[0,\infty)\right){}\\
&&{}\ \ \text{(by \eqref{U_0 estimate})}\\
&\to&P(|R(t,y)|\leq C_*\nu(y)\text{\ for all\ }t\in[0,t_0],\ y\in[0,\infty))>0
\end{eqnarray*}
by Lemma \ref{limiting process}. The convergence follows from Functional Central Limit Theorem (see Pang and Whitt (2009)) and that the set $\{f:|f(t,y)|\leq C_*\nu(y)\text{\ for all\ }t\in[0,t_0],\ y\in[0,\infty)\}$ is a continuity set.

Lastly, since $U^0$ is light-tailed, by the argument following \eqref{d p} in the proof of Proposition \ref{asymptotic}, we have
$$\inf_{b\geq0}P\left(U_0\leq\frac{1}{\lambda s}\bigg|B(0)=b\right)=\inf_{b\geq0}P\left(U^0-b\leq\frac{1}{\lambda}\bigg|U^0>b\right)\geq1-e^{-c/\lambda}>0$$
for some constant $c>0$. Hence \eqref{CLT2} holds. Inequality \eqref{CLT3} is obvious since one can isolate any point inside $S$ and the projection of the
process on the point will possess Gaussian distribution. For example, we can write
\begin{eqnarray*}
&&P\left(\bar{Q}^\infty(t,y)\notin\left(\lambda s\int_y^{t+y}\bar{F}(u)du\pm\sqrt{s}%
C_1\nu(y)\right)\text{\ for some\ }t\in[0,t_0%
],\ y\in[0,\infty)\Bigg|B(0)\right)\\
&\geq&P(U_0\leq x)P\left(\bar{Q}_0^\infty(t^*,y^*)\geq\lambda s\int_{y^*}^{t^*+x+y^*}\bar{F}(u)du+\sqrt{s}C_1\nu(y^*)\right)\\
&>&0
\end{eqnarray*}
for any $t^*\in[0,t_0]$ and $y^*\in[0,\infty)$.
\end{proof}
\bigskip

%\small

\end{document}